\numberwithin{equation}{section}
\numberwithin{figure}{section}
 \theoremstyle{definition}
 \newtheorem*{defn*}{\protect\definitionname}
\theoremstyle{plain}
\newtheorem{thm}{\protect\theoremname}[section]
  \theoremstyle{plain}
  \newtheorem{prop}[thm]{\protect\propositionname}
  \theoremstyle{plain}
  \newtheorem{conjecture}[thm]{\protect\conjecturename}
  \theoremstyle{plain}
  \newtheorem{cor}[thm]{\protect\corollaryname}
  \theoremstyle{plain}
  \newtheorem{lem}[thm]{\protect\lemmaname}
  \theoremstyle{remark}
  \newtheorem{rem}[thm]{\protect\remarkname}
  \theoremstyle{definition}
  \newtheorem{defn}[thm]{\protect\definitionname}
  \providecommand{\conjecturename}{Conjecture}
  \providecommand{\corollaryname}{Corollary}
  \providecommand{\definitionname}{Definition}
  \providecommand{\lemmaname}{Lemma}
  \providecommand{\propositionname}{Proposition}
  \providecommand{\remarkname}{Remark}
\providecommand{\theoremname}{Theorem}
\begin{document}

\title{Cutoff on Hyperbolic Surfaces}

\author{Konstantin Golubev and Amitay Kamber}

\thanks{Konstantin Golubev, Bar-Ilan University and Weizmann Institute of
Science, k.golubev@gmail.com\\
Amitay Kamber, Einstein Institute of Mathematics, The Hebrew University
of Jerusalem, amitay.kamber@mail.huji.ac.il}
\begin{abstract}
In this paper we study the common distance between points and the
behavior of a constant length step discrete random walk on finite
area hyperbolic surfaces. We show that if the second smallest eigenvalue
of the Laplacian is at least $1/4$, then the distances on the surface
are highly concentrated around the minimal possible value, and that
the discrete random walk exhibits cutoff. This extends the results
of Lubetzky and Peres (\cite{lubetzky2015cutoff}) from the setting
of Ramanujan graphs to the setting of hyperbolic surfaces. By utilizing
density theorems of exceptional eigenvalues from \cite{sarnak1991bounds},
we are able to show that the results apply to congruence subgroups
of $SL_{2}\left(\mathbb{Z}\right)$ and other arithmetic lattices,
without relying on the well known conjecture of Selberg (\cite{selberg1965estimation}).

Conceptually, we show the close relation between the cutoff phenomenon
and temperedness of representations of algebraic groups over local
fields, partly answering a question of Diaconis (\cite{diaconis1996cutoff}),
who asked under what general phenomena cutoff exists.
\end{abstract}

\maketitle
\global\long\def\C{\mathbb{C}}
\global\long\def\R{\mathbb{R}}
\global\long\def\Q{\mathbb{Q}}
\global\long\def\N{\mathbb{N}}
\global\long\def\Z{\mathbb{Z}}
\global\long\def\O{\mathcal{O}}
\global\long\def\H{\mathbb{H}}

\global\long\def\n#1{\left\Vert #1\right\Vert _{1}}
\global\long\def\nn#1{\left\Vert #1\right\Vert _{2}}

\section{Introduction}

Let $\H$ be the hyperbolic plane equipped with the standard metric
$d$ and the standard measure $\mu$. Let $\Gamma\subset PSL_{2}(\R)$
be a lattice and let $X=\Gamma\backslash\H$ the the quotient space,
which is a hyperbolic surface if $\text{\ensuremath{\Gamma}}$ is
torsion-free, and an orbifold in general. The measure $\mu$ descends
to a finite measure on $X$, and let $d_{X}:X\times X\to\R_{\ge0}$
be the induced distance on $X$. The injectivity radius of a point
$x_{0}\in X$ is $\frac{1}{2}\inf_{1\ne\gamma\in\Gamma}d\left(\tilde{x}_{0},\gamma\tilde{x}_{0}\right)$,
where $\tilde{x}_{0}\in\H$ is a lift of $x_{0}$ to $\H$. Denote
$R_{X}=\text{acosh}\left(\mu\left(X\right)/2\pi+1\right)$. This is
the radius of the hyperbolic ball whose volume equals the volume $\mu\left(X\right)$
of $X$. 
\begin{defn*}
We say that $X=\Gamma\backslash\H$ is Ramanujan\footnote{It seems that the notion of a Ramanujan surface (or more generally,
a Ramanuajan manifold or a Ramanujan orbifold) does not appear in
literature, but is natural given the standard notions of a Ramanujan
graph (\cite{lubotzky1988ramanujan}) and a Ramanujan complex (\cite{lubotzky2005ramanujan}).} if the non-trivial spectrum of the Laplacian on $L^{2}\left(X\right)$
is bounded from below by $1/4$. Equivalently, every non-trivial subrepresentation
of $G$ on $L^{2}\left(\Gamma\backslash G\right)$ with $K=PSO_{2}\left(\mathbb{R}\right)$-fixed
vectors is tempered. 
\end{defn*}
We write $C=C(t)$ if $C$ is a constant depending only on $t$. We
write $a\ll_{t}b$ if there is $C=C(t)$ such that $a\le C\cdot b$
holds, and $a\asymp_{t}b$ if both $a\ll_{t}b$ and $b\ll_{t}a$ take
place.

\subsection*{Common Distance}
\begin{thm}
\label{thm1}Let $\Gamma\subset PSL_{2}(\R)$ be a lattice, $X=\Gamma\backslash\H$
, and assume $R_{X}=\text{acosh}\left(\mu\left(X\right)/2\pi+1\right)\ge1$.
Then for a point $x_{0}\in X$ and for all $\gamma>0$, the following
inequality holds
\[
\mu\left(x\in X:d_{X}\left(x_{0},x\right)\le R_{X}-\gamma\ln\left(R_{X}\right)\right)/\mu\left(X\right)\ll R_{X}^{-\gamma}.
\]
If $X$ is Ramanujan and $x_{0}\in X$ has injectivity radius at least
$r_{0}$, then for all $\gamma>0$, the following inequality holds
\[
\mu\left(x\in X:d_{X}\left(x_{0},x\right)\ge R_{X}+\gamma\ln\left(R_{X}\right)\right)/\mu\left(X\right)\ll_{r_{0}}\left(1+\gamma^{2}\right)R_{X}^{2-\gamma}.
\]
\end{thm}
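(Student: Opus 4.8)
We indicate how to prove the second estimate of Theorem~\ref{thm1} (the first follows from a direct volume comparison, using that the projection $\H\to X$ does not increase measure).

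The plan is to bound the measure of the far set by a lattice‑point count together with a second‑moment argument. Fix a lift $\tilde{x}_{0}\in\H$ of $x_{0}$ and, for $R>0$, set
\[
N_{R}(x)=\#\left\{ \gamma\in\Gamma:\ d\left(\tilde{x}_{0},\gamma\tilde{x}\right)\le R\right\} ,
\]
which descends to a well‑defined measurable function on $X$ satisfying $N_{R}(x)=0$ precisely when $d_{X}(x_{0},x)>R$. The level set $\left\{ x:d_{X}(x_{0},x)=R\right\} $ lies in a countable union of circles, hence is $\mu$‑null, so by Chebyshev's inequality
\[
\frac{\mu\left(x\in X:d_{X}(x_{0},x)\ge R\right)}{\mu(X)}=\frac{\mu\left(N_{R}=0\right)}{\mu(X)}\le\frac{\left\Vert N_{R}-M\right\Vert _{L^{2}(X)}^{2}}{M^{2}\,\mu(X)},\qquad M:=\frac{1}{\mu(X)}\int_{X}N_{R}\,d\mu .
\]
Unfolding over a fundamental domain gives $M=\mu\left(B_{\H}(\tilde{x}_{0},R)\right)/\mu(X)=\left(\cosh R-1\right)/\left(\cosh R_{X}-1\right)$, using $\mu(X)=2\pi\left(\cosh R_{X}-1\right)$. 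Choosing $R=R_{X}+\gamma\ln R_{X}$ and using $R_{X}\ge1$, we have $\mu(X)\asymp e^{R_{X}}$, $e^{R}=e^{R_{X}}R_{X}^{\gamma}$, $M\asymp R_{X}^{\gamma}$ and $(1+R)^{2}\ll(1+\gamma^{2})R_{X}^{2}$, so the theorem reduces to proving
\[
\left\Vert N_{R}-M\right\Vert _{L^{2}(X)}^{2}\ll_{r_{0}}(1+R)^{2}\,e^{R}.
\]

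Next I would pass to the spectral side. The function $N_{R}$ equals $K_{R}(x_{0},\cdot)$, where $K_{R}(z,w)=\sum_{\gamma\in\Gamma}\mathbf{1}_{[0,R]}\!\left(d(z,\gamma w)\right)$ is the automorphic kernel of the operator ``integrate over the ball of radius $R$'' on $L^{2}(X)$; its spherical (Selberg--Harish-Chandra) transform is $h_{R}(\tfrac14+t^{2})=\int_{B_{\H}(o,R)}\varphi_{t}\left(d(o,y)\right)\,d\mu(y)$, where $\varphi_{t}$ is the spherical function, with $h_{R}(0)=\mu\left(B_{\H}(o,R)\right)$. Expanding $K_{R}(x_{0},\cdot)$ along the discrete spectrum $\{u_{j}\}_{j\ge0}$ together with, when $X$ is non‑compact, the continuous spectrum (Eisenstein series), the $\lambda_{0}=0$ term is exactly the constant $M$, so Parseval on $L^{2}(X)$ yields
\[
\left\Vert N_{R}-M\right\Vert _{L^{2}(X)}^{2}=\sum_{j\ge1}\left|h_{R}(\lambda_{j})\right|^{2}\left|u_{j}(x_{0})\right|^{2}\ +\ (\text{continuous-spectrum contribution}).
\]

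The proof then rests on two estimates. First, because $X$ is Ramanujan, every $\lambda_{j}$ with $j\ge1$ --- and every spectral parameter occurring in the continuous spectrum --- is of the form $\tfrac14+t^{2}$ with $t\in\R$; for such tempered parameters one has $\left|\varphi_{t}(\rho)\right|\ll(1+\rho)\,e^{-\rho/2}$ and $\left|\varphi_{t}'(\rho)\right|\ll(1+\rho)(1+|t|)^{1/2}e^{-\rho/2}$ for $\rho\ge1$, and integrating the radial Laplace equation for $\varphi_{t}$ gives the identity $h_{R}(\tfrac14+t^{2})=-2\pi\,(\sinh R)\,\varphi_{t}'(R)/(\tfrac14+t^{2})$; these combine to give, uniformly for $t\in\R$ and $R\ge1$,
\[
\left|h_{R}(\tfrac14+t^{2})\right|\ll(1+R)\,e^{R/2}\,(1+|t|)^{-3/2}.
\]
Second, a local Weyl law: via the pre‑trace formula applied to a test function concentrated at scale $\min(r_{0},T^{-1})$ (a heat kernel does the job, the injectivity‑radius hypothesis ensuring that only the identity term contributes up to a negligible error), one gets $\sum_{|t_{j}|\le T}\left|u_{j}(x_{0})\right|^{2}+(\text{continuous analogue})\ll_{r_{0}}1+T^{2}$ for all $T>0$, and hence by partial summation $\sum_{j\ge1}(1+|t_{j}|)^{-3}\left|u_{j}(x_{0})\right|^{2}+(\text{continuous analogue})\ll_{r_{0}}1$. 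Feeding the two estimates into the Parseval identity --- the continuous‑spectrum term being handled identically, since its parameters automatically lie on the critical line --- bounds $\left\Vert N_{R}-M\right\Vert _{L^{2}(X)}^{2}$ by $\ll_{r_{0}}(1+R)^{2}e^{R}$, completing the reduction. This is precisely where the Ramanujan hypothesis is used: a single exceptional eigenvalue $\lambda=s(1-s)$ with $s\in(\tfrac12,1)$ would give $\left|\varphi_{s}(\rho)\right|\asymp e^{(s-1)\rho}\gg e^{-\rho/2}$ and $\left|h_{R}\right|\asymp e^{sR}$, destroying the gain over the main term.

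The step I expect to be the main obstacle is the local Weyl estimate with explicit dependence on $r_{0}$: one must choose the pre‑trace test function so that its spherical transform is nonnegative and bounded below on $[-T,T]$ while being concentrated in a ball of radius $\lesssim r_{0}$, and then track how the constant degrades as $r_{0}\to0$. Alongside this there is the routine but technical verification that the spectral expansion of $K_{R}$ --- whose point‑pair kernel is only the indicator of a ball, hence not smooth --- and the use of Parseval are legitimate in $L^{2}(X)$ (the $(1+|t|)^{-3/2}$ decay of $h_{R}$ suffices; alternatively one may replace $\mathbf{1}_{[0,R]}$ by a smooth cutoff between $B_{\H}(o,R-1)$ and $B_{\H}(o,R)$ with no essential loss), together with the handling of the continuous spectrum in the non‑compact case through truncated Eisenstein series. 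The spherical‑function asymptotics underlying the first estimate are classical.
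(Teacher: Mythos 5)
Your argument is correct in outline and yields the stated exponents, but it takes a genuinely different route from the paper. The paper avoids the counting function, the pre-trace formula and the local Weyl law altogether: it applies the sphere-averaging operator $A_{r'}$ with $r'=R_X+\gamma\ln R_X-r_0$ to the normalized indicator $b_{x_0,r_0}$ of the radius-$r_0$ ball around $x_0$, notes that every point at distance $\ge R_X+\gamma\ln R_X$ from $x_0$ lies in the zero set of $A_{r'}b_{x_0,r_0}$, where $\left|A_{r'}b_{x_0,r_0}-\pi\right|=1/\mu(X)$, and then uses only the operator-norm bound $\left(r'+1\right)e^{-r'/2}$ for $A_{r'}$ on $L_0^2(X)$ (Proposition \ref{prop:bounds on sepctrum}) together with $\left\Vert b_{x_0,r_0}\right\Vert_2=\mu\left(B_{r_0}\right)^{-1/2}$; the $r_0$-dependence enters only through this norm. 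Your second-moment argument is instead the classical hyperbolic lattice-point strategy: Chebyshev for $N_R$, Parseval along the spectral decomposition, the bound $\left|h_R\left(\tfrac14+t^2\right)\right|\ll(1+R)e^{R/2}(1+|t|)^{-3/2}$ via the identity $h_R=-2\pi\sinh R\,\varphi_t'(R)/\left(\tfrac14+t^2\right)$, and a local Weyl law with constants depending only on $r_0$. These steps are sound — since $N_R\in L^2(X)$ is bounded with compact support, Parseval and the unfolding $\left\langle N_R,u_j\right\rangle=h_R(t_j)\overline{u_j(x_0)}$ need no smoothing, and the $r_0$-uniformity of the Weyl law does hold because the number of $\gamma$ with $d\left(\tilde x_0,\gamma\tilde x_0\right)\le D$ is $\ll_{r_0}e^{D}$ independently of $X$ — but they cost noticeably more machinery (uniform derivative bounds for spherical functions, Eisenstein contributions, the pre-trace formula) than the paper's short $L^2$ manipulation. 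What your route buys is finer output: a variance bound for the counting function $N_R$ itself, in the spirit of lattice-point counting; what the paper's route buys is softness and portability — the same three-line argument transfers verbatim to the $L^p$ setting of Theorem \ref{thm:Lp-expanders} and to the covering results of Theorem \ref{thm:Normal Coverings Theorem}, where one decomposes $b_{x_0,r_0}$ spectrally rather than the kernel.
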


In other words, for a point $x_{0}$ on a Ramanujan surface $X$,
the distance from it to almost every other point is approximately
$R_{X}$, with the window of size $\left(2+\epsilon\right)\ln\left(R_{X}\right)$.
We emphasize that the constants in the theorem do not depend on the
surface, and hence the result is interesting for a sequence of Ramanujan
quotients with volume increasing to infinity, which is not known to
exist. However, the well known conjecture of Selberg asserts that
the quotients defined by the congruence subgroups of $SL_{2}\left(\mathbb{Z}\right)$
form a sequence of such quotients (see \cite{selberg1965estimation,sarnak1995selberg}
and also Theorem \ref{thm:SL2Z} below). Alternatively, one may conjecture
that as in case of graphs, a ``random'' surface is almost Ramanujan
with a proper choice of the random model (see Conjecture \ref{conj:comb conjectures}
below).

\subsection*{Cutoff of Random Walks}

In the second result, we consider the speed of convergence in the
$L^{1}$-norm of two different random walks on $X$. The first one
is the hyperbolic Brownian motion on $X$, which we consider as an
operator $B_{t}:C(X)\to C(X)$ for $t\in\mathbb{R}_{\ge0}$, where
$C\left(X\right)$ is the space of continuous functions on $X$. The
second one is the discrete time random walk with step of a fixed length,
i.e., at each step the walker rotates at a uniformly chosen angle
and makes a step of some fixed length $r_{1}>0$. The corresponding
operator $A_{r_{1}}:C(X)\to C(X)$ is the distance $r_{1}$ averaging
operator. By duality, we consider both random walks as acting on measures
on $X$.

Specifically, for a point $x_{0}\in X$ consider the continuous time
random walk $B_{t}\delta_{x_{0}}$, and the discrete time random walk
$A_{r_{1}}^{k}\delta_{x_{0}}$, both considered as measures on $X$.
One can show that the measures defined by the two random walks, for
$t\gg0$ or $k\gg0$, are defined by some $L^{1}$-functions, which
converge in the $L^{1}$-norm to the constant function $\pi$ on $X$
normalized as $\pi\left(x\right)=\mu\left(X\right)^{-1}$ for all
$x\in X$. The following theorem gives an exact estimate on the rate
of convergence for points with injectivity radius bounded away from
$0$. 
\begin{thm}
\label{thm2}Fix $0<r_{0}$, $0<r_{1}$, assume that $X=\Gamma\backslash\H$
is Ramanujan, and let $x_{0}\in X$ be a point with injectivity radius
at least $r_{0}$.
\begin{enumerate}
\item There exist constants $c=c(r_{1})>0$, and $C=C\left(r_{0},r_{1}\right)$,
such that
\begin{enumerate}
\item If $k$ satisfies $k\alpha r_{1}<R_{X}-\lambda\sqrt{R_{X}}$ then
$\n{A_{r_{1}}^{k}\delta_{x_{0}}-\pi}>2-Ce^{-c\lambda^{2}}$;
\item If $k$ satisfies $k\alpha r_{1}>R_{X}+\lambda\sqrt{R_{X}}$ then
$\n{A_{r}^{k}\delta_{x_{0}}-\pi}<Ce^{-c\lambda^{2}}$;
\end{enumerate}
for every $\lambda>0$, assuming $R_{X}\gg_{r_{0},r_{1},\lambda}1$
, and where $\alpha=\frac{1}{\pi r_{1}}\intop_{0}^{\pi}\ln\left(e^{r_{1}}\cos^{2}\theta+e^{-r_{1}}\sin^{2}\theta\right)d\theta\in(0,1)$.
\item There exist constants $c>0$, $C=C(r_{0})$ such that
\begin{enumerate}
\item If $t$ satisfies $t<R_{X}-\lambda\sqrt{R_{X}}$ then $\n{B_{t}\delta_{x_{0}}-\pi}>2-Ce^{-c\lambda^{2}}$;
\item If $t$ satisfies $t>R_{X}+\lambda\sqrt{R_{X}}$ then$\n{B_{t}\delta_{x_{0}}-\pi}<Ce^{-c\lambda^{2}}$;
\end{enumerate}
for every $\lambda>0$, assuming $R_{X}\gg_{r_{0},\lambda}1$ .
\end{enumerate}
\end{thm}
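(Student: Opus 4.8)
The plan is to prove both statements by the same two-sided scheme — a geometric lower bound showing the walk is not yet mixed, and a spectral upper bound showing it is — treating $A_{r_1}$ and $B_t$ in parallel, the only difference being the time-scale, whose "speed" is $\alpha r_1$ for the discrete walk and $1$ for hyperbolic Brownian motion. The two geometric inputs are: (a) the projection $\H\to X$ is distance-decreasing, so the measures $A_{r_1}^{k}\delta_{x_0},B_t\delta_{x_0}$ on $X$ are pushforwards of their counterparts on $\H$; and (b) on $\H$ the radial coordinate concentrates sharply. For $B_t$ this is the classical fact that hyperbolic Brownian motion escapes at unit speed with Gaussian fluctuations of order $\sqrt t$. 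For $A_{r_1}$, one writes the radial coordinate $\rho_k=d(o,Y_k)$ as a process with increments bounded by $r_1$ which, by the hyperbolic law of cosines, equal $\ln(e^{r_1}\cos^{2}\theta_k+e^{-r_1}\sin^{2}\theta_k)$ up to corrections that are exponentially small in $\rho_k$ (hence summably small, since $\rho_k$ spends only $O(1)$ time below any fixed level); with $\theta_k$ uniform these main terms are i.i.d.\ of mean exactly $\alpha r_1$, so Hoeffding/Azuma (equivalently, Le Page's central limit theorem for the associated $SL_2(\R)$ random matrix product, whose top Lyapunov exponent is $\tfrac12 r_1\alpha$) gives $\mathbb P(|\rho_k-k\alpha r_1|>u)\ll e^{-cu^2/k}$ for $u\ll k$.

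\textbf{Lower bounds.} Put the threshold just below the cutoff, say $\rho_\ast=k\alpha r_1+\tfrac12\lambda\sqrt{R_X}$ (resp.\ $t+\tfrac12\lambda\sqrt{R_X}$). By (a) and (b), $A_{r_1}^{k}\delta_{x_0}$ (resp.\ $B_t\delta_{x_0}$) assigns mass $\ge1-Ce^{-c\lambda^{2}}$ to $\{x:d_X(x_0,x)\le\rho_\ast\}$, with $c=c(r_1)$ arising from the concentration and from $\sqrt k\asymp_{r_1}\sqrt{R_X}$. Under $k\alpha r_1<R_X-\lambda\sqrt{R_X}$ (resp.\ $t<R_X-\lambda\sqrt{R_X}$) and $R_X\gg_{r_0,r_1,\lambda}1$ we get $\rho_\ast<R_X-\tfrac12\lambda\sqrt{R_X}$, so this ball has volume $\le2\pi(\cosh\rho_\ast-1)\le\mu(X)e^{\rho_\ast-R_X}\le\mu(X)e^{-c\lambda^{2}}$; hence $\pi$ of it is $\le e^{-c\lambda^{2}}$, and comparing the two measures on it yields $\n{A_{r_1}^{k}\delta_{x_0}-\pi}\ge2-Ce^{-c\lambda^{2}}$ (resp.\ for $B_t$).

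\textbf{Upper bounds (the main point).} The naive route $\n{f-\pi}\le\mu(X)^{1/2}\nn{f-\pi}$ together with the spectral decomposition gives, via $\nn{A_{r_1}^{k}\delta_{x_0}-\pi}^{2}=\sum_{j\ge1}|\varphi_{s_j}(r_1)|^{2k}|\phi_j(x_0)|^{2}+(\text{continuous spectrum})$ and temperedness, only mixing at a time a constant factor larger than the claimed cutoff location (the factor is $2$ for $B_t$ and for small $r_1$, tending to $1$ as $r_1\to\infty$), because $\nn{f-\pi}^{2}$ equals the density of $A_{r_1}^{2k}\delta_{x_0}$ at $x_0$ minus $\pi(x_0)$, governed by the time-$2k$ walk and by the persistent spike of $f$ at $x_0$ rather than by $L^{1}$-coverage of $X$ — precisely the obstruction that Lubetzky--Peres circumvent for Ramanujan graphs. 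To recover the sharp time we expand over hyperbolic \emph{ball} averages. Writing $A_{r_1}^{k}\delta_{x_0}=\int_0^\infty p_k(\rho)\,\sigma_\rho\delta_{x_0}\,d\rho$ with $\sigma_\rho$ the radius-$\rho$ circle average and $p_k$ the law of $\rho_k$, and $\mathcal B_\rho=\mu(B_\H(\rho))^{-1}\,2\pi\!\int_0^\rho\!\sinh t\,\sigma_t\,dt$ the normalized ball average, an integration by parts turns this into $A_{r_1}^{k}\delta_{x_0}=\int_0^\infty q_k(\rho)\,\mathcal B_\rho\delta_{x_0}\,d\rho$ (plus terms supported near $\rho=0$, of exponentially small mass), where $q_k$ — essentially $p_k-p_k'$ up to the bounded factors $\tanh(\rho/2),\coth\rho$ — has total mass $1$, is concentrated at $\rho\approx k\alpha r_1$, and inherits the Gaussian tails of $p_k$. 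Then $\n{A_{r_1}^{k}\delta_{x_0}-\pi}\le\int_0^\infty|q_k(\rho)|\,\n{\mathcal B_\rho\delta_{x_0}-\pi}\,d\rho$, which we split at $\rho=R_X+\tfrac12\lambda\sqrt{R_X}$. For $\rho$ above this value Cauchy--Schwarz is now lossless: the eigenvalue of $\mathcal B_\rho$ on the $\lambda_j=\tfrac14+r_j^{2}$-eigenspace is $\widehat{\mathcal B_\rho}(\lambda_j)=\mu(B_\H(\rho))^{-1}\int_0^\rho\varphi_{s_j}(t)\,2\pi\sinh t\,dt$, and for tempered $\lambda_j$ the Harish-Chandra asymptotics give the uniform bound $|\widehat{\mathcal B_\rho}(\lambda_j)|\ll(1+|r_j|)^{-1}e^{-\rho/2}$ (with an extra factor $\rho$ for $|r_j|\lesssim\rho^{-1}$, from the coalescence of the two Harish-Chandra terms at $r_j=0$), so $\nn{\mathcal B_\rho\delta_{x_0}-\pi}^{2}\ll_{r_0}\rho^{2}e^{-\rho}$ by a convergent local-Weyl/pretrace bound $\sum_j(1+|r_j|)^{-3}|\phi_j(x_0)|^{2}\ll_{r_0}1$ (its Eisenstein analogue for the continuous spectrum being similar), whence $\n{\mathcal B_\rho\delta_{x_0}-\pi}\ll_{r_0}\rho\,e^{(R_X-\rho)/2}$, which is decreasing in $\rho$ and at the split point is $\ll_{r_0}(R_X+\lambda\sqrt{R_X})e^{-\lambda\sqrt{R_X}/4}\ll e^{-c\lambda^{2}}$ once $k\alpha r_1>R_X+\lambda\sqrt{R_X}$ and $R_X\gg_{r_0,r_1,\lambda}1$. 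For $\rho$ below the split point we use $\n{\mathcal B_\rho\delta_{x_0}-\pi}\le2$ and note that this range lies in the left tail of $q_k$ at distance $\ge\tfrac12\lambda\sqrt{R_X}$ from its center, contributing $\ll\int_{\rho<k\alpha r_1-\frac12\lambda\sqrt{R_X}}|q_k|\ll e^{-c\lambda^{2}\alpha r_1}$. Adding the two pieces gives $\n{A_{r_1}^{k}\delta_{x_0}-\pi}\ll_{r_0,r_1}e^{-c\lambda^{2}}$. The continuous-time case is identical with $A_{r_1}^{k}$ replaced by the heat kernel $B_t$, $p_k$ by the law of $d(o,\mathrm{BM}_t)$ (speed $1$) and $k\alpha r_1$ by $t$; there $B_t\delta_{x_0}$ is already a smooth density on $\H$, so no initial segment must be split off.

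\textbf{Main obstacle.} The whole difficulty sits in the sharp upper bound, with two sources. First, one must pin down the speed $\alpha r_1$ and obtain Gaussian concentration of $\rho_k$ on $\H$ with tails uniform for $\lambda$ up to order $\sqrt{R_X}$, together with the analogous control of $q_k$ and the fact that its negative part carries exponentially small mass; this is where the explicit formula for $\alpha$ is used. Second, and most essentially, one needs the lossless spectral estimate for ball averages — the uniform-in-$r_j$ decay $\widehat{\mathcal B_\rho}(\tfrac14+r_j^{2})\ll\rho\,e^{-\rho/2}/(1+|r_j|)$ on the tempered spectrum, with the $r_j=0$ pole-coalescence handled by hand, and the convergence of $\sum_j(1+|r_j|)^{-3}|\phi_j(x_0)|^{2}$ via the pretrace formula (this is exactly where the injectivity-radius bound $r_0$ enters, and where the Ramanujan hypothesis is used without any slack). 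Representation-theoretically the last point is the $L^{2+\varepsilon}$-integrability of $K$-finite matrix coefficients of tempered representations of $PSL_2(\R)$, which is the "cutoff $\Leftrightarrow$ temperedness" mechanism announced in the introduction. Everything else — the distance-decreasing property, volumes of hyperbolic balls, and the elementary manipulations of $p_k$ and $q_k$ — is routine.
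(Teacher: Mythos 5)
Your lower bounds and the radial--concentration input are essentially the paper's argument, but the upper bound, where you detour through ball averages, has genuine gaps. (a) The construction of $q_k$ by integrating by parts against $\rho\mapsto\mathcal{B}_\rho$ requires differentiating the radial density $p_k$ of $A_{r_1}^{k}\delta_{x_0}$ on $\H$; for the discrete walk nothing beyond $L^2$-membership of $p_k$ (for $k\ge3$) is available, and your assertions that $q_k$ has total mass $1$, Gaussian tails and exponentially small negative part amount to an unproved $L^1$-bound on $p_k'$ (equivalently a total-variation bound on $p_k$, uniform in $k$). Without it neither $\int|q_k|\,\n{\mathcal{B}_\rho\delta_{x_0}-\pi}\,d\rho$ nor the left-tail estimate $\int_{\rho<k\alpha r_1-\frac{\lambda}{2}\sqrt{R_X}}|q_k|\ll e^{-c\lambda^2}$ is justified. (b) Your spectral input for $\mathcal{B}_\rho$ is internally inconsistent: with $\bigl|\widehat{\mathcal{B}_\rho}(\lambda_j)\bigr|\ll(1+|r_j|)^{-1}e^{-\rho/2}$ the relevant sum is $\sum_j(1+|r_j|)^{-2}|\phi_j(x_0)|^2$, which diverges under the local Weyl law $\sum_{|r_j|\le T}|\phi_j(x_0)|^2\asymp T^2$; you silently pass to the exponent $-3$, i.e.\ you need (and do not prove) the stronger bound $(1+|r_j|)^{-3/2}e^{-\rho/2}$, together with a pretrace/local-Weyl estimate whose constant depends only on the injectivity radius $r_0$ and its Eisenstein analogue for the continuous spectrum --- all asserted, none established.

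The detour is also unnecessary, and your motivation for it is off: the constant-factor loss you describe comes only from iterating the one-step norm $|\varphi_{s}(r_1)|^{k}$. The paper stays with sphere averages: write $A_{r_1}^{k}\delta_{x_0}=\int A_\rho\bigl(A_{r_1}^{3}\delta_{x_0}\bigr)\,dm_{k-3}(\rho)$ (or start from the normalized ball function $b_{x_0,r_0}$), prove $\nn{A_{r_1}^{3}\delta_{x_0}}\ll_{r_0,r_1}1$ by a lifting/counting argument using the injectivity radius (Lemma \ref{lem:Technical Lemma2}), use the Hoeffding-type concentration of the radial measure at $\rho\approx k\alpha r_1$ (Corollary \ref{cor:integral and CLT for discrete}), and for $\rho\ge R_X+\frac{\lambda}{2}\sqrt{R_X}$ apply Cauchy--Schwarz plus the single operator bound $\Vert A_\rho\Vert_{L_0^2(X)}\le(\rho+1)e^{-\rho/2}$, which already beats $\mu(X)^{1/2}\asymp e^{R_X/2}$ with no loss in the cutoff location. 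This avoids pointwise eigenfunction bounds and any regularity of the radial density. Your scheme could likely be completed for the Brownian motion, where the radial kernel is smooth with explicit estimates, but as written the discrete-walk upper bound does not close.
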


As in Theorem \ref{thm1}, the lower bounds $(1a)$ and $(2a)$ do
not exploit the assumption that $X$ is Ramanujan nor the assumption
that the injectivity radius of $x_{0}$ exceeds $r_{0}$.

The above behavior of the random walk is closely related to the cutoff
phenomenon, which is defined in general as follows (see \cite{diaconis1996cutoff}).
Let $\left(P_{n}(x,y),X_{n}\right)$ be a series of Markov random
walks on a probability space $X_{n}$, and let $P_{n}^{t}\left(x,y\right)$
the $t$-step random walk on $X_{n}$. Let $f(n),g(n)$ be functions
such that $f(n)$ tends to infinity and $g(n)=o\left(f(n)\right)$
as $n\to\infty$. We say that the series $\left(P_{n}(x,y),X_{n}\right)$
exhibits a cutoff at time $f(n)$ with window of size $g(n)$, if
for every $1>\epsilon>0$, the time $t_{n}=\inf\left\{ t\mid\sup_{x_{0}}\n{P_{n}^{t}(x_{0},\cdot)-\pi_{n}}<\epsilon\right\} $
satisfies $t_{n}=f(n)+O_{\epsilon}\left(g\left(n\right)\right)$.
Determining whether a series of random walks exhibit a cutoff is a
fundamental problem (see \cite{diaconis1996cutoff}). Theorem \ref{thm2}
says that if a sequence of surfaces $X_{n}$ have injectivity radius
at least $r_{0}$ at every point of every surface then the random
walks on them exhibit a cutoff (and moreover the mixing time from
each point is the same and can be estimated explicitly).

\subsection*{Arithmetic Subgroups}

As said, Selberg's conjecture implies that the quotients $X$ of $\H$
by congruence subgroups of $SL_{2}\left(\mathbb{Z}\right)$ satisfy
the results of Theorem \ref{thm1} and Theorem \ref{thm2}. Using
the corrent knowledge, we can give slightly weaker statements (at
least for Theorem \ref{thm1}), which capture the essence of the result. 
\begin{thm}
\label{thm:SL2Z}Let $\Gamma=SL_{2}\left(\Z\right)$ or any cocompact
arithmetic lattice in $SL_{2}\left(\R\right)$, $X_{0}=\Gamma\backslash\H$
the corresponding quotient, $q\in\mathbb{N}$, $\Gamma\left(q\right)$
the principal congruence subgroup of $\Gamma$, $X_{q}=\Gamma\left(q\right)\backslash\H$
the corresponding quotient, and $\rho_{q}:X_{q}\to X_{0}$ be the
cover map.

Let $x_{0}^{(q)}\in X_{q}$ be a point such that its projection $\rho_{q}\left(x_{0}^{(q)}\right)$
to $X_{0}$ has injectivity radius at least a constant $r_{0}$. Then
for every $\epsilon_{0}>0$
\[
\mu\left(x\in X_{q}:d_{X_{q}}\left(x,x_{0}^{(q)}\right)\ge R_{X_{q}}\left(1+\epsilon_{0}\right)\right)/\mu\left(X_{q}\right)\to_{q\to\infty}0.
\]
\end{thm}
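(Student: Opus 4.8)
The plan is to deduce this statement from (a density-averaged version of) the Ramanujan case of Theorem \ref{thm1}. The point is that although the individual surfaces $X_q$ are not known to be Ramanujan, the density theorems of Sarnak--Xue type (\cite{sarnak1991bounds}) say that the \emph{exceptional} eigenvalues below $1/4$ are rare: the number of eigenvalues of the Laplacian on $L^2(X_q)$ in the window $[\tfrac14 - t^2, \tfrac14]$ (equivalently, exceptional spectral parameters of size $\le t$) is bounded by $C \cdot \mu(X_q)^{\beta(t)}$ for some exponent $\beta(t)<1$, uniformly in $q$. So the strategy is: open up the spectral expansion used to prove the Ramanujan half of Theorem \ref{thm1}, isolate the contribution of exceptional eigenvalues, bound it via the density estimate, and show it is $o(\mu(X_q))$.

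First I would recall the mechanism behind the upper bound in Theorem \ref{thm1}: one takes a suitable non-negative test function $\phi = \phi_{R,\gamma}$ supported outside the ball of radius $R_{X_q}(1+\epsilon_0)$ around $x_0^{(q)}$ (or rather, one estimates $\langle \mathbf{1}_{d_X(x_0,\cdot)\ge R}, \mathbf{1}_{d_X(x_0,\cdot)\ge R}\rangle$ by writing the indicator's spherical-average smoothing in terms of the point-pair invariant / heat or wave propagator) and expands the relevant operator spectrally on $L^2(X_q)$. The trivial (constant) eigenfunction contributes the main term, of size $\asymp \mu(X_q)\cdot R_{X_q}^{2-\gamma}$ after choosing $R = R_{X_q} + \gamma\ln R_{X_q}$; every non-trivial tempered eigenvalue $\lambda = \tfrac14 + r^2$, $r\in\mathbb{R}$, contributes a term whose transform is bounded by (essentially) the Harish-Chandra/spherical-function decay $e^{-R/2}\cdot\mathrm{poly}(R)$ times $|\langle \delta_{x_0}, \psi_\lambda\rangle|^2$, and the sum of the latter over the spectrum is controlled by the injectivity-radius hypothesis (a local Weyl law / pretrace bound at $x_0^{(q)}$, using $\mathrm{inj}(x_0^{(q)})\ge r_0$) by $\ll_{r_0} \mathrm{poly}(R)$. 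The key quantitative feature is that an \emph{exceptional} eigenvalue $\lambda = \tfrac14 - s^2$ with $0<s\le 1/2$ contributes not $e^{-R/2}$ but $e^{-(\tfrac12 - s)R}$, i.e., it decays more slowly, and for $s$ close to $1/2$ this can overwhelm the main term.

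The core new step, then, is the interpolation between the spectral decay and the density bound. Fix a threshold $s_0 = s_0(\epsilon_0)>0$ small. Split the exceptional spectrum into $0<s\le s_0$ and $s_0<s\le 1/2$. For $s\le s_0$, the spectral transform is $\ll e^{-(\tfrac12 - s_0)R}\mathrm{poly}(R)$, and summing over all such eigenvalues (their total number is $\ll \mu(X_q)$ trivially, but weighted by $|\langle\delta_{x_0},\psi_\lambda\rangle|^2$ it is $\ll_{r_0}\mathrm{poly}(R)$ by the same local bound as for the tempered part), the contribution is $\ll_{r_0}\mu(X_q)^{0}\cdot e^{-(\tfrac12-s_0)R}\mathrm{poly}(R)$, which compared to the main term $\mu(X_q)R^{2-\gamma}\asymp e^{-(\tfrac12)\cdot(\text{something})}\dots$ — more carefully, with $R = R_{X_q}(1+\epsilon_0)$ one has $e^{-(\tfrac12-s_0)R} = \mu(X_q)^{-(1+\epsilon_0)(1-2s_0)}\cdot\mathrm{poly}$, so for $s_0$ small this beats $\mu(X_q)$ with room to spare. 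For $s_0 < s \le 1/2$, I would \emph{not} use the local bound on $|\langle\delta_{x_0},\psi_\lambda\rangle|^2$ (which does not gain in $q$) but instead bound each such coefficient crudely and use the density theorem: the \emph{number} of such eigenvalues is $\ll \mu(X_q)^{\beta(s_0)}$ with $\beta(s_0)<1$, each contributes $\ll e^{-(\tfrac12 - \tfrac12)R}\cdot(\dots)$ — i.e.\ at worst $O(1)$ from the propagator times a polynomial bound on $|\langle\delta_{x_0},\psi_\lambda\rangle|^2 \ll_{r_0} \mu(X_q)\cdot\mathrm{poly}(R)$ (pointwise pretrace bound, uniform by injectivity radius) — giving total $\ll_{r_0}\mu(X_q)^{1+\beta(s_0)}\mathrm{poly}(R)$; this is $o(\mu(X_q)^2)$, and since the quantity we want is a ratio $(\cdot)/\mu(X_q)$ and the main term we are comparing against is $\asymp\mu(X_q)$, we need the exceptional contribution to be $o(\mu(X_q))$ — so the crude bound $|\langle\delta_{x_0},\psi_\lambda\rangle|^2\ll_{r_0}\mu(X_q)\mathrm{poly}$ is too lossy and one must keep the $\ell^2$-normalized local bound $\sum_\lambda |\langle\delta_{x_0},\psi_\lambda\rangle|^2 \ll_{r_0}\mathrm{poly}(R)$ together with the density count, interpolating so that the few exceptional eigenvalues each carry a small share. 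The main obstacle, and the step requiring genuine care, is exactly this: \emph{combining the density theorem with the injectivity-radius-uniform local $L^2$-normalization of eigenfunctions so that the exceptional band $(s_0, 1/2]$ contributes $o(\mu(X_q))$} — this is where the hypothesis $d_{X_{q}}(x,x_0^{(q)})\ge R_{X_q}(1+\epsilon_0)$ (rather than $R_{X_q}+\gamma\ln R_{X_q}$) is used, since the extra multiplicative $\epsilon_0$ in the radius gives the polynomial savings $\mu(X_q)^{-\epsilon_0(1-2s_0)+\text{something}}$ needed to absorb the density exponent $\beta(s_0)$ after taking $s_0$ small enough in terms of $\epsilon_0$. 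I would finish by choosing $s_0$ as a function of $\epsilon_0$ (and of the density exponent function $\beta$), then $q\to\infty$, so every piece tends to $0$.
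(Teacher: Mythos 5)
Your reduction to the spectral expansion from Theorem \ref{thm1} and your treatment of the near-tempered band ($0<s\le s_0$, handled by the extra factor $(1+\epsilon_0)$ in the radius plus a Bessel-type bound on the coefficients) are fine, and they match the spirit of the paper. But the step you yourself flag as ``requiring genuine care'' is exactly where the proposal has a genuine, fatal gap: for the strongly exceptional band you need, per new eigenfunction, a coefficient bound of size roughly $N^{-1}\left\Vert b_{x_{0},r_{0}}\right\Vert _{2}^{2}$ (where $N=[\Gamma:\Gamma(q)]$), and neither the local Weyl law/pretrace bound at $x_{0}^{(q)}$ nor Bessel gives this --- they only give total exceptional mass $O_{r_{0}}(1)$, with no gain in $q$. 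A priori a single eigenfunction with spectral parameter at the bottom of the allowed range (say at the uniform gap $p_{\max}$, e.g. Selberg's $3/16$) could concentrate at $x_{0}^{(q)}$ and carry coefficient mass $\asymp_{r_{0}}1$; its contribution to $\left\Vert A_{r'}\left(b_{x_{0},r_{0}}-\pi\right)\right\Vert _{2}^{2}$ is then $\gg N^{-2\left(1+\epsilon_{0}\right)/p_{\max}}$ up to polynomial factors, which is far larger than the required $o\left(N^{-1}\right)$ unless $p_{\max}<2(1+\epsilon_{0})$ --- i.e. essentially Selberg's conjecture, which the theorem is precisely designed to avoid. The density theorem controls only the \emph{number} of such eigenvalues (e.g. $N^{\epsilon}$ near the bottom), and scarcity alone cannot compensate when the propagator has essentially no decay; no interpolation between the density count and an $O_{r_{0}}(1)$ local bound can close this.

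The missing ingredient is the paper's use of \emph{normality} of $\Gamma(q)$ in $\Gamma$ (Lemma \ref{lem:Covering Lemma 2}): the new eigenspaces $W\subset L^{2}\left(X_{q}/X_{0}\right)$ are invariant under the deck group $H=\Gamma/\Gamma(q)$, the $N$ balls $B_{r_{0}}\left(hx_{0}^{(q)}\right)$ are disjoint because the projection of $x_{0}^{(q)}$ to $X_{0}$ has injectivity radius $\ge r_{0}$, and averaging $\left\Vert P_{W}b_{hx_{0},r_{0}}\right\Vert _{2}^{2}$ over $h\in H$ yields $\left\Vert P_{W}b_{x_{0},r_{0}}\right\Vert _{2}^{2}\le\frac{\dim W}{N}\left\Vert b_{x_{0},r_{0}}\right\Vert _{2}^{2}$. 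This is the per-eigenvalue $1/N$ saving that, combined with the density count $M\left(X_{q},p\right)\ll N^{2/p+\epsilon}$, the uniform spectral gap ($p_{\max}$, which you also never invoke but need), and the decay $(r+1)e^{-r/p}$ from the $L^{p}$ version of Proposition \ref{prop:bounds on sepctrum}, makes the exceptional contribution $o\left(N^{-1}\right)$ at radius $(1+\epsilon_{0})R_{X_{q}}$ (Theorem \ref{thm:Normal Coverings Theorem} and Corollary \ref{cor:Density Theorem Corollary}). To repair your argument you would need to add this equidistribution-over-the-deck-orbit input (or some substitute amplification), not merely a finer interpolation of the bounds you already have.
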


\subsection*{Methods of Proof}

The proofs of the three Theorems exploit the following proposition:
\begin{prop}
\label{prop:bounds on sepctrum}The surface $X$ is Ramanujan if and
only if for every $r\ge0$ the non-trivial spectrum of $A_{r}$ on
$L_{0}^{2}\left(X\right)=\left\{ f\in L^{2}\left(X\right):\int f(x)dx=0\right\} $
is bounded by $\left(r+1\right)e^{-r/2}$.
\end{prop}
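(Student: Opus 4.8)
The plan is to reduce everything to the spherical function (Harish-Chandra function) on $\H$ and the classical relation between eigenvalues of the Laplacian and eigenvalues of the radial averaging operators $A_r$. Both $\Delta$ and every $A_r$ are self-adjoint operators built from the algebra of radial (bi-$K$-invariant) convolution operators on $G=PSL_2(\R)$, so they are simultaneously diagonalized by the same spectral decomposition of $L^2_0(X)$. The key point is that on the spherical principal (or complementary) series representation $\pi_s$ with Laplace eigenvalue $\lambda=s(1-s)$ — writing $s=\tfrac12+it$ with $t\in\R$ in the tempered range and $s\in(\tfrac12,1)$ (equivalently $t\in i(0,\tfrac12)$) in the non-tempered range — the operator $A_r$ acts on the normalized $K$-fixed vector by the scalar $\varphi_s(r)$, where $\varphi_s$ is the elementary spherical function. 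So the statement to prove is purely about the size of $\varphi_s(r)$: temperedness of all non-trivial subrepresentations (i.e. $\lambda\ge 1/4$, i.e. $s\in\tfrac12+i\R$) is equivalent to the bound $|\varphi_s(r)|\le (r+1)e^{-r/2}$ holding for all relevant $s$ and all $r\ge0$.

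First I would set up the formula for $\varphi_s(r)$ explicitly. Parametrizing the geodesic distance by $r$ and using the standard integral representation
\[
\varphi_s(r)=\frac{1}{\pi}\int_0^{\pi}\bigl(\cosh r-\sinh r\cos\theta\bigr)^{-s}\,d\theta ,
\]
one reads off two facts. For $s=\tfrac12+it$, $t\in\R$ (the tempered case), the classical Harish-Chandra estimate gives $|\varphi_{1/2+it}(r)|\le \varphi_{1/2}(r)\ll (1+r)e^{-r/2}$; the factor $(1+r)$ is exactly the well-known polynomial correction at the tempered endpoint $s=1/2$, and this is where the "$r+1$" in the proposition comes from. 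This handles the "only if" direction: if $X$ is Ramanujan, every non-trivial spherical constituent has $s\in\tfrac12+i\R$, so $A_r$ acts on each with eigenvalue bounded by $(r+1)e^{-r/2}$, and since the non-trivial spectrum of $A_r$ on $L^2_0(X)$ is the closure of the union of these eigenvalues, the bound follows. (For constituents without $K$-fixed vectors $A_r$ acts as $0$, so they are harmless.)

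For the "if" direction I would argue by contrapositive: suppose $X$ is not Ramanujan, so there is a non-trivial subrepresentation with Laplace eigenvalue $\lambda<1/4$, equivalently a spherical constituent with parameter $s\in(\tfrac12,1)$, and the corresponding normalized $K$-fixed vector is an $A_r$-eigenfunction with eigenvalue $\varphi_s(r)$. The task is to exhibit some $r\ge0$ with $\varphi_s(r)>(r+1)e^{-r/2}$. From the integral representation, as $r\to\infty$ one has the asymptotic $\varphi_s(r)\sim c(s)\,e^{-(1-s)r}$ with $c(s)>0$ and $1-s<\tfrac12$, so $\varphi_s(r)/\bigl((r+1)e^{-r/2}\bigr)\sim c(s)\,e^{(s-1/2)r}/(r+1)\to\infty$. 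Hence for $r$ large enough the bound is violated, which is exactly what we need. (One should check $c(s)>0$: this is immediate since the integrand is positive, so in fact $\varphi_s(r)>0$ for all $r$ when $s$ is real, and the asymptotic constant is a positive beta-type integral.)

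The main obstacle — really the only nontrivial analytic input — is the uniform Harish-Chandra bound $|\varphi_{1/2+it}(r)|\le\varphi_{1/2}(r)$ for all $t\in\R$ together with the sharp asymptotic $\varphi_{1/2}(r)\asymp (1+r)e^{-r/2}$; both are standard (e.g. via the integral representation above and stationary-phase/Legendre-function asymptotics, or from Harish-Chandra's general estimates on rank-one symmetric spaces), but they must be invoked with the correct polynomial factor to make the constant "$r+1$" come out exactly. Everything else — the simultaneous diagonalization, the identification of the $A_r$-eigenvalue with $\varphi_s(r)$, and the equivalence "$\lambda\ge1/4 \Leftrightarrow s\in\tfrac12+i\R$" — is routine representation theory of $SL_2(\R)$ and I would treat it as such, citing the spherical Plancherel setup rather than reproving it.
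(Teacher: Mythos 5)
Your argument is correct, and its first half coincides with the paper's: the bound for tempered parameters is obtained there from the integral representation of $\varphi_{\frac12+is}$ together with $\cosh r-\cosh(rx)\ge(\cosh r-1)(1-x^{2})$, which yields exactly $\frac{r}{2}\left(\sinh\frac{r}{2}\right)^{-1}\le(r+1)e^{-r/2}$; note you really need this constant-one form rather than the asymptotic $\ll(1+r)e^{-r/2}$, since the proposition carries no implied constant, but the same elementary computation delivers it. Where you genuinely diverge is the converse. You take the route the paper mentions and deliberately avoids (``analyzing the complementary series''): exhibit a non-tempered spherical constituent with parameter $s\in(\tfrac12,1)$ and use the lower asymptotic $\varphi_{s}(r)\sim c(s)e^{-(1-s)r}$ to violate the bound at large $r$. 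The paper instead argues conceptually in Proposition \ref{prop:ArBoundsImpliesRamanujan}: temperedness is characterized by matrix coefficients lying in $L^{2+\epsilon}(G)$, which via the Cartan decomposition becomes $\int_{r\ge0}e^{r}\left|\left\langle f,A_{r}f'\right\rangle\right|^{2+\epsilon}dr<\infty$, and the assumed decay $(r+1)e^{-r/2}$ makes this integral converge with no spherical-function asymptotics at all. Your route is more quantitative (it shows the bound fails at an explicit finite $r$) and is in fact carried out by the paper later, in the lower bound \ref{eq:Lp pf2} inside the proof of the $L^{p}$ generalization (Proposition \ref{prop:Lp proposition}); the paper's route is softer and extends verbatim to that $L^{p}$ setting. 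One point you should make explicit in your version: ``not Ramanujan'' a priori only gives a spectral value of $\Delta$ below $1/4$, and to pass to an honest $L^{2}$ eigenfunction on which $A_{r}$ acts by the scalar $\varphi_{s}(r)$ you need the Lax--Phillips fact that the spectrum below $1/4$ is discrete (Lemma \ref{lem:Lax-Phillips}), or else a direct-integral/spectral-mapping argument in its place; the paper's conceptual proof sidesteps this entirely, while its $L^{p}$ proof invokes exactly that lemma.
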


A similar (generalized) proposition plays a crucial role in the work
of Harish-Chandra (see \cite[Theorem 3]{harish1958spherical}). Theorem
\ref{thm1} is actually a direct application of Proposition \ref{prop:bounds on sepctrum}.

The proof of Theorem \ref{thm2} combines Proposition \ref{prop:bounds on sepctrum}
with two other results. The first one, Proposition \ref{lem:Technical Lemma2},
says that after 3 steps the random walk measure $A_{r_{1}}^{3}\delta_{x_{0}}$
(respectively, the Brownian motion measure $B_{t_{0}}\delta_{x_{0}}$
at a fixed time $t_{0}>0$) is an $L^{2}$ function on $X$, with
a bounded $L^{2}$ norm depending only on the injectivity radius $r_{0}$.
The second result, Proposition \ref{cor:integral and CLT for discrete}
and Proposition \ref{prop:Integral and CLT for Brownian}, which is
well known for the Brownian motion, may be described as a concentration
of measure theorem for the rate of escape of the random walk $A_{r_{1}}^{k}$
(respectively $B_{t}$) on $\H$. Namely, we may write $A_{r_{1}}^{k}\cong\int_{r}f_{k}(r)A_{r}dr$
(respectively $B_{t}\cong\int_{r}g_{t}(r)A_{r}dr$), where most of
the measure $f_{k}(r)dr$ is concentrated at $\sim\alpha kr_{1}$
(respectively, $g_{t}(r)dr$ is concentrated at $\sim t$).

The proof of Theorem \ref{thm:SL2Z} depends on the following facts:
$\Gamma\left(q\right)$ is normal in $\Gamma$, there exists an absolute
lower bound on the smallest eigenvalue of $X_{q}$, on a careful general
analysis of the required bounds on the number of exeptional eigenvalues
of $X_{q}$, and on an $L_{p}$ generalization of Proposition \ref{prop:bounds on sepctrum}.
It is a beautiful result that the bound of the number of exceptional
eigenvalues that is required is exacly the ``elementary'' density
bound discussed in \cite{sarnak1991bounds}. The bound states the
number of eigenvalues of $X_{q}$ with corresponding matrix coefficients
not in $L^{p}$ for $p>2$ is $\ll_{\epsilon}\left[\Gamma:\Gamma\left(q\right)\right]^{2/p+\epsilon}$
(see also \cite{sarnak1990diophantine}). Note that the article \cite{sarnak1991bounds}
assumes cocompactness, which was removed in \cite{huntley1993density}
(stronger results for $SL_{2}\left(\Z\right)$ were also proven earlier
by different methods in \cite{iwaniec1985density,huxley1986exceptional})
. Theorem \ref{thm:SL2Z} also holds for $SL_{2}\left(\Z\right)$
for non-prime $q$, as a non-elementary bound on the smallest eigenvalue
was proven already by Selberg in \cite{selberg1965estimation}. See
the discussion in Section \ref{sec:Coverings} for full details.

This work is similar in spirit to the results of \cite{lubetzky2015cutoff},
and shows the general connection between the common distance and cutoff
phenomena in quotients of symmetric spaces (infinite regular trees
and the hyperbolic plane in these cases) and temperedness of representations
(or the Ramanujan conjecture). 

\subsection*{Open questions}

We expect that the results of this article can be extended to quotients
of higher dimensions, and also to other contexts (e.g. the action
of Hecke operators on $SL_{2}\left(Z\right)\backslash SL_{2}\left(\mathbb{R}\right)$
and its covers). Theorems analogous to Theorem \ref{thm1} for quotients
of $p$-adic Lie groups (i.e. Ramanujan complexes) are proven in \cite[Theorem 1.9]{kamber2016lpcomplex},
and \cite[Theorem 1.ii]{lubetzky2017random}. Theorem \ref{thm1}
is also closely related to the optimal covering properties of the
Golden-Gates of \cite{parzanchevski2017super}.

While we were unable to show it, we believe that it is possible to
prove in the notations of Theorem \ref{thm:SL2Z} that (at least for
$SL_{2}\left(\Z\right)$) there exists a constant $C>0$ such that
\[
\mu\left(x\in X_{q}:d_{X_{q}}\left(x,x_{0}^{(q)}\right)\ge R_{X_{q}}+C\ln\left(R_{X_{q}}\right)\right)/\mu\left(X_{q}\right)\to_{q\to\infty}0.
\]
Selberg's conjecture would give $C=2+\epsilon$, $\epsilon>0$ by
Theorem \ref{thm1}. The stronger density theorems of \cite{iwaniec1985density}
fall just a bit short of proving it. See Remark \ref{rem:covering remark}.

The following conjectures are natural continuous analogs of well known
combinatorial results, in the spirit of this article. Assume that
the lattice $\Gamma$ is a free group (for example, the principal
congruence subgroup $\Gamma=\Gamma(2)=\ker\left\{ PSL_{2}\left(\Z\right)\stackrel{\text{mod}}{\to}PSL_{2}\left(\Z/2\Z\right)\right\} $,
which is freely generated by $\left(\begin{array}{cc}
1 & 2\\
0 & 1
\end{array}\right)$ and $\left(\begin{array}{cc}
1 & 0\\
2 & 1
\end{array}\right)$). Then every onto homomorphism $\phi:\Gamma\to S_{n}$ defines an
index $n$ subgroup $\Gamma'\subset\Gamma$, by $\Gamma'=\left\{ \gamma\in\Gamma:\phi(\gamma)(1)=1\right\} $,
and every index $n$ subgroup of $\Gamma$ can be defined this way.
Since each homomorphism is defined using the generators, there is
a finite number of such homomorphism, and it defines a probability
measures on the index $n$ subgroups of $\Gamma$, or equivalently,
the $n$-covers of $X$.
\begin{conjecture}
\label{conj:comb conjectures}Assume that $\Gamma$ is a free group.

(1) For every $\epsilon>0$, the probability that every new eigenvalue
$\lambda$ of an $n$-cover $X'$ of $X$ satisfies $\lambda\ge1/4+\epsilon$
is $1-o(1)$.

An analogous statement for graphs is called Alon's conjecture, and
was proved in \cite{friedman2003proof}. 

(2) There exists a $2$-cover $X'$ of $X$, such that every new eigenvalue
$\lambda$ of $X'$ satisfies $\lambda\ge1/4$.

In the graph setting this statement is called Bilu-Linial's conjecture,
and was solved for the bipartite case in \cite{marcus2013interlacing}.
\end{conjecture}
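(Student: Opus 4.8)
We do not prove this conjecture, but we indicate how one might, part by part, in analogy with the graph-theoretic arguments cited in the statement: the trace/word-measure method behind \cite{friedman2003proof} for part (1), and the interlacing-families method of \cite{marcus2013interlacing} for part (2). In both, Proposition \ref{prop:bounds on sepctrum} (or, for the ``almost Ramanujan'' statement of part (1), a quantitative variant) is the dictionary that turns the word ``Ramanujan'' into a bound on the spectral radius of the averaging operators $A_r$ on the relevant part of $L^2$ -- which is what one actually estimates -- with $(1+r)e^{-r/2}$ the target bound.

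\emph{Part (1).} Fix the base $X=\Gamma\backslash\H$; an $n$-cover $X'$ is encoded by a homomorphism $\phi\colon\Gamma\to S_n$, and $A_r$ on $X'$ is $A_r$ on $X$ twisted by the flat bundle with monodromy $\phi$, the new part $L^2_{\mathrm{new}}(X')$ being the summand attached to the reduced permutation representation. I would run a high-moment estimate: for $k=k(n)$ growing slowly, expand $\operatorname{tr}\!\big(A_r^{\,2k}|_{L^2_{\mathrm{new}}(X')}\big)$ by the twisted Selberg trace formula. Its identity term reproduces the return mass of the length-$r$-step random walk on $\H$ after $2k$ steps, which the Harish--Chandra bound underlying Proposition \ref{prop:bounds on sepctrum} controls by $\big((1+r)e^{-r/2}\big)^{2k}$ up to a factor polynomial in $k$; the remaining terms form a sum over conjugacy classes $[\gamma]\neq1$ in $\Gamma$ of a closed-geodesic orbital integral times $\mathbb{E}_\phi[\operatorname{fix}(\phi(\gamma))]-1$, and by the word-measure estimates of Nica and of Puder--Parzanchevski this weight is $O\!\big(n^{1-\pi(\gamma)}\big)$ with $\pi(\gamma)\ge1$ the primitivity rank. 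Hence the only dangerous classes are essentially the proper powers and the elements bounding low-complexity subsurfaces; counting these and using the exponential decay of the orbital integrals past length $\asymp2kr$ should make the total error $o(1)$ relative to the main term. Markov's inequality then controls the new spectral radius in probability for one $r$, and a net in $r$ (using the Lipschitz dependence of $A_r$ on $r$) with a union bound gives: with probability $1-o(1)$ every new eigenvalue of $X'$ exceeds $1/4+\epsilon$. The main obstacle is the one that made Friedman's theorem hard -- controlling the ``tangled'' contributions -- here aggravated by the continuous length parameter (one integrates over segment lengths rather than summing over steps) and, when $\Gamma$ is non-cocompact as for $\Gamma(2)$, by having to treat the cusps and the short closed geodesics of $X'$ separately.

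\emph{Part (2).} Let $g$ be the rank of the free group $\Gamma$; a $2$-cover corresponds to a class $s\in H^1(X;\Z/2\Z)\cong(\Z/2\Z)^{g}$, and its new eigenvalues are those of the Laplacian twisted by the flat $\pm1$ line bundle $L_s$. The natural substitute for the characteristic polynomial of a signed graph is the twisted Selberg zeta function $Z_s(u)$: its zeros in the strip $0<\operatorname{Re}(u)<1$ record the new eigenvalues via $\lambda=u(1-u)$, a new eigenvalue being $\ge1/4$ exactly when the zero sits on $\operatorname{Re}(u)=\tfrac12$. Following the Marcus--Spielman--Srivastava template I would (i) show the average $2^{-g}\sum_{s}Z_s$ is ``Ramanujan in expectation'', with all its relevant zeros on $\operatorname{Re}(u)=\tfrac12$, by feeding the twist into the trace formula and observing that sign-averaging annihilates exactly the group elements not representing a class in $H^1(X;\Z/2\Z)$, leaving an arithmetically transparent main term; and (ii) show $\{Z_s\}_s$ is an interlacing family as the coordinates of $s$ are revealed one bit at a time, so that some $Z_s$ keeps all its relevant zeros at least as close to $\operatorname{Re}(u)=\tfrac12$ as the average -- and that $s$ furnishes a Ramanujan $2$-cover, the bipartite case of \cite{marcus2013interlacing} corresponding to the analogous one-sided statement. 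The real obstacle is structural: the interlacing-families machinery is built for genuine polynomials, and one must make sense of ``real-rootedness'' and ``common interlacing'' for a transcendental object -- a regularised determinant, equivalently a Selberg zeta function -- whose zeros lie on a vertical line rather than on $\R$, and verify interlacing for infinitely many zeros. Making either part precise is, as far as I know, exactly what remains open, so the above is a programme rather than a proof.
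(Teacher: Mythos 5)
This statement is stated in the paper as a conjecture (the continuous analogue of Alon's conjecture and of Bilu--Linial), and the paper offers no proof of it; so there is nothing in the text to compare your argument against. You correctly recognized this: you did not claim a proof, and your sketch --- a Friedman-style trace/word-measure programme for part (1) and an interlacing-families programme via twisted Selberg zeta functions for part (2) --- is a reasonable account of how the cited graph-theoretic results might be transported to hyperbolic surfaces, together with an honest identification of the genuine obstacles (controlling tangled terms and the continuous length parameter in (1); making sense of real-rootedness and interlacing for a transcendental zeta function in (2)). Your response is therefore consistent with the paper's treatment: the statement remains open, and your text should be read as a programme, exactly as you flag it, not as a proof.
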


See also \cite{brooks2004random}, where (in a slightly different
random model) weaker versions of (1) are proved.

\subsection*{Outline of the Article}

In Section \ref{sec:Preliminaries-and-Notation} we set notations
and discuss the harmonic analysis on $\H$, and its relation to the
operator $A_{r}$ and the Laplacian. We also prove a bound on the
$L^{2}$-spectrum of $A_{r}$ on $\H$. In Section \ref{sec:Proof-of-Theorem1}
we prove Theorem \ref{thm1}. In Section \ref{sec:The-Random-Walk}
we prove some versions of the central limit theorem for the random
walks. For the discrete random walk we reduce the problem to the standard
central limit theorem. For the Brownian motion this result is well
known. In Section \ref{sec:DeltaToL2} we prove that after a short
time the random walks sends the delta measure on a point to a bounded
$L^{2}$-function. In Section \ref{sec:Proof-of-Theorem2} we prove
Theorem \ref{thm2}. 

In the rest of the article we prove a generalized version of Theorem
\ref{thm:SL2Z}. In Section \ref{sec:Lp-expanders} we generalize
the bounds for spectra and Proposition \ref{prop:bounds on sepctrum}
to the non-Ramanujan case. We also give a weak version of Theorem
\ref{thm1}, which depends on the smallest non-trivial eigenvalue
of the Laplacian. In Section \ref{sec:Coverings} we discuss covers
of a fixed quotient $X_{0}$, and in particular normal covers. The
requirement on the spectrum of normal covers is stated somewhat abstractly
in Theorem \ref{thm:Normal Coverings Theorem}. We then discuss density
theorems and known results about them, and show that the density theorems
satisfy the requirements of Theorem \ref{thm:Normal Coverings Theorem},
thus proving Corollary \ref{cor:Density Theorem Corollary}, which
implies Theorem \ref{thm:SL2Z}.

We also have two appendices. In the first appendix, Section \ref{sec:Concentration of distance},
we prove that for any fixed $x_{0}\in X$ there exists a distance
$R_{x_{0},X}$ such that the distances from $x_{0}$ to other vertices
is concentrated around $R_{x_{0},X}$ in a window of a constant size,
where the constant depends on the smallest non-trivial eigenvalue
of the Laplacian. Theorem \ref{thm1} implies that if $X$ is Ramanujan
and $x_{0}$ has a lower bound on its injectivity radius, then $R_{X}\le R_{X,x_{0}}\le R_{X}+\left(2+\epsilon\right)\ln R_{X}$.
The proof involves some interesting isoperimetric inequalities.

In the second appendix, Section \ref{sec:Cutoof and Comparison-with-the-flat},
we show that the Gaussian random walks on the flat surfaces $\left(a\mathbb{Z}^{2}\right)\backslash\mathbb{\mathbb{R}}^{2}$,
$a\to\infty$ do not exhibit a cutoff.

\subsection*{Acknowledgments}

We are grateful to Elon Lindenstrauss, Alex Lubotzky, Shachar Mozes
and Józef Dodziuk for fruitful discussions. The first author is supported
by the ERC grant 336283. This work is part of the Ph.D. thesis of
the second author at the Hebrew University of Jerusalem. A large part
of this work was carried out in the café Bread\&Co in Tel-Aviv, to
which we are thankful for its coffee and hospitality.

\section{\label{sec:Preliminaries-and-Notation}Preliminaries}

\subsection*{The hyperbolic plane}

There are several models for the hyperbolic plane $\H$ of constant
curvature $-1$, and we stick to the upper half-plane model. That
is the complex half-plane $\{z\in\mathbb{C}\mid\text{Im}(z)>0\}$
endowed with the metric $ds^{2}=dz^{2}/\text{(Im}(z))^{2}$. For $z=x+iy,\:z'=x'+iy'\in\mathbb{H}$
the distance $d(z,z')$ between them is 
\[
d\left(\left(x,y\right),\left(x',y'\right)\right)=\text{acosh\ensuremath{\left(1+\frac{\left(x'-x\right)^{2}+\left(y'-y\right)^{2}}{2yy'}\right).}}
\]
The group $G=PSL_{2}(\R)$ acts on $\mathbb{H}$ by Mobius transformations,
i.e., 
\[
\left(\begin{array}{cc}
a & b\\
c & d
\end{array}\right)\cdot z=\frac{az+b}{cz+d},
\]
and constitutes the group of orientation preserving isometries of
$\mathbb{H}$. It also acts transitively on the points of $\mathbb{H}$,
with the subgroup $K=PSO_{2}(\mathbb{R})\subset G$ being the stabilizer
of the point $i$, to which we refer as the origin of $\H$. The subgroup
$K$ acts on $\H$ by rotations around $i$. The plane $\mathbb{H}$
can be identified with the quotient $G/K$, and in particular, the
circle of radius $r$ around $i$ identifies with the double coset
$K\left(\begin{array}{cc}
e^{r/2} & 0\\
0 & e^{-r/2}
\end{array}\right)K$. The Haar measure on $G$ which is normalized so that the measure
of $K$ is equal to $1$ agrees with the standard measure $\mu$ on
$\H$.

\subsection*{Harmonic analysis on $\mathbb{H}$}

For $f\in L^{1}\left(\H\right)$, its Helgason-Fourier transform $\widehat{f}(s,k)\in C\left(\C\times K\right)$,
is defined for $s\in\mathbb{C}$ and $k\in K=PSO_{2}(\mathbb{R})$
as 
\[
\widehat{f}(s,k)=\intop_{\mathbb{H}}f(z)\overline{\left(\text{Im}(kz)\right)^{\frac{1}{2}+is}}dz.
\]
In the case when $f$ is $K$-invariant, i.e., $f(kz)=f(z)$ for all
$z\in\H$ and $k\in K$, its transform is independent of $k$ and
can be written with the help of the spherical functions. For every
$s\in\C,$ the corresponding spherical function is a $K$-invariant
function defined as 
\[
\varphi_{\frac{1}{2}+is}(z)=\int_{K}\overline{\left(\text{Im}(kz\right)^{\frac{1}{2}+is-2}}dk.
\]
Since $\varphi_{\frac{1}{2}+is}$ is $K$-invariant, it depends solely
on the distance from a point to the origin $i$, and can be written
as
\[
\varphi_{\frac{1}{2}+is}(z)=\varphi_{\frac{1}{2}+is}(ke^{-r}i)=P_{-\frac{1}{2}+is}(\cosh r),
\]
where $k\in K$, $r\in\R_{\ge0}$ is the distance from $z$ to $i,$
and $P_{s}(r)$ is the Legendre function of the first kind. Then the
Helgason-Fourier transform of a $K$-invariant function $f$ reads
as
\[
\widehat{f}(s)=\intop_{\mathbb{H}}f(z)\varphi_{\frac{1}{2}+is}(z)dz=\intop_{0}^{\infty}f(e^{-r}i)P_{-\frac{1}{2}+is}(\cosh r)\sinh rdr.
\]

For two functions $f_{1},f_{2}\in L^{1}\left(\H\right)$, their convolution
is defined as
\[
f_{1}\ast f_{2}(z)=\int_{G}f_{1}(gi)f_{2}(g^{-1}z)dg.
\]
We exploit of the following properties of the Helgason-Fourier transform
on $\H$. For an extensive presentation of the theory, see \cite{Helgason2000,Terras2013}.
\begin{prop}
\label{prop:Plancherel-Helgason}(\cite[Theorem 3.2.3]{Terras2013})
\begin{enumerate}
\item (Plancherel Formula) The map $f\to\widehat{f}$ extends to an isometry
of $L^{2}\left(\mathbb{H},d\mu\right)$ with $L^{2}\left(\mathbb{R}\times K,\frac{1}{4\pi}s\tanh\pi s\,dsdk\right)$,
where the $K$ is identified with the interval $[0,1)$.
\item (Convolution property) For $f,g\in L^{1}(\mathbb{H})$, where $g$
is $K$-invariant, 
\[
\widehat{f\ast g}=\widehat{f}\cdot\widehat{g},
\]
where $\ast$ stands for convolution, and $\cdot$ for pointwise multiplication.
\end{enumerate}
\end{prop}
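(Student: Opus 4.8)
The plan is to treat the two parts separately. Part (2), the convolution property, is essentially formal: fix a $K$-invariant $g\in L^{1}(\H)$ and a ``plane wave'' $e_{s,k}(z)=(\mathrm{Im}(kz))^{1/2+is}$. The mechanism is that convolution by a radial function diagonalizes the plane waves. Concretely, $z\mapsto e_{s,k}(z)$ obeys a functional equation $e_{s,k}(g_{1}z)=e_{s,k}(g_{1}i)\cdot e_{s,k'}(z)$ for a suitable $k'\in K$ (the first factor, a cocycle, is independent of $z$), and averaging $e_{s,k}$ over $K$-translates produces (a multiple of) the spherical function $\varphi_{1/2+is}$; combining these with the radiality of $g$ yields $\int_{\H}g(w)\,\overline{e_{s,k}(g_{1}w)}\,dw=\overline{e_{s,k}(g_{1}i)}\,\widehat g(s)$, i.e. convolution by $g$ multiplies $e_{s,k}$ by $\widehat g(s)$ (this is the $\H$-counterpart of the spherical-function computation of \cite{harish1958spherical}). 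Granting this, I would write $\widehat{f\ast g}(s,k)=\int_{\H}(f\ast g)(z)\,\overline{e_{s,k}(z)}\,dz$, unfold $f\ast g$ as a $G$-integral, interchange the two integrals (legitimate since $f,g\in L^{1}$), substitute $z\mapsto g_{1}z$, apply the diagonalization identity, and finally push the remaining $G$-integral down to $\H=G/K$ --- the integrand being right-$K$-invariant --- which leaves exactly $\widehat f(s,k)\cdot\widehat g(s)$.

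Part (1), the Plancherel formula, is the substantial one, and I would reduce it to the radial case. Decomposing $f\in L^{2}(\H)$ into its $K$-isotypic (angular Fourier) pieces, the transform $f\mapsto\widehat f$ respects the decomposition, so it suffices to prove the identity on each piece, and in particular on $K$-invariant $f$, for which $\widehat f(s,k)=\widehat f(s)=\int_{0}^{\infty}f(e^{-r}i)P_{-1/2+is}(\cosh r)\sinh r\,dr$ is the Mehler--Fock (spherical) transform and the $K$-variable carries unit mass. The Mehler--Fock Plancherel theorem I would obtain from singular Sturm--Liouville theory (Weyl--Titchmarsh--Kodaira) applied to the radial Laplacian $L=-\frac{1}{\sinh r}\frac{d}{dr}\bigl(\sinh r\,\frac{d}{dr}\bigr)$ on $(0,\infty)$: the Liouville substitution $u=(\sinh r)^{1/2}f$ turns $L$ into $-u''+Vu$ with $V(r)=\frac14-\frac{1}{4\sinh^{2}r}$, which is limit-point at $\infty$ and limit-circle at $0$, where the boundary condition singling out the solution $(\sinh r)^{1/2}P_{-1/2+is}(\cosh r)$ that stays regular on $\H$ (the Friedrichs extension) makes $L$ self-adjoint with purely absolutely continuous spectrum $[\frac14,\infty)$, $P_{-1/2+is}(\cosh r)$ being the solution regular at $0$. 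Its large-$r$ behavior $P_{-1/2+is}(\cosh r)=c(s)e^{(is-1/2)r}+c(-s)e^{(-is-1/2)r}+o(e^{-r/2})$ identifies the Harish-Chandra $c$-function $c(s)\propto\Gamma(is)/\Gamma(\tfrac12+is)$; the Kodaira spectral density is $\propto|c(s)|^{-2}$, and the reflection formulas $\Gamma(is)\Gamma(-is)=\pi/(s\sinh\pi s)$ and $\Gamma(\tfrac12+is)\Gamma(\tfrac12-is)=\pi/\cosh\pi s$ turn $|c(s)|^{-2}$ into a constant multiple of $s\tanh(\pi s)$, which after fixing normalizations is $\frac{1}{4\pi}s\tanh(\pi s)$. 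I would establish the isometry first on the dense subspace of smooth compactly supported radial functions and extend by continuity, getting surjectivity from completeness of the eigenfunction expansion; reassembling the $K$-types gives the full statement. (This is Theorem~3.2.3 of \cite{Terras2013}, which in practice I would simply cite.)

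The hard part is the spectral-density constant: pinning down the factor $\frac{1}{4\pi}$ requires an honest evaluation of $|c(s)|$ from the exact connection coefficients of $P_{-1/2+is}(\cosh r)$ at infinity, together with careful reconciliation of the measure normalizations --- Haar measure on $G$ with $\mathrm{vol}(K)=1$, the identification of $K$ with $[0,1)$, and the absence of any prefactor in the definition of $\widehat f$. The remaining ingredients --- the Fubini steps and the plane-wave functional equation in part (2), the endpoint classification for $L$, and the density/continuity passage from test functions to all of $L^{2}(\H)$ --- are routine once the $c$-function is in hand.
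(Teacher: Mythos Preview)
The paper does not prove this proposition at all: it is stated with the citation \cite[Theorem 3.2.3]{Terras2013} and used as a black box. Your parenthetical remark at the end --- that in practice one would simply cite Terras --- is exactly what the paper does, so on that level your proposal matches the paper's ``proof'' perfectly.

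Your sketch goes well beyond this, and as a proof outline it is sound: the convolution property via the plane-wave cocycle and radiality of $g$ is the standard mechanism, and the reduction of Plancherel to the radial (Mehler--Fock) case followed by Weyl--Titchmarsh--Kodaira spectral theory for the radial Laplacian, with the density read off from $|c(s)|^{-2}$, is a correct route (and essentially the one Terras and Helgason take). You are also right that the only genuinely delicate point is tracking the normalizations to land on $\frac{1}{4\pi}s\tanh(\pi s)$. None of this is needed for the paper, but nothing in your outline is wrong.
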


The Helgason-Fourier transform can be extended to compactly supported
measures on $\H$. Namely, for such a measure $\nu$, its transform
$\widehat{f}(s,k)\in C\left(\C\times K\right)$, is defined for $s\in\mathbb{C}$
and $k\in K=PSO_{2}(\mathbb{R})$ as 
\[
\widehat{\nu}(s,k)=\intop_{\mathbb{H}}\overline{\left(\text{Im}(k(z))\right)^{\frac{1}{2}+is}}d\nu,
\]
and, if the measure is $K$-invariant, its transform is independent
of $k$, and can be written as
\[
\widehat{\rho}(s)=\intop_{\H}\varphi_{\frac{1}{2}+is}(z)d\rho.
\]

We will need the following claim, which follows from Theorem \ref{prop:Plancherel-Helgason}. 
\begin{cor}
\label{cor:L2 fourier transform}Let $\nu$ be a compactly supported
distribution on $\H$, and assume that $\widehat{\nu}\in L^{2}\left(\mathbb{R}\times K,\frac{1}{4\pi}s\tanh\pi s\,dtdk\right)$.
Then $\nu$ can be represented as an $L^{2}$ function on $\H$, i.e.
there exists $f_{\nu}\in L^{2}\left(\H\right)$ such that for every
$f\in C_{c}\left(\H\right)$, $\nu\left(f\right)=\int f_{\nu}(x)f(x)dx$.
\end{cor}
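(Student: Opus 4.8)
The plan is to extract the candidate function $f_\nu$ from the Plancherel theorem and then identify it with $\nu$ by an approximate-identity argument. By Theorem~\ref{prop:Plancherel-Helgason}(1) the map $g\mapsto\widehat g$ is an isometry of $L^2(\H,d\mu)$ into $L^2\!\left(\mathbb{R}\times K,\frac{1}{4\pi}s\tanh\pi s\,ds\,dk\right)$ (in fact onto, but I will not need that). Since $\widehat\nu$ lies in the target space by hypothesis, it suffices to show that $\widehat\nu$ is in the image of this isometry; its preimage will then be the desired $f_\nu$, and the construction will make it transparent that $f_\nu$ represents $\nu$. (If one prefers to invoke the Helgason inversion formula one can instead define $f_\nu(z)$ as the $L^2$-limit of $\int_{|s|\le N}\!\int_K\widehat\nu(s,k)(\mathrm{Im}\,kz)^{1/2+is}\,\frac{1}{4\pi}s\tanh\pi s\,dk\,ds$, but the route below uses only the two parts of Theorem~\ref{prop:Plancherel-Helgason} together with elementary facts about spherical functions.)

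Fix a sequence $\psi_n\in C_c^\infty(\H)$ of nonnegative, $K$-invariant functions with $\int_\H\psi_n\,d\mu=1$ and $\mathrm{supp}(\psi_n)$ shrinking to the origin $i$, and consider the convolutions $\nu\ast\psi_n$. Each $\nu\ast\psi_n$ is smooth, with support contained in the bounded set $\mathrm{supp}(\nu)\cdot\mathrm{supp}(\psi_n)$, hence lies in $L^2(\H)$; and the convolution identity of Theorem~\ref{prop:Plancherel-Helgason}(2), which passes from $L^1$-functions to the compactly supported distribution $\nu$ by a direct computation using the $G$-equivariance (cocycle identity) of the Helgason kernel, gives $\widehat{\nu\ast\psi_n}=\widehat\nu\cdot\widehat{\psi_n}$, where $\widehat{\psi_n}(s)=\int_\H\psi_n(z)\varphi_{1/2+is}(z)\,d\mu(z)$. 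For real $s$ the spherical function is positive-definite with $\varphi_{1/2+is}(i)=1$, so $|\varphi_{1/2+is}|\le1$ on $\H$ and therefore $|\widehat{\psi_n}(s)|\le1$ for every $n$; moreover, by continuity of $\varphi_{1/2+is}$ and the choice of $\psi_n$, $\widehat{\psi_n}(s)\to1$ for each $s$. Dominated convergence (the integrands $|\widehat\nu|^2|\widehat{\psi_n}-1|^2$ are bounded by $4|\widehat\nu|^2\in L^1$ and tend pointwise to $0$) then yields $\widehat{\nu\ast\psi_n}\to\widehat\nu$ in $L^2\!\left(\mathbb{R}\times K,\frac{1}{4\pi}s\tanh\pi s\,ds\,dk\right)$. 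By the Plancherel isometry, $(\nu\ast\psi_n)_n$ is Cauchy in $L^2(\H)$, so it converges to some $f_\nu\in L^2(\H)$ with $\widehat{f_\nu}=\widehat\nu$; in particular $\widehat\nu$ is in the image of the transform, as wanted.

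It then remains to verify the representation property. For $f\in C_c^\infty(\H)$, the $L^2$-convergence $\nu\ast\psi_n\to f_\nu$ and $f\in L^2(\H)$ give, by Cauchy--Schwarz, $\int_\H(\nu\ast\psi_n)f\,d\mu\to\int_\H f_\nu f\,d\mu$. On the other hand, writing convolution on $G=PSL_2(\R)$, applying Fubini (legitimate since $\nu$ has compact support and $f,\psi_n$ are continuous with compact support), and using $\psi_n(g)=\psi_n(g^{-1})$ (because $\psi_n$ is radial), one gets $\int_\H(\nu\ast\psi_n)(x)f(x)\,d\mu(x)=\nu(f\ast\psi_n)$; and $f\ast\psi_n\to f$ in $C_c^\infty(\H)$ (supports in a fixed compact set, all derivatives converging uniformly), so $\nu(f\ast\psi_n)\to\nu(f)$. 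Comparing the two limits gives $\nu(f)=\int_\H f_\nu f\,d\mu$ for all $f\in C_c^\infty(\H)$, i.e. $\nu=f_\nu$ as distributions; since $f_\nu\in L^2(\H)\subset L^1_{\mathrm{loc}}(\H)$, the identity extends at once to all $f\in C_c(\H)$, which is the assertion.

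The two points I expect to require care are: (i) the passage of the convolution identity $\widehat{\nu\ast\psi_n}=\widehat\nu\,\widehat{\psi_n}$ from $L^1$-functions (as stated in Theorem~\ref{prop:Plancherel-Helgason}) to the compactly supported distribution $\nu$ — this rests on the transformation law of the Helgason kernel $(\mathrm{Im}\,k(gz))^{1/2+is}$ under $G$ and on the compact support of $\nu$ to justify interchanging $\nu$ with the spatial integration — and (ii) bookkeeping of complex conjugates: the claimed pairing $\int f_\nu f$ carries \emph{no} conjugate, which is precisely why it is cleaner to argue through the genuine $L^2$-approximants $\nu\ast\psi_n$ and ordinary Cauchy--Schwarz rather than through the sesquilinear Plancherel inner product. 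Everything else is routine, and (i) is the only real obstacle.
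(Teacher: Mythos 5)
Your argument is correct, and it follows the same route the paper intends: the paper offers no written proof, stating only that the corollary follows from Proposition \ref{prop:Plancherel-Helgason}, and your mollification argument (radial approximate identities $\psi_n$, the convolution property $\widehat{\nu\ast\psi_n}=\widehat{\nu}\,\widehat{\psi_n}$, the bound $|\widehat{\psi_n}|\le1$ with pointwise convergence to $1$, dominated convergence, and the Plancherel isometry to produce $f_\nu$ as an $L^2$-limit) is exactly the standard way of making that citation rigorous. The two delicate points you flag --- extending the convolution identity to compactly supported distributions and the conjugate/pairing bookkeeping, plus the final passage from $C_c^\infty$ to $C_c$ test functions via $f_\nu\in L^1_{\mathrm{loc}}$ --- are handled correctly, so nothing is missing.
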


\subsection*{The averaging operator $A_{r}$}

For $r>0,$ let $A_{r}$ denote the operator on $C(\mathbb{H})$ which
averages a function over a circle of radius $r,$ i.e., for a function
$f\in C(\mathbb{H})$ and $z\in\mathbb{H}$
\[
\left(A_{r}f\right)(z)=\intop_{K}f\left(k\left(\begin{array}{cc}
e^{r/2} & 0\\
0 & e^{-r/2}
\end{array}\right)z\right)dk.
\]
The operator $A_{r}$ is bounded and self adjoint with respect to
the $L^{2}$-norm on $L^{2}\left(\H\right)\cap C\left(\H\right)$,
so it extends to an operator $A_{r}:L^{2}\left(\H\right)\to L^{2}\left(\H\right)$,
which is also self adjoint. By duality, we may also extend $A_{r}$
to an operator on compactly supported measures on $\H$. Note that
the operator $A_{r}$ can written as a convolution with a uniform
$K$-invariant probability measure $\delta_{S_{r}}$ supported on
the double coset $K\left(\begin{array}{cc}
e^{r/2} & 0\\
0 & e^{-r/2}
\end{array}\right)K$. 

Note that the Laplace-Beltrami operator $\Delta=-y^{2}\left(\frac{\partial^{2}}{\partial x^{2}}+\frac{\partial^{2}}{\partial y^{2}}\right)$
can be written on $C^{\infty}\left(\H\right)$ as the limit
\[
\Delta=(-2)\lim_{r\to0}\frac{1}{r^{2}}\left(I-A_{r}\right),
\]
where $I$ stands for the identity operator. However, we are mainly
concerned with the behavior of $A_{r}$ when $r$ is either fixed
or approaches infinity.

The spherical functions $\varphi_{\frac{1}{2}+is}$ on $\H$ are eigenfunctions
of $\Delta$ and of $A_{r}$ for every $r>0$, namely,
\begin{align*}
\Delta\varphi_{\frac{1}{2}+is} & =\left(\frac{1}{4}+s^{2}\right)\varphi_{\frac{1}{2}+is}\\
A_{r}\varphi_{\frac{1}{2}+is} & =\varphi_{\frac{1}{2}+is}(e^{-r}i)\cdot\varphi_{\frac{1}{2}+is}.
\end{align*}
In particular, it follows from Proposition \ref{prop:Plancherel-Helgason}
that the $L^{2}$-spectrum of $\Delta$ on $\H$ is $\left[\frac{1}{4},\infty\right)$
and the $L^{2}$-spectrum of $A_{r}$ on $\H$ is the set $\left\{ \varphi_{\frac{1}{2}+is}(e^{-r}i)\mid s\in\R\right\} =\left\{ P_{-\frac{1}{2}+is}(\cosh r)\mid s\in\R\right\} $.

\subsection*{Spectrum on the Quotients and the Ramanujan condition.}

Consider the actions of $A_{r}$ and of $\Delta$ on a dense subspace
of $L_{0}^{2}\left(\Gamma\backslash\mathbb{H}\right)=\left\{ f\in L^{2}\left(\Gamma\backslash\mathbb{H}\right):\int f=0\right\} $,
where $\Gamma\subseteq PSL_{2}(\mathbb{R})$ is a lattice. In both
cases the spectrum is not necessarily discrete, but is parameterized
by the unitary dual parameter $\frac{1}{2}+is\in\mathbb{C}$. Namely,
if $\frac{1}{2}+is\in\mathbb{C}$ appears in the unitary dual of $X=\Gamma\backslash\H$,
then $P_{-\frac{1}{2}+is}(\cosh r)$ is in the spectrum of $A_{r}$
and $\frac{1}{4}+s^{2}$ is an eigenvalue of the Laplacian. It is
well known that, in general, the unitary dual is contained in the
set $\left\{ \frac{1}{2}+is\mid s\in\R\right\} \cup\left\{ \frac{1}{2}+is\mid is\in\left(-\frac{1}{2},\frac{1}{2}\right)\right\} \cup\left\{ 0,1\right\} $,
where the first set is called the principal series, the second one
is called the complementary series, and $\{0,1\}$ is called trivial.
The trivial part corresponds to the constant function on $X$. A quotient
$X=\Gamma\backslash\mathbb{H}$ is called Ramanujan if its non-trivial
unitary dual is contained solely in $\left\{ \frac{1}{2}+is\mid s\in\R\right\} $.
Equivalently, $X$ is Ramanujan iff all the non-trivial eigenvalues
of the Laplacian are greater or equal to $\frac{1}{4}$. 

Another equivalent condition of being a Ramanujan quotient is that
the subrepresentation of $G$ on $L_{0}^{2}\left(\Gamma\backslash G\right)$
generated by its $K=PSO_{2}\left(\R\right)$ fixed vectors, is tempered.
This statement can also be stated as follows. Every function $f$
on $X$ can be lifted to a $\Gamma$-invariant function $\tilde{f}$
on $\H$. Then $X$ is Ramanujan iff for every $f,f'\in L_{0}^{2}\left(X\right)$,
and for every $\epsilon>0$,
\[
\int_{G}\left|\left\langle \tilde{f},g\tilde{f}'\right\rangle \right|^{2+\epsilon}dg<\infty.
\]

\subsection*{Harish-Chandra Bounds.}
\begin{prop}
The spectrum of $A_{r}$ on $L^{2}\left(\H\right)$ is bounded by
$\left(r+1\right)e^{-r/2}$.
\end{prop}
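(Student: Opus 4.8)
The plan is to reduce the statement to a pointwise bound on a single spherical function and then prove that bound by an elementary integral estimate. As recalled in Section \ref{sec:Preliminaries-and-Notation}, the operator $A_{r}$ on $L^{2}\left(\H\right)$ is convolution by the $K$-invariant measure $\delta_{S_{r}}$, it is self-adjoint, and by the Plancherel formula (Proposition \ref{prop:Plancherel-Helgason}) together with the eigenvalue relations $A_{r}\varphi_{\frac12+is}=\varphi_{\frac12+is}(e^{-r}i)\varphi_{\frac12+is}$ its $L^{2}\left(\H\right)$-spectrum is exactly $\left\{ \varphi_{\frac12+is}(e^{-r}i)\mid s\in\R\right\} =\left\{ P_{-\frac12+is}(\cosh r)\mid s\in\R\right\}$; since $A_{r}$ is self-adjoint, its norm is the supremum of the absolute values of these numbers. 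Next I would invoke Laplace's integral representation of the Legendre function,
\[
P_{-\frac12+is}(\cosh r)=\frac{1}{\pi}\intop_{0}^{\pi}\left(\cosh r+\sinh r\cos\theta\right)^{-\frac12+is}d\theta ,
\]
and observe that the base $\cosh r+\sinh r\cos\theta$ is a positive real number (it lies in $[e^{-r},e^{r}]$), so the modulus of its $\left(-\frac12+is\right)$-th power equals $\left(\cosh r+\sinh r\cos\theta\right)^{-1/2}$, independently of $s$. Hence $\left|P_{-\frac12+is}(\cosh r)\right|\le P_{-\frac12}(\cosh r)=\varphi_{\frac12}(e^{-r}i)$ for every real $s$, and the problem is reduced to proving $\varphi_{\frac12}(e^{-r}i)\le\left(r+1\right)e^{-r/2}$. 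Conceptually this is the Harish-Chandra phenomenon that the bottom of the principal series (the $\Xi$-function) dominates all tempered spherical functions.

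To estimate $\varphi_{\frac12}(e^{-r}i)=P_{-\frac12}(\cosh r)$ I would use the identity $\cosh r+\sinh r\cos\theta=e^{r}\cos^{2}(\theta/2)+e^{-r}\sin^{2}(\theta/2)$ and the substitutions $u=\theta/2$, $t=\tan u$ to rewrite
\[
P_{-\frac12}(\cosh r)=\frac{2}{\pi}\,e^{-r/2}\intop_{0}^{\infty}\frac{dt}{\sqrt{\left(1+t^{2}\right)\left(1+e^{-2r}t^{2}\right)}} .
\]
For $r\ge1$ I would split the range as $[0,1]\cup[1,e^{r}]\cup[e^{r},\infty)$ and bound the integrand by $1$, by $1/t$, and by $e^{r}/t^{2}$ on the three pieces respectively; this gives $\intop_{0}^{\infty}\le1+r+1=r+2$, hence $\varphi_{\frac12}(e^{-r}i)\le\frac{2}{\pi}\left(r+2\right)e^{-r/2}\le\left(r+1\right)e^{-r/2}$, where the last inequality holds for all $r\ge1$ because $2(r+2)\le\pi(r+1)$ there. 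For $0\le r\le1$ I would argue separately: substituting $t=e^{r/2}u$ in the displayed integral and using $\left(1+e^{r}u^{2}\right)\left(1+e^{-r}u^{2}\right)=1+2\cosh r\,u^{2}+u^{4}\ge\left(1+u^{2}\right)^{2}$ gives $\varphi_{\frac12}(e^{-r}i)\le\intop_{0}^{\infty}\frac{du}{1+u^{2}}=1$, while $\left(r+1\right)e^{-r/2}$ is increasing on $[0,1]$ with value $1$ at $r=0$, so $\varphi_{\frac12}(e^{-r}i)\le1\le\left(r+1\right)e^{-r/2}$ on that range.

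None of these steps is a genuine obstacle; the only thing requiring care is pinning down the constant uniformly in $r$, which is why I split into $r\ge1$ and $r\le1$. The one substantive (and entirely classical) input is Laplace's representation, which simultaneously delivers the reduction $\left|P_{-\frac12+is}\right|\le P_{-\frac12}$ and a tractable integral for $P_{-\frac12}$ itself. If a shorter write-up is preferred one can instead quote the classical bounds/asymptotics for $P_{-\frac12}(\cosh r)$ (equivalently, via $P_{-\frac12}(\cosh r)=\frac{2}{\pi}e^{-r/2}K\!\left(\sqrt{1-e^{-2r}}\right)$, for the complete elliptic integral $K$), but the three-piece estimate above keeps the proof self-contained.
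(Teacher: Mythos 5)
Your argument is correct, and it reaches the stated bound by a route that differs from the paper's in its key integral representation. The paper works with the Mehler--Dirichlet form $\phi_{\frac{1}{2}+is}(r)=\frac{\sqrt{2}}{\pi}r\int_{0}^{1}\frac{\cos\left(srx\right)}{\sqrt{\cosh r-\cosh rx}}\,dx$, kills the $s$-dependence by $\left|\cos\left(srx\right)\right|\le1$, and then uses the single elementary inequality $\cosh r-\cosh\left(rx\right)\ge\left(\cosh r-1\right)\left(1-x^{2}\right)$ to get $\left|\phi_{\frac{1}{2}+is}(r)\right|\le\frac{r}{2}\left(\sinh\frac{r}{2}\right)^{-1}\le\left(r+1\right)e^{-r/2}$ uniformly in $r$, with no case distinction. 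You instead use the Laplace representation, where the $s$-dependence sits in a unimodular factor, so the whole principal series is dominated by the $s=0$ spherical function (the Harish--Chandra $\Xi$-function), and you then estimate that one function by splitting the $t$-integral and treating $r\ge1$ and $r\le1$ separately. Both proofs are elementary and self-contained; the paper's buys a cleaner closed-form intermediate bound and avoids casework, while yours makes the conceptual point $\left|P_{-\frac{1}{2}+is}(\cosh r)\right|\le P_{-\frac{1}{2}}(\cosh r)$ explicit, which is a useful structural fact in its own right (and is exactly how such bounds are organized in general rank). One small slip to fix in the write-up: in the $0\le r\le1$ case the displayed claim $\int_{0}^{\infty}\frac{du}{1+u^{2}}=1$ should carry the prefactor you dropped, i.e.\ $\frac{2}{\pi}\int_{0}^{\infty}\frac{du}{1+u^{2}}=1$; with that factor restored the conclusion $\varphi_{\frac{1}{2}}\left(e^{-r}i\right)\le1\le\left(r+1\right)e^{-r/2}$ on $\left[0,1\right]$ stands as you argued.
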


\begin{proof}
The $L^{2}-$spectrum is composed of eigenvalues of $A_{r}$ on the
principal series spherical functions, and hence is equal to $\{P_{-\frac{1}{2}+is}(\cosh r)\}_{s\in\mathbb{R}}$.
Alternatively, it can be written as the range of the function $\phi_{\frac{1}{2}+is}(r)=\frac{\sqrt{2}}{\pi}r\int_{0}^{1}\frac{\cos\left(srx\right)}{\sqrt{\cosh r-\cosh rx}}dx$,
for $s\in\mathbb{R}$ (\cite[Lemma 7]{decorte2017lower}, or \cite[Exercise 3.2.28]{Terras2013},
). Since $\cosh r-\cosh\left(rx\right)\ge\left(\cosh r-1\right)\left(1-x^{2}\right)$,
for $0\leq x\leq1,$ (which follows from the Taylor expansion of $\cosh$),
the following inequalities hold%
\begin{align*}
\left|\phi_{\frac{1}{2}+is}(r)\right| & =\frac{\sqrt{2}}{\pi}r\left|\int_{0}^{1}\frac{\cos\left(srx\right)}{\sqrt{\cosh r-\cosh rx}}dx\right|\le\frac{\sqrt{2}}{\pi}r\frac{1}{\sqrt{\cosh r-1}}\int_{0}^{1}\frac{1}{\sqrt{1-x^{2}}}dx\\
 & \le\frac{\sqrt{2}}{\pi}r\frac{1}{\sqrt{\cosh r-1}}\int_{0}^{1}\frac{1}{\sqrt{1-x^{2}}}dx=\frac{1}{\sqrt{2}}r\left(\cosh r-1\right)^{-1/2}=\frac{r}{2}\left(\sinh\frac{r}{2}\right)^{-1}\le(r+1)e^{-r/2}.
\end{align*}
\end{proof}
\begin{cor}
If $X$ is Ramanujan then the norm of $A_{r}$ on $L_{0}^{2}\left(X\right)$
is bounded by $(r+1)e^{-r/2}$ .
\end{cor}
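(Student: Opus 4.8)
The plan is to combine the Harish-Chandra bound on the $L^{2}(\mathbb{H})$-spectrum of $A_{r}$ just established with the spectral description of $A_{r}$ on the quotient recalled in the subsection on the Ramanujan condition. First I would observe that $A_{r}$ acts on $L_{0}^{2}(X)$ as a bounded self-adjoint operator, so its spectrum is a closed bounded subset of $\mathbb{R}$ and the operator norm equals $\sup\{|\lambda|:\lambda\in\mathrm{spec}(A_{r}\vert_{L_{0}^{2}(X)})\}$. Hence it suffices to bound this spectrum.

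Next I would invoke the decomposition of $L_{0}^{2}(\Gamma\backslash G)$ as a direct integral of irreducible unitary representations of $G$: only the class-one (spherical) representations contribute $K$-fixed vectors, and in each such representation with spectral parameter $\tfrac{1}{2}+is$ the operator $A_{r}$ acts on the line of $K$-fixed vectors by the scalar $\varphi_{\frac{1}{2}+is}(e^{-r}i)=P_{-\frac{1}{2}+is}(\cosh r)$. Therefore $\mathrm{spec}(A_{r}\vert_{L_{0}^{2}(X)})$ is contained in the closure of the set $\{P_{-\frac{1}{2}+is}(\cosh r):\tfrac{1}{2}+is\text{ in the non-trivial unitary dual of }X\}$.

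Now I would use the Ramanujan hypothesis: by definition the non-trivial unitary dual of $X$ is contained in the principal series $\{\tfrac{1}{2}+is:s\in\mathbb{R}\}$, so the set above is contained in $\{P_{-\frac{1}{2}+is}(\cosh r):s\in\mathbb{R}\}$, which is exactly the $L^{2}$-spectrum of $A_{r}$ on $\mathbb{H}$. By the Proposition just proved, every element of this set has absolute value at most $(r+1)e^{-r/2}$, and the same therefore holds for every element of $\mathrm{spec}(A_{r}\vert_{L_{0}^{2}(X)})$; taking the supremum yields $\|A_{r}\vert_{L_{0}^{2}(X)}\|\le(r+1)e^{-r/2}$.

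The only subtlety — hardly an obstacle — is making the direct-integral bookkeeping precise when $\Gamma$ is non-uniform, where continuous spectrum coming from Eisenstein series also appears; but the Eisenstein part lies in the principal series as well, so the same scalar computation applies and the argument is unchanged. No estimate is needed beyond the one already established on $\mathbb{H}$.
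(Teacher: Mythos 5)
Your argument is correct and matches the paper's intended reasoning: the paper states this corollary without proof, as an immediate consequence of the preceding proposition together with the spectral parametrization recalled in the preliminaries (the spectrum of $A_{r}$ on $L_{0}^{2}\left(X\right)$ consists of the values $P_{-\frac{1}{2}+is}\left(\cosh r\right)$ at unitary dual parameters, and the Ramanujan condition confines these to the principal series $s\in\R$). Your extra remark that the Eisenstein (continuous) spectrum in the non-uniform case also lies in the principal series is a valid clarification, not a deviation.
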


The inverse direction can be proven in a similar way, by analyzing
the complementary series. Let us present a more conceptual proof of
it: 
\begin{prop}
\label{prop:ArBoundsImpliesRamanujan}If for every $r\ge0$ the norm
of $A_{r}$ on $L_{0}^{2}\left(X\right)$ is bounded by $(r+1)e^{-r/2}$
then $X$ is Ramanujan.
\end{prop}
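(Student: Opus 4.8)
The plan is to prove the contrapositive: assuming $X$ is \emph{not} Ramanujan, I will show that for all sufficiently large $r$ the norm of $A_{r}$ on $L_{0}^{2}\left(X\right)$ strictly exceeds $(r+1)e^{-r/2}$, so that the hypothesized bound fails at a single (indeed cofinitely many) radius.

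First I would unpack the failure of the Ramanujan condition using the description of the unitary dual recalled in this section. The non-trivial unitary dual of $X$ is contained in the principal series $\left\{ \tfrac12+is\mid s\in\R\right\} $, the complementary series $\left\{ \tfrac12+is\mid is\in(-\tfrac12,\tfrac12)\right\} $, and the trivial set $\{0,1\}$; since $X$ is not Ramanujan and the trivial part only contributes constants, some complementary series parameter $\tfrac12+is$ with $is=\sigma\in(-\tfrac12,\tfrac12)$, $\sigma\ne0$, must occur. Using the symmetry $P_{\mu}=P_{-\mu-1}$ of the Legendre function I may assume $\sigma\in(0,\tfrac12)$. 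By the stated correspondence, $P_{-\frac12+\sigma}(\cosh r)$ then lies in the $L^{2}_{0}(X)$-spectrum of $A_{r}$ for every $r\ge0$ — it is the eigenvalue of $A_{r}$ on the corresponding spherical vector — so that, $A_{r}$ being self-adjoint,
\[
\left\Vert A_{r}\right\Vert _{L_{0}^{2}(X)}\ \ge\ P_{-\frac12+\sigma}(\cosh r)\qquad\text{for all }r\ge0.
\]

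Next I would pin down the growth of this eigenvalue. The standard large-argument asymptotics of the Legendre function give
\[
P_{-\frac12+\sigma}(\cosh r)=\bigl(c_{\sigma}+o(1)\bigr)\,e^{(\sigma-\frac12)r},\qquad r\to\infty,\quad c_{\sigma}=\frac{\Gamma(\sigma)}{\sqrt{\pi}\,\Gamma(\tfrac12+\sigma)}>0;
\]
alternatively, and more in the spirit of the preceding proposition, one substitutes $s=-i\sigma$ into the integral representation $\phi_{\frac12+is}(r)=\frac{\sqrt2}{\pi}r\int_{0}^{1}\frac{\cos(srx)}{\sqrt{\cosh r-\cosh rx}}\,dx$, observes that $\cos(srx)$ becomes $\cosh(\sigma rx)$, and extracts a lower bound of the same exponential order $e^{(\sigma-\frac12)r}$ by restricting the integral to $x$ near $1$, where the integrable singularity concentrates the mass (recall $\frac{\sqrt2}{\pi}r\int_{0}^{1}(\cosh r-\cosh rx)^{-1/2}dx=\frac r2(\sinh\tfrac r2)^{-1}$) and where $\cosh(\sigma rx)$ is of size $e^{\sigma r}$ up to constants. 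Either way, since $\sigma>0$,
\[
\frac{P_{-\frac12+\sigma}(\cosh r)}{(r+1)e^{-r/2}}\ \sim\ \frac{c_{\sigma}\,e^{\sigma r}}{r+1}\ \xrightarrow[r\to\infty]{}\ \infty,
\]
so for all large $r$ one has $\left\Vert A_{r}\right\Vert _{L_{0}^{2}(X)}>(r+1)e^{-r/2}$, contradicting the hypothesis; hence $X$ is Ramanujan.

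I expect the only genuinely delicate ingredient to be this (classical) asymptotic/lower bound for the complementary-series spherical function $P_{-\frac12+\sigma}(\cosh r)$ — the slow decay $e^{(\sigma-\frac12)r}$ rather than $e^{-r/2}$ being exactly the analytic signature of non-temperedness; the remaining steps are bookkeeping with the unitary dual already set up above. A more representation-theoretic phrasing of the same argument: temperedness of the $K$-spherical part of $L_{0}^{2}(\Gamma\backslash G)$ is equivalent to its being weakly contained in the regular representation, which forces $\left\Vert \pi(\delta_{S_{r}})\right\Vert \le\left\Vert A_{r}\right\Vert _{L^{2}(\H)}\le(r+1)e^{-r/2}$ for every constituent $\pi$, while a complementary series constituent violates this precisely because its spherical function decays like $e^{(\sigma-\frac12)r}$.
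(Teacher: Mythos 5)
Your proof is correct, but it takes a different route from the paper's. You argue by contrapositive: a failure of the Ramanujan property produces a complementary-series parameter $\sigma\in(0,\tfrac12)$ in the non-trivial unitary dual, hence the spectral value $P_{-\frac12+\sigma}(\cosh r)$ in the $L_{0}^{2}(X)$-spectrum of the self-adjoint operator $A_{r}$, and the classical asymptotics $P_{-\frac12+\sigma}(\cosh r)\asymp e^{(\sigma-\frac12)r}$ then beat $(r+1)e^{-r/2}$ for all large $r$. The paper instead gives a direct argument that never touches the complementary series: it reformulates the Ramanujan condition as temperedness of the spherical part of $L_{0}^{2}(\Gamma\backslash G)$, i.e.\ $K$-finite matrix coefficients lying in $L^{2+\epsilon}(G)$, and uses the Cartan decomposition ($dg=\sinh r\,dk'\,dk\,dr$) to rewrite this as $\int_{r\ge0}e^{r}\left|\left\langle f,A_{r}f'\right\rangle \right|^{2+\epsilon}dr<\infty$, which the assumed norm bound immediately guarantees. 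Your route is exactly the one the paper mentions but declines to write out ("by analyzing the complementary series"), and your key analytic ingredient -- the lower bound $\varphi_{t}(r)\gg_{\epsilon}e^{-r\left(\frac12-\sigma(1-\epsilon)\right)}$ obtained from the integral representation at imaginary spectral parameter -- is in fact proved later in the paper, in Equation \ref{eq:Lp pf2} inside the proof of Proposition \ref{prop:Lp proposition}, so nothing in your sketch is missing (just note that the $\cos(srx)$ representation is used at $s=-i\sigma$ by analytic continuation, as the paper does there with $\exp\left(s_{t}'rx\right)$). Comparing the two: the paper's argument avoids spherical-function asymptotics entirely and is the template that generalizes verbatim to the $L^{p}$ setting of Proposition \ref{prop:Lp proposition}; yours needs only the spectral theorem for $A_{r}$ plus one classical asymptotic, is arguably more elementary, and yields the slightly stronger conclusion that the hypothesized bound must fail for all sufficiently large $r$ (and, via Lemma \ref{lem:Lax-Phillips}, the offending spectral point is even an honest eigenfunction, though your spectral-radius argument does not need this).
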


\begin{proof}
Recall that the condition on the Laplacian is equivalent to the fact
that the subrepresentation of $G$ on $L_{0}^{2}\left(\Gamma\backslash G\right)$
with $K=PSO_{2}\left(\R\right)$ fixed vectors, is tempered. Consider
the Cartan decomposition $G=\cup_{r\ge0}K\left(\begin{array}{cc}
e^{r} & 0\\
0 & e^{-r}
\end{array}\right)K$, which corresponds to the polar coordinates in $\H.$ The metric
on the group in this coordinates reads as $dg=\sinh rdk'dkdr$. Let
$f,f'\in L_{0}^{2}\left(X\right)$ and let $\tilde{f},\tilde{f}'\in L_{0}^{2}\left(\Gamma\backslash G\right)$
be their lifts. Then 
\begin{align*}
\int_{G}\left|\left\langle \tilde{f},g\tilde{f}'\right\rangle \right|^{2+\epsilon}dg & =\int_{r\ge0}\sinh r\left|\left\langle \tilde{f},\left(\begin{array}{cc}
e^{r} & 0\\
0 & e^{-r}
\end{array}\right)\tilde{f}'\right\rangle \right|^{2+\epsilon}dr=\\
 & =\int_{r\ge0}\sinh r\left|\left\langle f,A_{r}f'\right\rangle \right|^{2+\epsilon}dr
\end{align*}
Using the fact that for $r$ large $\sinh r\asymp e^{r}$, we see
that the Ramanujan condition is equivalent to the condition:
\begin{itemize}
\item For every $f,f'\in L_{0}^{2}\left(X\right)$ and for every $\epsilon>0$,$\int_{r\ge0}e^{r}\left|\left\langle f,A_{r}f'\right\rangle \right|^{2+\epsilon}dr<\infty$
\end{itemize}
If the inequality holds then $\left|\left\langle f,A_{r}f'\right\rangle \right|\le(r+1)e^{-r/2}\left|\left\langle f,f'\right\rangle \right|$
for every positive $r$, so 
\begin{align*}
\int_{r\ge0}e^{r}\left|\left\langle f,A_{r}f'\right\rangle \right|^{2+\epsilon}dr & \le\int_{r\ge0}e^{r}e^{\left(-1-\epsilon/2\right)r}(r+1)^{2+\epsilon}\left|\left\langle f,f'\right\rangle \right|^{2+\epsilon}dr=\\
 & =\left|\left\langle f,f'\right\rangle \right|^{2+\epsilon}\int_{r\ge0}(r+1)^{2+\epsilon}e^{-\epsilon r}dr<\infty,
\end{align*}
and the proposition follows.%
\end{proof}

\section{\label{sec:Proof-of-Theorem1}Proof of Theorem \ref{thm1}}
\begin{proof}
of Theorem \ref{thm1}. Let $r\le R_{X}-\gamma\ln\left(R_{X}\right)$.
The measure of $Y_{<}=\left\{ x\in X:d\left(x,x_{0}\right)<r\right\} $
is at most the volume of the ball of radius $r$ in the hyperbolic
plane, i.e.,
\begin{align}
\mu\left(Y_{<}\right) & \le\mu\left(B_{r}\right)\ll e^{r}=e^{R_{X}}e^{-\gamma\ln\left(R_{X}\right)}\ll\mu\left(X\right)R_{X}^{-\gamma},\label{eq:lower bound}
\end{align}
which implies the lower bound of the theorem (note that we assume
that $\mu\left(X\right)\asymp e^{R_{X}}$ since $R_{X}\ge1$).

Now let $r'=R_{X}+\gamma\ln\left(R_{X}\right)-r_{0}$, and $Y_{>}=\left\{ x\in X:d_{X}\left(y,x_{0}\right)>r'\right\} $.
Let $b_{x_{0},r_{0}}$ be the characteristic function of $B_{x_{0}}\left(r_{0}\right)\subset X$,
normalized as follows: 
\[
b_{x_{0},r_{0}}(x)=\begin{cases}
1/\mu\left(B_{r_{0}}\right), & x\in B_{x_{0}}(r_{0});\\
0, & x\not\in B_{x_{0}}(r_{0}).
\end{cases}
\]
It is well defined since $x_{0}$ has injectivity radius at least
$r_{0}$. Then $Y_{>}\subset Z$ where $Z=\left\{ x\in X:\;A_{r^{\prime}}b_{x_{0},r_{0}}\left(x\right)=0\right\} $.
Denote $\pi\in L^{2}(X)$ the constant function with $\pi(x)=1/\mu(X)$
for every $x\in X$. For every point $x\in Z$, one has $\left|\left(A_{r^{\prime}}b_{x_{0},r_{0}}-\pi\right)(x)\right|=\pi(x)=\frac{1}{\mu\left(X\right)}$,
so $\mu\left(Z\right)\mu^{-2}\left(X\right)\le\nn{A_{r^{\prime}}b_{x_{0},r_{0}}-\pi}^{2}$.
Therefore $\mu\left(Y_{>}\right)\le\mu\left(Z\right)\le\mu^{2}\left(X\right)\nn{A_{r_{n}^{\prime}}b_{x_{0},r_{0}}-\pi}^{2}$.

Since $b_{x_{0},r_{0}}-\pi\perp\pi$ in the space $L^{2}(X)$, it
holds that 
\[
\nn{b_{x_{0},r_{0}}-\pi}\le\nn{b_{x_{0},r_{0}}}=\mu\left(B_{r_{0}}\right)^{-1/2}\ll_{r_{0}}1.
\]
The bounds on the norm of $A_{r'}$ of Proposition \ref{prop:bounds on sepctrum}
imply the following inequality
\begin{align*}
\nn{A_{r^{\prime}}b_{x_{0},r_{0}}-\pi} & =\nn{A_{r^{\prime}}\left(b_{x_{0},r_{0}}-\pi\right)}\le\left(r'+1\right)e^{-r^{\prime}/2}\nn{b_{x_{0},r_{0}}-\pi}\\
 & \ll_{r_{0}}\left(\frac{R_{X}+\gamma\ln\left(R_{X}\right)-r_{0}+1}{R_{X}}\right)R_{X}e^{-\frac{1}{2}R_{X}-\frac{1}{2}\gamma\ln\left(R_{X}\right)+\frac{1}{2}r_{0}}\\
 & \ll_{r_{0}}\left(1+\gamma\right)e^{-R_{X}/2}R_{X}^{1-\gamma/2}\ll\left(1+\gamma\right)\mu\left(X\right)^{-1/2}R_{X}^{1-\gamma/2}.
\end{align*}
And he following inequality completes the proof
\[
\mu\left(Y_{>}\right)\le\mu^{2}\left(X\right)\nn{A_{r^{\prime}}b_{x_{0},r_{0}}-\pi}^{2}\ll_{r_{0}}\mu\left(X\right)\left(1+\gamma^{2}\right)R_{X}^{2-\gamma}.
\]
\end{proof}

\section{\label{sec:The-Random-Walk}Deviations of the Random Walk}

Let $r_{1}>0$ be fixed. Consider the random walk on $\H$, emanating
from $z_{0}=i$ and having $z_{k+1}$ equidistributed on the sphere
of radius $r_{1}$ around $z_{k}$ . In other words, $z_{k}$ distributes
according to the measure $A_{r_{1}}^{k}\delta_{z_{0}}$, where $\delta_{z_{0}}$
is the Dirac delta-measure at $z_{0}$. Write $z_{k}=x_{k}+y_{k}i$
for $k\in\N\cup\{0\}$.

Recall that in the upper half-plane model, the points at infinity
of $\H$ are $\R\cup\left\{ \infty\right\} $. In the following lemma
we show that the random walk $A_{r_{1}}^{k}\delta_{z_{0}}$ moves
away from $\infty$ at a constant speed.
\begin{lem}
Let $f:\left[0,\pi\right]\to\left[-1,1\right]$ be the function $f(\theta)=-\frac{1}{r_{1}}\ln\left(e^{r_{1}}\cos^{2}\theta+e^{-r_{1}}\sin^{2}\theta\right)$.
Let $m$ be the uniform probability measure on $\left[0,\pi\right]$
and let $\nu=f^{*}m$ be the induced probability measure on $\left[-1,1\right]$
(i.e. for $A\subset\left[-1,1\right]$, $\nu(A)=m\left(f^{-1}\left(A\right)\right)$).
Then $\frac{1}{r_{1}}\ln(y_{k})$ distributes according to $\nu*\nu*...*v$
($k$ times). In other words, $\ln\left(y_{k}\right)=\ln\left(y_{k-1}\right)+r_{1}Y$,
where $Y$ is a random variable, independent from $y_{k-1}$, that
distributes according to $\nu$.
\end{lem}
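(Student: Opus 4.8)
The plan is to track a single step of the random walk in the upper half-plane and compute the law of the multiplicative increment of the imaginary coordinate. Since $G = PSL_2(\R)$ acts by isometries and the step distribution $A_{r_1}$ is $K$-invariant, it suffices by homogeneity to analyze one step starting from an arbitrary point $z_{k-1} = x_{k-1} + y_{k-1} i$; the increment $\ln(y_k) - \ln(y_{k-1})$ should depend only on $r_1$ and the random rotation angle, not on $z_{k-1}$. So first I would reduce, using the $PSL_2(\R)$-action, to understanding the quantity $\mathrm{Im}\left(k \cdot \left(\begin{smallmatrix} e^{r_1/2} & 0 \\ 0 & e^{-r_1/2}\end{smallmatrix}\right) z\right)/\mathrm{Im}(z)$ as $k$ ranges over $K = PSO_2(\R)$ with Haar measure, evaluated at a convenient base point.

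Second, I would do the explicit $2\times 2$ matrix computation. Conjugating appropriately (or just taking $z = i$, which is legitimate by the homogeneity in the first step), a rotation by angle $\theta$ is $\left(\begin{smallmatrix} \cos\theta & \sin\theta \\ -\sin\theta & \cos\theta\end{smallmatrix}\right)$, and composing with the hyperbolic element $a_{r_1} = \left(\begin{smallmatrix} e^{r_1/2} & 0 \\ 0 & e^{-r_1/2}\end{smallmatrix}\right)$ and applying the M\"obius action to $i$, the formula $\mathrm{Im}\left(\frac{az+b}{cz+d}\right) = \frac{\mathrm{Im}(z)}{|cz+d|^2}$ gives $\mathrm{Im}(g\cdot i) = 1/|ci+d|^2$ with $(c,d)$ the bottom row of $a_{r_1} k_\theta$ (or $k_\theta a_{r_1}$, depending on convention — I'd fix the convention so that the angle is the uniformly chosen one). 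Reading off $c$ and $d$, one gets $|ci+d|^2 = e^{r_1}\cos^2\theta + e^{-r_1}\sin^2\theta$ (possibly with $\theta$ shifted by a constant, which doesn't matter since $m$ is rotation-invariant on $[0,\pi]$ — and one should note $\sin^2$, $\cos^2$ have period $\pi$, matching the interval $[0,\pi]$). Hence $\ln y_k - \ln y_{k-1} = -\ln\left(e^{r_1}\cos^2\theta + e^{-r_1}\sin^2\theta\right) = r_1 f(\theta)$, which is exactly the claimed increment; note $e^{r_1}\cos^2\theta + e^{-r_1}\sin^2\theta$ lies between $e^{-r_1}$ and $e^{r_1}$, so $f$ indeed maps into $[-1,1]$.

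Third, I would assemble the conclusion: by the Markov property, the increments at successive steps are i.i.d., each distributed as $r_1 Y$ with $Y \sim \nu = f_* m$ by definition of pushforward measure, so $\tfrac{1}{r_1}\ln y_k = \sum_{j=1}^k Y_j$ is a sum of $k$ i.i.d. copies of $Y$, whose law is the $k$-fold convolution $\nu * \cdots * \nu$. The independence of the $k$-th increment from $y_{k-1}$ (and hence from $y_1, \dots, y_{k-1}$) is exactly the statement that the one-step kernel $A_{r_1}$, written in the $(\ln y)$-coordinate, is a convolution kernel — which is what the homogeneity argument of step one establishes.

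The only genuinely delicate point is the bookkeeping in step two: getting the M\"obius action, the order of multiplication $k_\theta a_{r_1}$ versus $a_{r_1} k_\theta$, and the role of the base point exactly right, so that the random angle in the walk (rotate, then step) corresponds precisely to the $\theta$ appearing in $f$, up to an irrelevant shift that is absorbed by the rotation-invariance of $m$ on $[0,\pi]$. Everything else is routine: the reduction is pure homogeneity of $G/K$, and the final assembly is the definition of convolution of measures together with the Markov property.
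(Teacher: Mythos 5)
Your computation at the base point $i$ and the final i.i.d.-increments/convolution assembly coincide with the paper's proof, but the reduction step --- the only place where something actually has to be argued --- is justified too loosely, and as stated it has a real gap. You appeal to ``pure homogeneity of $G/K$'': since $G=PSL_{2}(\R)$ acts transitively by isometries and the step distribution is $K$-invariant, the law of the increment $\ln y_{k}-\ln y_{k-1}$ ``should'' not depend on $z_{k-1}$. But the functional $z\mapsto\ln\left(\text{Im}(z)\right)$ is not equivariant under general isometries, so moving $z_{k-1}$ to $i$ by an arbitrary $g\in G$ says nothing about $\text{Im}$-increments: for instance $z\mapsto-1/z$ fixes $i$ yet completely scrambles the imaginary part. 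Indeed, the very same homogeneity argument would ``prove'' that the law of the real-part increment $x_{k}-x_{k-1}$ is independent of the starting point, which is false (it scales like $y_{k-1}$, as one sees by conjugating the step at $i$ by a dilation). So transitivity of $G$ plus $K$-invariance of the step cannot by themselves be the reason, and the claim that the one-step kernel is a convolution kernel in the $\ln y$-coordinate is exactly what still needs proof.

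The missing ingredient --- which is precisely the second half of the paper's proof --- is to transport the computation from $i$ to a general point using only isometries fixing the boundary point $\infty$: the horizontal translations $\left(\begin{array}{cc}1 & s\\0 & 1\end{array}\right)$, which leave $\text{Im}$ unchanged, and the dilations $\left(\begin{array}{cc}e^{t/2} & 0\\0 & e^{-t/2}\end{array}\right)$, which multiply $\text{Im}$ by $e^{t}$. This upper-triangular subgroup acts transitively on $\H$ and transforms $\ln\left(\text{Im}\right)$ by an additive constant, so it carries $A_{r_{1}}\delta_{i}$ to $A_{r_{1}}\delta_{z}$ while preserving the law of the $\ln\left(\text{Im}\right)$-increment; this is also what gives the independence of the increment from the past trajectory needed for the convolution statement. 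With that substitution, your step two (where you correctly sort out the $k_{\theta}a_{r_{1}}$ versus $a_{r_{1}}k_{\theta}$ bookkeeping and obtain $e^{r_{1}}\cos^{2}\theta+e^{-r_{1}}\sin^{2}\theta$ up to an irrelevant shift of $\theta$) and step three go through, and the argument becomes the same as the paper's.
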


\begin{proof}
One should show that for a given point $z\in\H$, the logarithm of
the imaginary part of the measure $A_{r_{1}}\delta_{z}$ is distributed
according to $\ln\left(\text{Im}z'\right)=\ln\left(\text{Im}z\right)-\ln\left(e^{r_{1}}\cos^{2}\theta+e^{-r_{1}}\sin^{2}\theta\right)$,
for $0\le\theta\le\pi$ equidistributed.

In the case of $z=i$, the sphere of radius $r_{1}$ around $z$ can
be parameterized as 
\[
S_{r_{1}}\left(i\right)=\left\{ \left(\begin{array}{cc}
e^{r_{1}/2}\sin\theta & e^{-r_{1}/2}\cos\theta\\
-e^{r_{1}/2}\cos\theta & e^{-r_{1}/2}\sin\theta
\end{array}\right)\cdot i=\frac{i+\sin\theta\cos\theta\left(e^{-r_{1}}-e^{r_{1}}\right)}{e^{2r_{1}}\cos^{2}\theta+\sin^{2}\theta}\mid0\le\theta<\pi\right\} .
\]
The logarithm of the imaginary part $y'$ of $z'=A_{r_{1}}z$ distributes
according to $\ln(y')=-\ln\left(e^{r_{1}}\cos^{2}\theta+e^{-r_{1}}\sin^{2}\theta\right)$.

To prove the claim for points other than $z=i$, notice that an isometry
$g$ of $\H$ maps $A_{r_{1}}\delta_{z}$ to $A_{r_{1}}\delta_{g\cdot z}$.
Since the action of $\left(\begin{array}{cc}
1 & s\\
0 & 1
\end{array}\right)$, $s\in\R$ does not change the imaginary coordinate of a point and
maps a point $z\in\H$ to $z'=z+s$, if the claim holds for $z$,
it also holds for $z+s$. Similarly, the action of $\left(\begin{array}{cc}
e^{t/2} & 0\\
0 & e^{-t/2}
\end{array}\right)$, $t\in\R$, maps a point $z$ to $z'=e^{t}z$, and, in particular,
multiplies its imaginary coordinate by $e^{t}$, hence if the claim
is true for $z$, it is true for $e^{t}z$ as well. Therefore it is
holds for every point $z\in\H$.
\end{proof}
\begin{cor}
\label{cor:Central Limit thereom for iwasawa}The random variables
$\sqrt{k}^{-1}\left(k^{-1}r_{1}\ln(y_{k})+\alpha_{r_{1}}\right)$
converges in distribution to the normal distribution $N(0,\sigma_{r_{1}}^{2})$
, where
\begin{align*}
\alpha_{r_{1}} & =\frac{1}{\pi r_{1}}\int_{0}^{\pi}\ln\left(e^{r_{1}}\cos^{2}\theta+e^{-r_{1}}\sin^{2}\theta\right)d\theta,\\
\sigma_{r_{1}}^{2} & =\frac{1}{\pi}\int_{0}^{\pi}\left(\frac{1}{r_{1}}\ln\left(e^{r_{1}}\cos^{2}\theta+e^{-r_{1}}\sin^{2}\theta\right)-\alpha_{r_{1}}\right)^{2}d\theta.
\end{align*}
Also, these numbers satisfies $0<\alpha_{r_{1}}<1$ and that $\sigma_{r_{1}}^{2}\le4$. 

Moreover, the Hoeffding inequality holds: there exist $c>0$ such
that for every $\lambda\ge0$ and $k\ge0$ 
\[
\text{Pr}\left(\left|\ln(y_{k})+\alpha_{r}kr_{1}\right|\ge\lambda r_{1}\sqrt{k}\right)\ll e^{-c\lambda^{2}}.
\]
\end{cor}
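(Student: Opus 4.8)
The whole statement reduces to classical facts about sums of bounded i.i.d.\ random variables, so the plan is to set up that reduction cleanly and then quote the relevant black boxes. By the preceding lemma, $\tfrac{1}{r_{1}}\ln(y_{k})$ has the law of $S_{k}:=Y_{1}+\dots+Y_{k}$, where $Y_{1},Y_{2},\dots$ are i.i.d.\ copies of $Y:=f(\Theta)$ with $\Theta$ uniform on $[0,\pi]$ and $f(\theta)=-\tfrac{1}{r_{1}}\ln(e^{r_{1}}\cos^{2}\theta+e^{-r_{1}}\sin^{2}\theta)$. First I would note that $e^{r_{1}}\cos^{2}\theta+e^{-r_{1}}\sin^{2}\theta$ is a convex combination of $e^{r_{1}}$ and $e^{-r_{1}}$, hence lies in $[e^{-r_{1}},e^{r_{1}}]$, so $Y$ takes values in $[-1,1]$; and $Y$ is non-constant ($f(\pi/2)=1\neq-1=f(0)$, using $r_{1}>0$). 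A direct computation of moments against the uniform measure on $[0,\pi]$ gives $\mathbb{E}[Y]=\tfrac1\pi\int_{0}^{\pi}f=-\alpha_{r_{1}}$ and $\mathrm{Var}(Y)=\mathbb{E}\big[(Y+\alpha_{r_{1}})^{2}\big]=\sigma_{r_{1}}^{2}$, matching the definitions in the statement; in particular $0<\sigma_{r_{1}}^{2}\le\mathbb{E}[Y^{2}]\le 1\le 4$.

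With this reduction in hand, the convergence in distribution is the Lindeberg--L\'evy central limit theorem for the i.i.d.\ sequence $(Y_{j})$ (finite and strictly positive variance): $\tfrac{1}{\sqrt{k}}\big(S_{k}-k\mathbb{E}[Y]\big)=\tfrac{1}{\sqrt{k}}\big(\tfrac{1}{r_{1}}\ln(y_{k})+k\alpha_{r_{1}}\big)\Rightarrow N(0,\sigma_{r_{1}}^{2})$, which is the asserted convergence after the evident rescaling. For the exponential tail bound, since each $Y_{j}$ is supported in an interval of length $2$, Hoeffding's inequality gives $\mathrm{Pr}\big(|S_{k}-k\mathbb{E}[Y]|\ge t\big)\le 2\exp\!\big(-t^{2}/(2k)\big)$; taking $t=\lambda\sqrt{k}$ and multiplying the event through by $r_{1}$ yields $\mathrm{Pr}\big(|\ln(y_{k})+\alpha_{r_{1}}kr_{1}|\ge\lambda r_{1}\sqrt{k}\big)\le 2e^{-\lambda^{2}/2}\ll e^{-c\lambda^{2}}$ with $c=\tfrac12$.

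It remains to check $0<\alpha_{r_{1}}<1$. The upper bound is immediate: $e^{r_{1}}\cos^{2}\theta+e^{-r_{1}}\sin^{2}\theta<e^{r_{1}}$ for $\theta\notin\{0,\pi\}$, so $f>-1$ a.e., hence $\alpha_{r_{1}}=-\mathbb{E}[Y]<1$. The lower bound --- equivalently, that the walk genuinely drifts towards the real axis --- is the one substantive computation, and I would obtain it by symmetrizing. Writing $g(\theta)=e^{r_{1}}\cos^{2}\theta+e^{-r_{1}}\sin^{2}\theta$, periodicity gives $\int_{0}^{\pi}\ln g=2\int_{0}^{\pi/2}\ln g$, and the substitution $\theta\mapsto\pi/2-\theta$ in one of the two copies turns this into $\int_{0}^{\pi/2}\ln\!\big(g(\theta)\,g(\pi/2-\theta)\big)\,d\theta$. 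Expanding, $g(\theta)\,g(\pi/2-\theta)=1+(e^{r_{1}}-e^{-r_{1}})^{2}\cos^{2}\theta\sin^{2}\theta$, which is $\ge 1$ and strictly $>1$ on $(0,\pi/2)$; hence $\int_{0}^{\pi}\ln g>0$, i.e.\ $\alpha_{r_{1}}>0$. The only place where any real work is needed is this last symmetrization identity --- everything else is boundedness of $f$ together with the textbook CLT and Hoeffding bound for i.i.d.\ sums.
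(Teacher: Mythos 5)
Your proposal is correct and takes essentially the same approach as the paper: both reduce the statement, via the preceding lemma, to the classical central limit theorem and Hoeffding's inequality for sums of i.i.d.\ random variables bounded in $[-1,1]$. The only cosmetic difference is the bound $\alpha_{r_{1}}>0$, which the paper gets in one line from concavity of the logarithm (namely $\ln\left(e^{r_{1}}\cos^{2}\theta+e^{-r_{1}}\sin^{2}\theta\right)\ge r_{1}\cos2\theta$, which integrates to zero over $[0,\pi]$ with strict inequality away from the endpoints), whereas you verify it with the equally valid symmetrization identity $g(\theta)g(\pi/2-\theta)=1+\left(e^{r_{1}}-e^{-r_{1}}\right)^{2}\cos^{2}\theta\sin^{2}\theta$.
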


\begin{proof}
The statement is a direct application of the central limit theorem
and Hoeffding's inequality for independent bounded random variables.
The expectancy is equal to $\alpha_{r_{1}}$ and the variance is equal
to $\sigma_{r_{1}}$. The fact that $0<\alpha_{r_{1}}<1$ follows
from the fact that logarithm is a concave function.
\end{proof}
The random walk operator $A_{r_{1}}$ commutes with the action by
isometries on $\H$. The stabilizer of $i$ acts transitively on the
points at infinity of $\H$. Therefore, just as the random walk $A_{r_{1}}^{k}\delta_{z_{0}}$
moves away from $\infty$, it moves away from any other point at infinity. 
\begin{cor}
\label{cor:CLT for any point}Let $g\in G$ be an isometry of $\H$
fixing $i$, then Corollary \ref{cor:Central Limit thereom for iwasawa}
holds if we replace $y_{k}=\text{Im}z_{k}$ by $\text{Im}\left(g\cdot z_{k}\right)$,
i.e., $\sqrt{k}^{-1}\left(k^{-1}r_{1}\ln\left(\text{Im}\left(g\cdot z_{k}\right)\right)+\alpha_{r_{1}}\right)$
converges in distribution to the normal distribution $N(0,\sigma_{r_{1}}^{2})$
with $\alpha_{1}$ and $\sigma_{1}^{2}$ as in Corollary \ref{cor:Central Limit thereom for iwasawa}.
\end{cor}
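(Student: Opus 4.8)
The plan is to reduce the statement directly to Corollary \ref{cor:Central Limit thereom for iwasawa} by showing that the random variable $g\cdot z_{k}$ has the same law as $z_{k}$ itself, so that $\text{Im}\left(g\cdot z_{k}\right)$ and $y_{k}=\text{Im}\,z_{k}$ are equal in distribution and every probabilistic conclusion (the central limit theorem and the Hoeffding bound) transfers verbatim, with the same constants $\alpha_{r_{1}}$ and $\sigma_{r_{1}}^{2}$.

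First I would record the commutation of $A_{r}$ with the isometry action. For an isometry $g$ of $\H$ and $f\in C(\H)$ one has $\left(A_{r}f\right)\circ g=A_{r}\left(f\circ g\right)$, because $g$ carries the uniform probability measure on the circle of radius $r$ around $z$ to the uniform probability measure on the circle of radius $r$ around $gz$ (isometries preserve both circles and the rotation-invariant measure on them). Dualizing this identity gives $g_{*}\left(A_{r}\nu\right)=A_{r}\left(g_{*}\nu\right)$ for every compactly supported measure $\nu$, and iterating, $g_{*}\left(A_{r_{1}}^{k}\nu\right)=A_{r_{1}}^{k}\left(g_{*}\nu\right)$. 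Since $g$ fixes $i=z_{0}$, we get $g_{*}\left(A_{r_{1}}^{k}\delta_{z_{0}}\right)=A_{r_{1}}^{k}\left(g_{*}\delta_{z_{0}}\right)=A_{r_{1}}^{k}\delta_{g z_{0}}=A_{r_{1}}^{k}\delta_{z_{0}}$, so $g\cdot z_{k}$ and $z_{k}$ have the same distribution; hence $\text{Im}\left(g\cdot z_{k}\right)$ and $y_{k}$ have the same distribution, and Corollary \ref{cor:Central Limit thereom for iwasawa} applied to $y_{k}$ finishes the proof.

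There is essentially no technical obstacle here; the only point worth stressing is conceptual. Because the stabilizer $K$ of $i$ acts transitively on the boundary $\partial\H=\R\cup\left\{ \infty\right\} $, as $g$ ranges over all isometries fixing $i$ the functions $z\mapsto\text{Im}\left(g\cdot z\right)$ realize — up to the harmless substitution $g\leftrightarrow g^{-1}$ — the horocyclic (Busemann-type) coordinate toward every boundary point of $\H$. Thus the corollary expresses that the random walk $A_{r_{1}}^{k}\delta_{z_{0}}$ escapes every point at infinity of $\H$ at the same linear rate $\alpha_{r_{1}}$, with fluctuations of order $\sqrt{k}$; this is exactly the input needed in Section \ref{sec:Proof-of-Theorem2} to localize $A_{r_{1}}^{k}\delta_{x_{0}}$ at hyperbolic distance $\sim\alpha_{r_{1}}kr_{1}$ from $x_{0}$.
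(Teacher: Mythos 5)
Your proof is correct and matches the paper's own (very brief) justification: the paper likewise invokes the commutation of $A_{r_{1}}$ with isometries and the fact that $g$ fixes the starting point $i$, so that the law of $z_{k}$ is invariant under $g$ and the conclusion of Corollary \ref{cor:Central Limit thereom for iwasawa} transfers verbatim to $\text{Im}\left(g\cdot z_{k}\right)$. Your spelling-out of $g_{*}\left(A_{r_{1}}^{k}\delta_{z_{0}}\right)=A_{r_{1}}^{k}\delta_{z_{0}}$ is exactly the intended argument, and it correctly covers the later application to $g:z\mapsto-1/z$ in Lemma \ref{lem:bounding x}.
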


In the following Lemma, we make a particular use of the above Corollary
for the isometry $g:z\mapsto-1/z$.
\begin{lem}
\label{lem:bounding x}There exists $c>0$ such that $\text{Pr}\left(x_{k}^{2}\ge\exp\left(\lambda r_{1}k^{1/2}\right)\right)\ll e^{-c\lambda^{2}}$
for all $\lambda>0$ and $k\ge0$.
\end{lem}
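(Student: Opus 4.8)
The plan is to deduce Lemma \ref{lem:bounding x} from Corollary \ref{cor:CLT for any point} applied to the specific isometry $g\colon z\mapsto -1/z$, which fixes $i$ and is an element of $K=PSO_2(\mathbb{R})$. First I would compute $\operatorname{Im}(g\cdot z_k)$ explicitly: if $z_k = x_k + y_k i$, then $-1/z_k = -\bar z_k/|z_k|^2 = (-x_k + y_k i)/(x_k^2+y_k^2)$, so
\[
\operatorname{Im}\!\left(g\cdot z_k\right) = \frac{y_k}{x_k^2+y_k^2}.
\]
Hence $x_k^2 + y_k^2 = y_k/\operatorname{Im}(g\cdot z_k)$, and in particular
\[
x_k^2 \le \frac{y_k}{\operatorname{Im}(g\cdot z_k)} = \exp\!\Big(\ln(y_k) - \ln\operatorname{Im}(g\cdot z_k)\Big).
\]

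Next I would control each of the two logarithms using Corollary \ref{cor:CLT for any point} together with its Hoeffding-type tail bound. The corollary (with $g=\mathrm{id}$) gives that with probability $\gg 1 - e^{-c\lambda^2}$ one has $\ln(y_k) \le -\alpha_{r_1} k r_1 + \lambda r_1 \sqrt{k}/2 \le \lambda r_1\sqrt{k}/2$, since $\alpha_{r_1}>0$ makes the drift term only help us (for the upper bound on $\ln y_k$). Applying the corollary to the isometry $g\colon z\mapsto -1/z$ gives, with probability $\gg 1 - e^{-c\lambda^2}$, that $\ln\operatorname{Im}(g\cdot z_k) \ge -\alpha_{r_1}kr_1 - \lambda r_1\sqrt{k}/2$, hence $-\ln\operatorname{Im}(g\cdot z_k) \le \alpha_{r_1}kr_1 + \lambda r_1\sqrt{k}/2$. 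Adding these naively would produce a term linear in $k$ of the wrong sign, so I cannot simply combine the two tail events at the crude level; instead I would observe that $\ln(y_k)$ and $\ln\operatorname{Im}(g\cdot z_k)$ are \emph{negatively} correlated in the sense that $\ln(y_k) - \ln\operatorname{Im}(g\cdot z_k) = \ln(x_k^2+y_k^2)$, and that what one really wants is a tail bound directly on the single random variable $\ln(x_k^2+y_k^2)$.

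The cleaner route, which I expect to be the intended one, is to apply the Hoeffding bound of Corollary \ref{cor:CLT for any point} to $g\colon z\mapsto -1/z$ in the form: with probability $\gg 1 - e^{-c\lambda^2}$,
\[
\big|\ln\operatorname{Im}(g\cdot z_k) + \alpha_{r_1} k r_1\big| \le \lambda r_1 \sqrt{k},
\]
and likewise $|\ln(y_k)+\alpha_{r_1}kr_1|\le \lambda r_1\sqrt{k}$ from the $g=\mathrm{id}$ case. On the intersection of these two events (still of probability $\gg 1-e^{-c\lambda^2}$ after adjusting $c$), the two $-\alpha_{r_1}kr_1$ drift terms cancel in the difference:
\[
\ln(x_k^2+y_k^2) = \ln(y_k) - \ln\operatorname{Im}(g\cdot z_k) \le 2\lambda r_1\sqrt{k},
\]
so $x_k^2 \le x_k^2 + y_k^2 \le \exp(2\lambda r_1\sqrt{k})$. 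Rescaling $\lambda$ by a factor of $2$ (absorbing it into the implied constants and into $c$) gives exactly $\operatorname{Pr}(x_k^2 \ge \exp(\lambda r_1 k^{1/2})) \ll e^{-c\lambda^2}$, as required.

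The main obstacle is conceptual rather than computational: one must notice that the correct way to use Corollary \ref{cor:CLT for any point} is to take the \emph{difference} of the two imaginary-part logarithms (so that the linear-in-$k$ drift terms cancel) rather than to bound $x_k^2$ and $y_k^2$ separately. Once that observation is made, the computation of $\operatorname{Im}(-1/z_k)$ and the union bound over the two tail events are entirely routine. A minor point to keep track of is that the Hoeffding constant $c$ and the implied constants may change by absolute factors when taking a union bound over two events and when rescaling $\lambda$, but since the statement only claims the existence of \emph{some} $c>0$ this causes no difficulty.
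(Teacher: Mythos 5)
Your proposal is correct and follows essentially the same route as the paper: both apply the Hoeffding-type bound of Corollary \ref{cor:CLT for any point} to the identity and to $g\colon z\mapsto -1/z$ (noting $\operatorname{Im}(g\cdot z_k)=y_k/(x_k^2+y_k^2)$), intersect the two tail events so that the drift terms $\alpha_{r_1}kr_1$ cancel in the difference $\ln(x_k^2+y_k^2)=\ln(y_k)-\ln\operatorname{Im}(g\cdot z_k)$, and then bound $x_k^2\le x_k^2+y_k^2$ with a rescaling of $\lambda$. No substantive differences.
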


\begin{proof}
By Corollary \ref{cor:Central Limit thereom for iwasawa} there exists
$c_{0}>0$ such that 
\[
\text{Pr}\left(\left|\ln(y_{k})+\alpha_{r_{1}}kr_{1}\right|\ge\lambda r_{1}\sqrt{k}\right)\ll e^{-c_{0}\lambda^{2}}.
\]
By Corollary \ref{cor:CLT for any point} applied for $-\text{Im}z_{k}^{-1}=\frac{y_{k}}{x_{k}^{2}+y_{k}^{2}}$,
there exists $c_{1}>0$ such that 
\[
\text{Pr}\left(\left|\ln\left(\frac{y_{k}}{x_{k}^{2}+y_{k}^{2}}\right)+r_{1}\alpha_{r_{1}}k\right|\ge\lambda r_{1}\sqrt{k}\right)\ll e^{-c_{1}\lambda^{2}},
\]
and hence
\[
\text{Pr}\left(\left|\ln\left(x_{k}^{2}+y_{k}^{2}\right)\right|\ge2r_{1}\lambda\sqrt{k}\right)\ll e^{-c_{0}\lambda^{2}}+e^{-c_{1}\lambda^{2}}.
\]
Therefore there exists $c>0$ such that 
\[
\text{Pr}\left(x_{k}^{2}\ge\exp\left(r_{1}\lambda\sqrt{k}\right)\right)\le\text{Pr}\left(x_{k}^{2}+y_{k}^{2}\ge\exp\left(r_{1}\lambda\sqrt{k}\right)\right)\ll e^{-c\lambda^{2}}.
\]
\end{proof}
\begin{cor}
\label{cor:Central Limit thereom for distance} Let $z_{k}\propto A_{r_{1}}^{k}\delta_{z_{0}}$.
Then there exists $c=c(r_{1})>0$, such that for every $k\ge0$ and
$\lambda\ge0$
\begin{equation}
\text{Pr}\left(\left|d\left(z_{k},z_{0}\right)-\alpha_{r_{1}}r_{1}k\right|\ge\lambda\sqrt{k}\right)\ll_{r_{1}}e^{-c\lambda^{2}}.\label{eq:C,c depend on r1}
\end{equation}
\end{cor}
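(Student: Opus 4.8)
The goal is to transfer the concentration statement for $\ln(y_k)$ (and, via the isometry $z\mapsto -1/z$, for $\ln(x_k^2+y_k^2)$) from Corollary \ref{cor:Central Limit thereom for iwasawa} and Lemma \ref{lem:bounding x} to a statement about the hyperbolic distance $d(z_k,z_0)$. Recall that $z_0=i$, so
\[
\cosh d(z_k,z_0)=1+\frac{x_k^2+(y_k-1)^2}{2y_k}=\frac{x_k^2+y_k^2+1}{2y_k}.
\]
Taking logarithms, $d(z_k,z_0)=\operatorname{acosh}\!\big(\tfrac{x_k^2+y_k^2+1}{2y_k}\big)$, and since $\operatorname{acosh}(t)=\ln(2t)+O(1/t^2)$ for $t\ge 1$, we get
\[
d(z_k,z_0)=\ln\!\left(\frac{x_k^2+y_k^2+1}{y_k}\right)+O(1)
=\ln\!\left(x_k^2+y_k^2+1\right)-\ln y_k+O(1).
\]
So the first step is this elementary algebraic identity and the $\operatorname{acosh}$ asymptotic, reducing everything to controlling $\ln(x_k^2+y_k^2+1)$ and $\ln y_k$ separately.

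\textbf{Main estimates.} By Corollary \ref{cor:Central Limit thereom for iwasawa} there is $c_0=c_0(r_1)>0$ with
\[
\text{Pr}\left(\left|\ln(y_k)+\alpha_{r_1}kr_1\right|\ge \tfrac{\lambda}{2} r_1\sqrt{k}\right)\ll e^{-c_0\lambda^2},
\]
so with probability $1-O(e^{-c_0\lambda^2})$ we have $-\ln y_k=\alpha_{r_1}r_1k+O(\lambda\sqrt{k})$. For the other term, note $0<y_k$ always, and on the same good event $y_k\le \exp(-\alpha_{r_1}r_1 k+\tfrac{\lambda}{2}r_1\sqrt k)\le e^{O(\lambda\sqrt k)}$ (using $k\ge 0$), so $y_k^2+1\ll e^{O(\lambda\sqrt k)}$; combined with Lemma \ref{lem:bounding x}, which gives $x_k^2\le \exp(\tfrac{\lambda}{2} r_1\sqrt k)$ off an event of probability $O(e^{-c_1\lambda^2})$, we obtain $x_k^2+y_k^2+1\le e^{C\lambda\sqrt k}$ on the intersection of the good events. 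For a lower bound, $x_k^2+y_k^2+1\ge 1$ trivially, so $0\le \ln(x_k^2+y_k^2+1)\le C\lambda\sqrt k$. Plugging into the identity above,
\[
d(z_k,z_0)=\alpha_{r_1}r_1 k+O\big(\lambda\sqrt{k}\big)+O(1)
\]
with probability $1-O(e^{-c\lambda^2})$ for a suitable $c=c(r_1)>0$, after absorbing the additive $O(1)$ (and the implied constants) by shrinking $c$ and using that for $\lambda\sqrt k\lesssim 1$ the bound $e^{-c\lambda^2}$ exceeds $1$ anyway so the claim is vacuous. This is exactly \eqref{eq:C,c depend on r1}.

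\textbf{Expected obstacle.} There are no deep steps here; the care is entirely in bookkeeping. The one genuinely delicate point is the lower tail of $d(z_k,z_0)$: we must rule out that cancellation makes $\frac{x_k^2+y_k^2+1}{2y_k}$ close to $1$, i.e. that $z_k$ returns near $z_0$. This is handled because $-\ln y_k$ is already $\approx \alpha_{r_1}r_1k$ which is positive and growing, while $\ln(x_k^2+y_k^2+1)\ge 0$, so the two contributions cannot cancel; hence $d(z_k,z_0)\ge \alpha_{r_1}r_1k-O(\lambda\sqrt k)$ on the good event with no further argument. A minor secondary nuisance is that $\alpha_{r_1}$ here denotes the quantity written $\alpha$ in Theorem \ref{thm2} and $\alpha_{r_1}$ in Corollary \ref{cor:Central Limit thereom for iwasawa}; one should just note they coincide. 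Everything else is a union bound over the two or three exceptional events, each of measure $O(e^{-c\lambda^2})$, and renaming constants.
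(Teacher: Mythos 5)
Your proof is correct and follows essentially the same route as the paper: both reduce the distance to the concentration of $\ln y_{k}$ (Corollary \ref{cor:Central Limit thereom for iwasawa}) plus the control of $x_{k}$ (Lemma \ref{lem:bounding x}), followed by a union bound and absorption of the additive $O(1)$ and of small $\lambda$ into the constants. The only cosmetic difference is that you use the exact identity $\cosh d\left(z_{k},i\right)=\frac{x_{k}^{2}+y_{k}^{2}+1}{2y_{k}}$ (which also forces you to bound $y_{k}$ from above on the good event), whereas the paper compares $d\left(z_{k},i\right)$ with $d\left(z_{k},x_{k}+i\right)=\left|\ln y_{k}\right|$ via the triangle inequality with error $d\left(x_{k}+i,i\right)=\text{acosh}\left(1+x_{k}^{2}/2\right)$.
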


\begin{proof}
Let us start by proving that there exists $c>0$, such that for $k\ge0$,
$\lambda\ge0$,
\begin{equation}
\text{Pr}\left(\left|d\left(z_{k},z_{0}\right)-\alpha_{r_{1}}r_{1}k\right|\ge1+\lambda r_{1}\sqrt{k}\right)\ll e^{-c\lambda^{2}}\label{eq:CLT dix pf}
\end{equation}
For any point $z=x+iy\in\H$, the triangle inequality implies that
\begin{align*}
\left|d\left(z,i\right)-d\left(z,x+i\right)\right| & \le d\left(x+i,i\right)=\text{acosh}\left(1+\frac{x^{2}}{2}\right)\\
 & \le\max\left\{ 1,1+10\ln\left(x^{2}\right)\right\} .
\end{align*}
Hence by Lemma \ref{lem:bounding x} there exists $c_{0}>0$, such
that
\begin{equation}
\text{Pr}\left(\left|d\left(z,i\right)-d\left(z,x+i\right)\right|\le1+\lambda r_{1}\sqrt{k}\right)\ll e^{-c_{0}\lambda^{2}}.\label{eq:CLT_pf1}
\end{equation}
And by Corollary \ref{cor:Central Limit thereom for iwasawa} there
exists $c_{1}>0$, such that
\[
\text{Pr}\left(\left|\ln y+\alpha_{r_{1}}kr_{1}\right|\ge\lambda r_{1}\sqrt{k}\right)\ll e^{-c_{1}\lambda^{2}}.
\]
Since $\left|d\left(z,x+i\right)-\alpha_{r_{1}}kr_{1}\right|=\left|\left|\ln y\right|-\alpha_{r_{1}}kr_{1}\right|$,
if $\left|\left|\ln y\right|-\alpha_{r_{1}}kr_{1}\right|\ge\lambda r_{1}\sqrt{k}$
then also $\left|\ln y+\alpha_{r_{1}}kr_{1}\right|\ge\lambda r_{1}\sqrt{k}$,
and

\begin{align}
\text{Pr}\left(\left|d\left(z,x+i\right)-\alpha_{r_{1}}kr_{1}\right|\ge\lambda r_{1}\sqrt{k}\right) & \ll e^{-c_{1}\lambda^{2}},\label{eq:CLT_pf2}
\end{align}
which completes the proof.

Equation \ref{eq:C,c depend on r1} follows from Equation \ref{eq:CLT dix pf},
as for $\lambda\ge r_{1}^{-1}$ and $k>0$ $1+\lambda r_{1}\sqrt{k}\le2\lambda r_{1}\sqrt{k}$,
and we can choose $c'(r_{1})=c/r_{1}2$ and choose the constant of
$\ll_{r_{1}}$ in such a way that \ref{eq:C,c depend on r1} holds
for $\lambda\le r_{1}^{-1}$.
\end{proof}
\begin{rem}
One cannot hope to change $\left|d\left(z_{k},z_{0}\right)-\alpha_{r_{1}}kr_{1}\right|\ge1+\lambda r_{1}\sqrt{k}$
to $\left|d\left(z_{k},z_{0}\right)-\alpha_{r_{1}}kr_{1}\right|\ge\lambda r_{1}\sqrt{k}$
in the theorem without assuming dependency on $r_{1}$, since for
$r_{1}\to0$, $k\to\infty$ and $kr_{1}\to0$ the random walk behaves
like the distance $r_{1}$ random walk in $\R^{2}$, and in particular
it will not diverge at a constant speed. 
\end{rem}

Note that for $f\in L^{2}\left(\H\right)$ ($f\in L^{2}\left(X\right)$,
resp.), and for $x\in\H$ ($x\in X$, resp.), the following equality
holds 
\[
A_{r_{1}}^{k}f(x)=\int_{0}^{kr_{1}}\left(A_{r}f\right)(x)dm_{k}^{r_{1}}\left(r\right),
\]
for some probability measure $m_{k}^{r_{1}}$ supported on $\left[0,kr_{1}\right]$
and $k\in\N$. 
\begin{cor}
\label{cor:integral and CLT for discrete}There exists $c=c(r_{1})>0$
such that for every $k\ge0$
\[
\intop_{r:\left|r-kr_{1}\alpha_{r_{1}}\right|\le\lambda r_{1}\sqrt{k}}dm_{k}^{r_{1}}\left(r\right)\ll_{r_{1}}\exp\left(-c\lambda^{2}\right).
\]
\end{cor}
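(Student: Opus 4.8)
The plan is to recognise $m_k^{r_1}$ as the distribution of the radial coordinate of the random walk and then to read off the estimate from Corollary \ref{cor:Central Limit thereom for distance}.

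First I would check the following identification: if $z_k\propto A_{r_1}^k\delta_{z_0}$ is the walk from Corollary \ref{cor:Central Limit thereom for distance} (with $z_0=i$), then $m_k^{r_1}$ is precisely the law of $d(z_k,z_0)$. To see this, apply the decomposition $A_{r_1}^kf(x)=\int_0^{kr_1}(A_rf)(x)\,dm_k^{r_1}(r)$ --- which holds for $f\in L^2(\H)$ and, by continuity, for $f\in C_c(\H)$ --- at the base point $x=z_0$ and pair against $\delta_{z_0}$, getting
\[
\mathbb{E}\,f(z_k)=\bigl(A_{r_1}^kf\bigr)(z_0)=\int_0^{kr_1}(A_rf)(z_0)\,dm_k^{r_1}(r).
\]
If $f=h\bigl(d(\cdot,z_0)\bigr)$ is radial about $z_0$ with $h$ bounded continuous, then $(A_rf)(z_0)$ is the average of $f$ over the sphere of radius $r$ about $z_0$, which is $h(r)$; hence $\mathbb{E}\,h\bigl(d(z_k,z_0)\bigr)=\int h(r)\,dm_k^{r_1}(r)$, i.e.\ $m_k^{r_1}$ is the push-forward of $A_{r_1}^k\delta_{z_0}$ under $z\mapsto d(z,z_0)$. (Equivalently: $A_{r_1}^k\delta_{z_0}=\delta_{S_{r_1}}^{\ast k}\ast\delta_{z_0}$ is a $K$-invariant measure centred at $z_0$, hence automatically a mixture $\int(A_r\delta_{z_0})\,dm_k^{r_1}(r)$ of uniform sphere measures, and $m_k^{r_1}$ is its radial part.)

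With this in hand the statement is immediate. The $m_k^{r_1}$-mass of the complement of the central window equals
\[
\int_{r:\,|r-kr_1\alpha_{r_1}|\ge\lambda r_1\sqrt k}dm_k^{r_1}(r)=\text{Pr}\!\left(\bigl|d(z_k,z_0)-\alpha_{r_1}r_1k\bigr|\ge\lambda r_1\sqrt k\right),
\]
and applying Corollary \ref{cor:Central Limit thereom for distance} with $\lambda$ replaced by $r_1\lambda$ bounds the right-hand side by $\ll_{r_1}e^{-c(r_1\lambda)^2}=e^{-c'\lambda^2}$ with $c'=c'(r_1)=c\,r_1^2>0$. This is the claimed inequality.

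So there is essentially no obstacle beyond bookkeeping: the genuine content is already contained in Corollary \ref{cor:Central Limit thereom for distance} (hence ultimately in the central limit theorem / Hoeffding bound of Corollary \ref{cor:Central Limit thereom for iwasawa} and the control of $x_k$ in Lemma \ref{lem:bounding x}). The only point that deserves a careful line is the \emph{transfer} of the decomposition $A_{r_1}^kf=\int A_rf\,dm_k^{r_1}$ from $L^2$-functions to compactly supported test functions, and therefore to $\delta_{z_0}$ --- that is, that the mixing measure appearing in the averaging decomposition is the very probability measure governing the radial displacement of the walk; this follows at once because convolution powers of the $K$-invariant measure $\delta_{S_{r_1}}$ are again $K$-invariant.
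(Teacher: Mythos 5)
Your proposal is correct and follows the same route as the paper, whose entire proof is the one-line deduction from Corollary \ref{cor:Central Limit thereom for distance}; you merely make explicit the (correct) identification of $m_{k}^{r_{1}}$ with the law of $d\left(z_{k},z_{0}\right)$ and the rescaling $\lambda\mapsto r_{1}\lambda$ of the constant. You also rightly read the region of integration as the complement $\left|r-kr_{1}\alpha_{r_{1}}\right|\ge\lambda r_{1}\sqrt{k}$, the $\le$ in the printed statement being a typo, as its later use in the proof of Theorem \ref{thm2} confirms.
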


\begin{proof}
Follows directly from Corollary \ref{cor:Central Limit thereom for distance}.
\end{proof}
In the next section we will prove that the measure $m_{k}$ for $k\ge3$
is actually defined by an $L^{2}$-function $M\left(r_{1},r\right)$,
and $dm_{k}^{r_{1}}\left(r\right)=M\left(r_{1},r\right)dr$.

\subsection*{The Brownian Motion}

The Brownian motion is the random walk on $\H$ defined by $B_{t}=\exp\left(-\Delta t\right)$.
The Brownian motion was studied by many authors, and can be analyzed
either by the Helgason-Fourier transform, or by the ``distance to
infinity'' approach used to study the discrete random walk. In any
case, based on \cite{davies1988heat,cammarota2014asymptotic}, we
may write $B_{t}f(x)=\int p(t,r)\left(A_{r}f\right)(x)dr$, with %

\[
p\left(t,r\right)\asymp\frac{t^{-1}r}{\sqrt{1+r+t}}\exp\left(-\frac{\left(r-t\right)^{2}}{4t}\right)\ll t^{-1}r\exp\left(-\frac{\left(r-t\right)^{2}}{4t}\right).
\]
\begin{prop}
\label{prop:Integral and CLT for Brownian}There exist $c>0,t_{0}\ge0$
such that for every $\lambda>0$ and every $t>t_{0}$

\[
\intop_{r:\left|r-t\right|\ge\lambda\sqrt{t}}p(t,r)dr\ll_{t_{0}}e^{-c\lambda^{2}}.
\]
\end{prop}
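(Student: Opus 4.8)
The plan is to reduce the claimed tail bound to an elementary Gaussian estimate using the explicit asymptotic $p(t,r) \ll t^{-1} r \exp\left(-\frac{(r-t)^2}{4t}\right)$ stated just above. First I would split the region $\{r : |r-t| \ge \lambda\sqrt{t}\}$ into $\{r \le t - \lambda\sqrt{t}\}$ and $\{r \ge t + \lambda\sqrt{t}\}$; since $r$ ranges over $[0,\infty)$ and the integrand is dominated by a function peaked near $r = t$, the left tail is entirely bounded by the right-tail estimate (indeed it is supported on a bounded interval and contributes at most a finite piece of the same Gaussian), so the real work is the right tail. There I would substitute $u = r - t$ and bound $t^{-1} r = t^{-1}(t + u) \le 1 + u/t$, so that
\[
\intop_{r \ge t+\lambda\sqrt{t}} p(t,r)\,dr \ll \intop_{u \ge \lambda\sqrt{t}} \left(1 + \frac{u}{t}\right) \exp\left(-\frac{u^2}{4t}\right) du.
\]

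Next I would rescale by $v = u/\sqrt{t}$, giving $du = \sqrt{t}\,dv$, $u/t = v/\sqrt{t}$, and the integral becomes $\sqrt{t}\intop_{v \ge \lambda}\left(1 + v/\sqrt{t}\right) e^{-v^2/4}\,dv$. The factor of $\sqrt{t}$ out front is the one genuinely annoying feature, and it is why the proposition requires $t > t_0$ for some fixed $t_0$ rather than holding for all $t$: one absorbs the polynomial-in-$t$ prefactor into the Gaussian by writing $\sqrt{t}\, e^{-\lambda^2/8} \le C(t_0)$ is false in general, so instead I would split off a fixed amount of the exponent — bound $e^{-v^2/4} \le e^{-v^2/8} e^{-\lambda^2/8}$ on the range $v \ge \lambda$, pull out $e^{-\lambda^2/8}$, and observe that $\sqrt{t}\intop_{v\ge\lambda}(1 + v/\sqrt{t}) e^{-v^2/8}\,dv$ is $\ll \sqrt{t}\cdot\text{(constant)} + \text{(constant)}$, which is not yet bounded. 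The correct fix: keep a smaller fraction, say $e^{-v^2/4} = e^{-v^2/4 + \lambda^2/4}\cdot e^{-\lambda^2/4}$ only helps when one further observes that for the proposition's conclusion $e^{-c\lambda^2}$ with a possibly small $c$, one may also absorb polynomial factors of $t$ at the cost of shrinking $c$ only if $\lambda$ is large compared to $\sqrt{\log t}$; for small $\lambda$ the bound $\intop p(t,r)\,dr \le 1$ (it is essentially a probability measure, up to the $\asymp$ constant) already gives $\ll 1 \ll e^{-c\lambda^2}$ for bounded $\lambda$. So the argument naturally bifurcates: for $\lambda \le \sqrt{t}$ say, the Gaussian $e^{-v^2/4}$ over $v \ge \lambda$ together with the prefactor $\sqrt{t}(1 + v/\sqrt t)$ is handled by noting $\sqrt{t} \le e^{v^2/8}$ is automatic once $v \ge \lambda$ and $\lambda$ is not too small relative to... — this is getting delicate, so cleaner is: use $p(t,r)\asymp \frac{t^{-1}r}{\sqrt{1+r+t}}\exp(-(r-t)^2/4t)$, the \emph{sharper} form with the $\sqrt{1+r+t}$ in the denominator, which for $r \asymp t$ gives an extra $t^{-1/2}$ killing the stray $\sqrt{t}$.

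Concretely, with the sharper kernel, on the right tail $r \ge t$ one has $\frac{t^{-1} r}{\sqrt{1+r+t}} \le \frac{t^{-1}(t+u)}{\sqrt{t}} = t^{-1/2}(1 + u/t)$, and after the substitution $v = u/\sqrt t$ the prefactor $\sqrt t$ from $du$ cancels the $t^{-1/2}$, leaving $\intop_{v\ge\lambda}(1 + v/\sqrt t)e^{-v^2/4}\,dv \le \intop_{v\ge\lambda}(1+v)e^{-v^2/4}\,dv \ll e^{-\lambda^2/4}$ by the standard estimate $\intop_{\lambda}^\infty (1+v)e^{-v^2/4}dv \ll e^{-\lambda^2/4}$ (integrate $v e^{-v^2/4}$ exactly; bound $\intop_\lambda^\infty e^{-v^2/4}dv$ by $\intop_\lambda^\infty \frac{v}{\lambda} e^{-v^2/4}dv$ for $\lambda \ge 1$ and by a constant for $\lambda \le 1$). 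The left tail $r \le t - \lambda\sqrt t$ is handled identically after $u = t - r \ge \lambda \sqrt t > 0$, with $r/t \le 1$ making the prefactor even smaller, except one must also check the piece $r \in [0, \text{small}]$ where $\sqrt{1+r+t} \asymp \sqrt t$ still holds, so the same bound applies. Taking $t_0 = 1$ (so that $1 + r + t \asymp t$ uniformly and the $\asymp$ constants behave) and $c$ a small absolute constant below $1/4$ to also cover the regime $\lambda \le 1$ via $\intop p(t,r)dr \ll 1$, the proposition follows. The main obstacle is purely bookkeeping: making sure the $\asymp$ constants in the kernel and the prefactor estimates are uniform in $t \ge t_0$, and correctly matching the left tail near $r = 0$; there is no conceptual difficulty once the sharper form of $p(t,r)$ is used rather than the cruder $\ll t^{-1}r\exp(-(r-t)^2/4t)$.
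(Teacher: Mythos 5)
Your proposal is correct and takes essentially the same route as the paper's proof: split the integral into the two tails, rescale by $v=(r-t)/\sqrt{t}$, and conclude with an elementary Gaussian tail estimate $\int_{\lambda}^{\infty}(1+v)e^{-v^{2}/4}dv\ll e^{-\lambda^{2}/4}$. Your insistence on the sharper form $p(t,r)\asymp\frac{t^{-1}r}{\sqrt{1+r+t}}\exp\left(-\frac{(r-t)^{2}}{4t}\right)$ to cancel the Jacobian $\sqrt{t}$ is in fact slightly more careful than the paper's write-up, which uses the cruder bound and silently omits that Jacobian in the substitution $r=t+\lambda'\sqrt{t}$.
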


\begin{proof}
We have
\[
\intop_{r:\left|r-t\right|\ge\lambda\sqrt{t}}p(t,r)dr\le\intop_{-\infty}^{-\lambda}p(t,t+\lambda\sqrt{t})d\lambda'+\intop_{\lambda}^{\infty}p(t,t+\lambda^{\prime}\sqrt{t})d\lambda'
\]
For $r=t-\lambda'\sqrt{t}\le t$, we have
\[
p\left(t,r\right)\ll e^{-\frac{\lambda^{\prime2}}{4}},
\]
so by the standard bound for $\lambda\ge0$
\[
\intop_{-\infty}^{-\lambda}e^{-x^{2}}dx=\intop_{-\infty}^{0}e^{-\left(-\lambda+x\right)^{2}}dx\le e^{-\lambda^{2}}\intop_{-\infty}^{0}e^{-x^{2}}\ll e^{-\lambda^{2}},
\]
we have
\[
\intop_{-\infty}^{-\lambda}p(t,t+\lambda^{\prime}\sqrt{t})d\lambda'\ll e^{-\frac{\lambda^{2}}{4}}.
\]
For $r=t+\lambda'\sqrt{t}\ge t$
\[
p\left(t,r\right)\ll\left(1+\frac{\lambda^{\prime}}{\sqrt{t}}\right)e^{-\frac{\lambda^{\prime2}}{4}},
\]
so for $t\ge t_{0}$
\begin{align*}
\intop_{\lambda}^{\infty}p(t,t+\lambda^{\prime}\sqrt{t})d\lambda' & \ll e^{-\frac{\lambda^{2}}{4}}+\frac{1}{\sqrt{t_{0}}}\intop_{\lambda}^{\infty}\lambda^{\prime}e^{-\frac{\lambda^{\prime2}}{4}}d\lambda'\\
 & \ll_{t_{0}}e^{-\frac{\lambda^{2}}{4}}+\left.\left(e^{-\frac{\lambda^{\prime2}}{4}}\right)\right|_{\lambda}^{\infty}\ll e^{-\frac{\lambda^{2}}{4}}.
\end{align*}
\end{proof}

\section{\label{sec:DeltaToL2}Short Time Bound on the Random Walks}

In this section we show that after a short time both random walks
on $X$ can be described by an $L^{2}$-function, whose norm depend
is bounded is the injectivity radius of $x_{0}$ is bounded away from
$0$. 

It was shown in Section \ref{sec:Preliminaries-and-Notation} that
the $L^{2}-$spectrum of the operator $A_{r}$ constitutes of the
values of $\varphi_{\frac{1}{2}+is}(e^{r}i)$ for $s\in\R$. For the
ease of notation we write $\phi(s,r)=\varphi_{\frac{1}{2}+is}(e^{r}i)$.
\begin{lem}
\label{lem:Technical Lemma0}For any $r$, the following inequality
holds
\[
\left|\phi(s,r)\right|\ll_{r}\left|s\right|^{-1/2}.
\]
\end{lem}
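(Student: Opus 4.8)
The plan is to estimate the spherical function $\phi(s,r) = \varphi_{\frac{1}{2}+is}(e^r i) = P_{-\frac12+is}(\cosh r)$ using the integral representation already recorded in the Harish-Chandra bounds proposition, namely
\[
\phi(s,r) = \frac{\sqrt 2}{\pi}\, r\int_0^1 \frac{\cos(s r x)}{\sqrt{\cosh r - \cosh(rx)}}\,dx .
\]
For fixed $r$, the weight $(\cosh r - \cosh rx)^{-1/2}$ is an integrable function on $[0,1]$, singular only at $x=1$, so $\phi(s,r)$ is (up to the constant $\tfrac{\sqrt2}{\pi}r$) the cosine transform of this weight. The decay in $s$ therefore comes from oscillatory-integral cancellation: I expect $|\phi(s,r)|\ll_r |s|^{-1/2}$, with the exponent $1/2$ dictated by the square-root singularity of the weight at the endpoint $x=1$.

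First I would split the integral at, say, $x = 1 - |s|^{-1}$ (assuming $|s|\ge 2$, the case $|s|\le 2$ being trivial since $|\phi(s,r)|\le \phi(0,r)\ll_r 1$). On the ``bulk'' piece $x\in[0,1-|s|^{-1}]$, the weight $w(x) = (\cosh r - \cosh rx)^{-1/2}$ is smooth with derivative controlled by a power of $(\cosh r - \cosh rx)^{-1}$; integrating by parts once against $\cos(srx)$ gains a factor $1/|s|$ and leaves a boundary term of size $\ll_r w(1-|s|^{-1}) \cdot |s|^{-1}$ plus an integral of $|w'|$. Since near $x=1$ one has $\cosh r - \cosh rx \asymp_r (1-x)$, we get $w(1-|s|^{-1}) \asymp_r |s|^{1/2}$ and $\int_0^{1-|s|^{-1}} |w'(x)|\,dx \ll_r |s|^{1/2}$ as well (the antiderivative of $|w'|$ up to $x$ is comparable to $w(x)$). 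Hence the bulk contributes $\ll_r |s|^{-1}\cdot|s|^{1/2} = |s|^{-1/2}$. On the ``endpoint'' piece $x\in[1-|s|^{-1},1]$ I would simply bound $|\cos(srx)|\le 1$ and use $\int_{1-|s|^{-1}}^1 (1-x)^{-1/2}\,dx \asymp |s|^{-1/2}$, which again gives $\ll_r |s|^{-1/2}$. Multiplying by the prefactor $\tfrac{\sqrt2}{\pi} r$ (absorbed into $\ll_r$) yields the claim.

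The main obstacle is making the $r$-dependence of the implied constant clean: the comparison $\cosh r - \cosh rx \asymp_r (1-x)$ and the estimates on $w$ and $w'$ all carry $r$-dependent constants (blowing up as $r\to 0$ or $r\to\infty$), but since the lemma only asserts $\ll_r$, this is harmless — one just has to be careful to record that the constants depend only on $r$ and not on $s$. An alternative route, if one prefers to avoid integration by parts, is to invoke a standard uniform asymptotic for Legendre/conical functions $P_{-\frac12+is}(\cosh r)$ for large $s$ (e.g. from \cite{Terras2013}), which gives $P_{-\frac12+is}(\cosh r) = \sqrt{\tfrac{2}{\pi s \sinh r}}\cos\!\big(sr - \tfrac{\pi}{4}\big) + O_r(s^{-3/2})$; this immediately yields the bound with exponent exactly $1/2$. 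I would present the self-contained integration-by-parts argument as the main proof and mention the asymptotic as the conceptual reason the exponent is sharp.
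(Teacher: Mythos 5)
Your proposal is correct and follows essentially the same route as the paper's proof: the same integral representation, the same splitting of the integral at distance $|s|^{-1}$ from the square-root singularity, the trivial bound on the endpoint piece, and a single integration by parts on the bulk piece with matching estimates for the boundary term and the derivative integral (the paper merely works in the variable $1-x$, placing the singularity at $0$). The alternative via Legendre-function asymptotics is a nice remark but is not used in the paper.
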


\begin{proof}
Up to a constant, the function $\phi(s,r)$ is equal to $\intop_{0}^{1}\frac{\cos\left(sr\left(1-x\right)\right)}{\sqrt{\cosh r-\cosh r\left(1-x\right)}}dx$
. This function is continuous in $s$, hence we may assume that $\left|s\right|$
is large enough. Write

\[
\intop_{0}^{1}\frac{\cos\left(sr\left(1-x\right)\right)}{\sqrt{\cosh r-\cosh r\left(1-x\right)}}dx=\intop_{0}^{\left|s\right|^{-1}}\frac{\cos\left(sr\left(1-x\right)\right)}{\sqrt{\cosh r-\cosh r\left(1-x\right)}}dx+\intop_{\left|s\right|^{-1}}^{1}\frac{\cos\left(sr\left(1-x\right)\right)}{\sqrt{\cosh r-\cosh r\left(1-x\right)}}dx.
\]
Then since $\lim_{x\to0^{+}}\frac{\sqrt{x}}{\sqrt{\cosh r-\cosh r\left(1-x\right)}}=c_{r}>0$,
for $\left|s\right|$ large enough we have

\[
\left|\intop_{0}^{\left|s\right|^{-1}}\frac{\cos\left(sr\left(1-x\right)\right)}{\sqrt{\cosh r-\cosh r\left(1-x\right)}}dx\right|\le\intop_{0}^{\left|s\right|^{-1}}\frac{1}{\sqrt{\cosh r-\cosh r\left(1-x\right)}}dx\ll_{r}\intop_{0}^{\left|s\right|^{-1}}\frac{1}{\sqrt{x}}dx\ll\frac{1}{\sqrt{\left|s\right|}}.
\]
Analogously, 
\[
\intop_{\left|s\right|^{-1}}^{1}\frac{1}{\left(\cosh r-\cosh r\left(1-x\right)\right)^{3/2}}dx\ll_{r}\sqrt{\left|s\right|}.
\]
Write $G(x)=-\frac{1}{sr}\sin\left(sr\left(1-x\right)\right)$, $F(x)=1/\sqrt{\cosh r-\cosh r\left(1-x\right)}$,
then by integration by parts, 
\begin{align*}
\intop_{\left|s\right|^{-1}}^{1}G'(x)F(x)dx & =G(1)F(1)-G\left(\left|s\right|^{-1}\right)F(\left|s\right|^{-1})-\intop_{\left|s\right|^{-1}}^{1}G(x)F'(x)dx,\\
 & =-G\left(\left|s\right|^{-1}\right)F(\left|s\right|^{-1})-\intop_{\left|s\right|^{-1}}^{1}G(x)F'(x)dx
\end{align*}
and hence,
\begin{align*}
\left|\intop_{\left|s\right|^{-1}}^{1}\frac{\cos\left(sr\left(1-x\right)\right)}{\sqrt{\cosh r-\cosh r\left(1-x\right)}}dx\right| & \ll\left|\frac{1}{ss^{-1}}\right|\left|\frac{1}{\sqrt{\cosh r-\cosh r\left(1-\left|s\right|^{-1}\right)}}\right|+\intop_{\left|s\right|^{-1}}^{1}\frac{1}{\left|s\right|r\left(\cosh r-\cosh r\left(1-x\right)\right)^{3/2}}dx\\
 & \ll_{r}\left|s\right|^{-1/2}+\frac{1}{\left|s\right|}\cdot\sqrt{\left|s\right|}\ll\frac{1}{\sqrt{\left|s\right|}},
\end{align*}
which completes the proof.
\end{proof}
\begin{lem}
\label{lem:Technical Lemma1}For any $x_{0}\in\H$, we have $A_{r_{1}}^{3}\delta_{x_{0}}\in L^{2}\left(\H\right)$.
\end{lem}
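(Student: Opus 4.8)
Since the operator $A_{r_1}$ commutes with the $G$-action and the isometry group $G$ acts transitively on $\H$, and since the $L^{2}$-norm is isometry-invariant, it suffices to treat the case $x_{0}=i$. In that case $A_{r_1}^{3}\delta_{i}$ is the threefold convolution $\delta_{S_{r_1}}\ast\delta_{S_{r_1}}\ast\delta_{S_{r_1}}$ of the uniform $K$-invariant probability measure on the circle of radius $r_{1}$, so it is a compactly supported, $K$-invariant probability measure (supported in the closed ball of radius $3r_{1}$ about $i$). I will identify it with an $L^{2}$-function via Corollary \ref{cor:L2 fourier transform}, by computing its Helgason--Fourier transform and showing it lies in $L^{2}\bigl(\R\times K,\tfrac{1}{4\pi}s\tanh\pi s\,ds\,dk\bigr)$.

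\emph{Step 1: the transform.} Because $\varphi_{\frac12+is}$ is $K$-invariant and depends only on the distance to $i$, the transform of $\delta_{S_{r_1}}$ is $\widehat{\delta_{S_{r_1}}}(s)=\intop_{\H}\varphi_{\frac12+is}(z)\,d\delta_{S_{r_1}}(z)=\phi(s,r_{1})$, the integrand being constantly equal to $\varphi_{\frac12+is}(e^{r_1}i)=\phi(s,r_1)$ on the support. Applying the convolution property (Proposition \ref{prop:Plancherel-Helgason}(2), extended to compactly supported $K$-invariant measures) three times gives
\[
\widehat{A_{r_1}^{3}\delta_{i}}(s)=\phi(s,r_{1})^{3}.
\]

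\emph{Step 2: integrability.} By Lemma \ref{lem:Technical Lemma0}, $\lvert\phi(s,r_{1})\rvert\ll_{r_{1}}\lvert s\rvert^{-1/2}$, hence $\lvert\phi(s,r_{1})^{3}\rvert\ll_{r_{1}}\lvert s\rvert^{-3/2}$ for $\lvert s\rvert$ large, while $\phi(\cdot,r_{1})$ is continuous and so bounded on any compact $s$-interval. Splitting the Plancherel integral at $\lvert s\rvert=1$: on $[-1,1]$ the density $\tfrac{1}{4\pi}s\tanh\pi s$ is bounded and $\phi(\cdot,r_1)^3$ is bounded, so that part is finite; on $\lvert s\rvert>1$ we use $s\tanh\pi s\le\lvert s\rvert$ to get
\[
\intop_{\lvert s\rvert>1}\lvert\phi(s,r_{1})^{3}\rvert^{2}\,\tfrac{1}{4\pi}s\tanh(\pi s)\,ds\ll_{r_{1}}\intop_{\lvert s\rvert>1}\lvert s\rvert^{-3}\lvert s\rvert\,ds=\intop_{\lvert s\rvert>1}\lvert s\rvert^{-2}\,ds<\infty.
\]
Since the transform is independent of $k\in K$ and $K$ has unit measure, $\widehat{A_{r_1}^{3}\delta_{i}}\in L^{2}\bigl(\R\times K,\tfrac{1}{4\pi}s\tanh\pi s\,ds\,dk\bigr)$, and Corollary \ref{cor:L2 fourier transform} yields $A_{r_1}^{3}\delta_{i}\in L^{2}(\H)$.

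\emph{Expected obstacle.} The computation is essentially routine once the decay estimate of Lemma \ref{lem:Technical Lemma0} is in hand; the only points requiring care are (i) justifying that the convolution identity for transforms carries over from $L^{1}$-functions to compactly supported $K$-invariant measures, and (ii) checking that there is no divergence near $s=0$ — this is handled because the Plancherel density $s\tanh\pi s$ vanishes there while $\phi(s,r_1)$ stays bounded. I also note that precisely three convolution factors are needed: two would give $\lvert\phi\rvert^{4}\lvert s\rvert\sim\lvert s\rvert^{-1}$, which is not integrable at infinity, which is why the statement is phrased for $A_{r_1}^{3}$.
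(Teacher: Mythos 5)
Your proof is correct and follows essentially the same route as the paper: compute the Helgason--Fourier transform of $A_{r_1}^{3}\delta_{x_0}$ as $\phi(s,r_1)^3$ via the convolution property, apply the decay bound $\lvert\phi(s,r_1)\rvert\ll_{r_1}\lvert s\rvert^{-1/2}$ of Lemma \ref{lem:Technical Lemma0} to see the Plancherel integral converges, and conclude by Corollary \ref{cor:L2 fourier transform}. Your added care about reducing to $x_0=i$, extending the convolution identity to compactly supported $K$-invariant measures, and noting why three factors are needed (the paper makes this last point in a remark) only makes the argument more complete.
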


\begin{proof}
By Theorem \ref{prop:Plancherel-Helgason}, the Helgason-Fourier transform
of $A_{r_{1}}^{3}$ satisfies $\widehat{A_{r_{1}}^{3}(s)}=\left(\widehat{A_{r_{1}}}(s)\right)^{3}=\phi^{3}\left(s,r_{1}\right)$.
Applying Lemma \ref{lem:Technical Lemma0}, and using the fact that
$\frac{s}{4\pi}\tanh\left(\pi s\right)<s$ implies that
\begin{align*}
\intop_{-\infty}^{\infty}\left|\widehat{A_{r_{1}}^{3}(s)}\right|^{2}\frac{s}{4\pi}\tanh\left(\pi s\right)ds & =\intop_{-\infty}^{\infty}\left|\phi\left(s,r_{1}\right)\right|^{6}\frac{s}{4\pi}\tanh\left(\pi s\right)ds\\
 & \ll_{r}1+\intop_{\left|s\right|>1}\left|s\right|^{-3}\left|s\right|ds<\infty.
\end{align*}
Using the inverse Fourier transform we conclude by Corollary \ref{cor:L2 fourier transform}
that $A_{r_{1}}^{3}=\intop_{r}f(r)A_{r}dr$, with $f(r)$ an $L^{2}$-function.
In particular, $\nn{A_{r_{1}}^{3}\delta_{x_{0}}}=\intop_{r}\left|f(r)\right|^{2}dr<\infty$,
as needed.
\end{proof}
\begin{rem}
For $k=0,1,2,$ the analogous statement is not true. For $k=0,1$,
$A_{r_{1}}^{k}\delta_{x_{0}}$ cannot be considered as a function.
For $k_{0}=2$, $A_{r_{1}}^{2}=\intop_{0}^{r_{1}}g(r)A_{r}dr$, $\intop_{0}^{r}g(r')dr'=\text{acos}((\cosh^{2}(r_{1})-\cosh(r))/\sinh^{2}(r_{1}))/\pi$,
where $g(r)$ is a function on $X$, but not an $L^{2}$ function.
\end{rem}

\begin{lem}
\label{lem:Technical Lemma2}For $k_{0}\ge3$ (respectively for $t_{0}>0$)
there exists a constant $C=C\left(r_{0},r_{1},k_{0}\right)$ (resp.
$C=C\left(r_{0},t_{0}\right)$) such that if $x_{0}\in X$ has a injectivity
radius at least $r_{0}$ then $A_{r_{1}}^{k_{0}}\delta_{x_{0}}\in L^{2}\left(X\right)$
and $\nn{A_{r_{1}}^{k_{0}}\delta_{x_{0}}}\le C$ (resp. $B_{t_{0}}\delta_{x_{0}}\in L^{2}\left(X\right)$
and $\nn{B_{t_{0}}\delta_{x_{0}}}\le C$).
\end{lem}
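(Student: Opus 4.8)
The plan is to reduce the statement on $X$ to the already-established statement on $\H$ (Lemma \ref{lem:Technical Lemma1}, and the analogous $L^2$-property of the Brownian kernel which follows from the pointwise bound $p(t,r)\ll t^{-1}r\exp(-(r-t)^2/4t)$ together with Corollary \ref{cor:L2 fourier transform}), by a local-to-global argument that exploits the injectivity radius. The key geometric observation is this: if $x_0\in X$ has injectivity radius at least $r_0$, then the ball $B_{x_0}(r_0)\subset X$ lifts isometrically to a ball of the same radius in $\H$, and more generally the covering map $\rho\colon\H\to X$ restricted to $B_{\tilde x_0}(r_0)$ is an isometry onto $B_{x_0}(r_0)$. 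So the measure $A_{r_1}^{k_0}\delta_{x_0}$ on $X$ is the pushforward under $\rho$ of the measure $A_{r_1}^{k_0}\delta_{\tilde x_0}$ on $\H$, where $\tilde x_0$ is a fixed lift.

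\textbf{Main steps.} First I would write $A_{r_1}^{k_0}\delta_{x_0}=\rho_*\bigl(A_{r_1}^{k_0}\delta_{\tilde x_0}\bigr)$, using that the averaging operator $A_{r_1}$ commutes with $\rho_*$ (both are convolution with the $K$-invariant measure $\delta_{S_{r_1}}$, which descends). Second, by Lemma \ref{lem:Technical Lemma1} we know $A_{r_1}^{k_0}\delta_{\tilde x_0}=f\cdot\mu$ for some $f\in L^2(\H)$ (for $k_0=3$; for $k_0>3$ apply $A_{r_1}^{k_0-3}$, which is a contraction on $L^2$, to stay in $L^2$), and $\|f\|_2=\|A_{r_1}^{k_0}\delta_{\tilde x_0}\|_2$ depends only on $r_1$ and $k_0$ — crucially \emph{not} on $X$. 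Third, the pushforward of $f\cdot\mu$ under $\rho$ is the $L^2(X)$ function $F(x)=\sum_{\gamma\in\Gamma}f(\gamma\tilde x)$, and $\|F\|_{L^2(X)}\le\|f\|_{L^2(\H)}$ — wait, this last inequality is false in general, so instead I would argue as follows: since $A_{r_1}^{k_0}=A_{r_1}^{k_0-3}\circ A_{r_1}^{3}$ and $A_{r_1}^{k_0-3}$ is a contraction on $L^2(X)$, it suffices to treat $k_0=3$. Then one writes $A_{r_1}^3\delta_{x_0}=A_{r_1}^{3/2}\bigl(A_{r_1}^{3/2}\delta_{x_0}\bigr)$ — but $3/2$ is not an integer, so instead I split as $A_{r_1}^3=A_{r_1}^2\circ A_{r_1}$: the measure $\nu_1=A_{r_1}\delta_{x_0}$ is supported on the sphere $S_{r_1}(x_0)$, and by the local-to-global isometry (valid since the relevant balls have radius controlled by $r_1$, and we may first pass to balls of radius $r_0$ by an initial $A_{r_0}$-type argument) the key quantity $\|A_{r_1}^3\delta_{x_0}\|_{L^2(X)}^2=\int_X\!\!\int_X K(x,y)\,d\nu_1(x)\,d\nu_1(y)$, where $K(x,y)=(A_{r_1}^2\delta_x * \overline{A_{r_1}^2\delta_y})$ evaluated appropriately, is bounded by the corresponding integral on $\H$ \emph{plus} the contribution of nontrivial $\gamma\in\Gamma$; that extra contribution is controlled because the kernel $A_{r_1}^2\delta_x$ is supported in a ball of radius $2r_1$, so at most $C(r_0,r_1)$ lattice translates can interact, and each is bounded by the $\H$-value.

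\textbf{Cleaner route, which I would actually adopt.} Rather than the delicate bookkeeping above, I would use the following: let $b=b_{x_0,r_0}$ be the normalized indicator of $B_{x_0}(r_0)$ as in the proof of Theorem \ref{thm1}. Then $\delta_{x_0}$ is ``dominated'' by $b$ in the sense that $A_{r_1}^{k_0}\delta_{x_0}$ and $A_{r_1}^{k_0}b$ agree up to the averaging over an $r_0$-ball; more precisely $A_{r_0}^{\text{sph}}\delta_{x_0}$ relates to $b$, and since $B_{x_0}(r_0)$ lifts isometrically, $\|b\|_{L^2(X)}=\mu(B_{r_0})^{-1/2}\ll_{r_0}1$ with $b$ supported in an isometrically embedded ball. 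One then shows $A_{r_1}^{k_0}\delta_{x_0}\in L^2(X)$ directly: $\langle A_{r_1}^{k_0}\delta_{x_0},\psi\rangle$ for $\psi\in C_c(X)$ equals, after lifting, an integral of $\psi$ against $A_{r_1}^{k_0}\delta_{\tilde x_0}=f\mu$ with $f\in L^2(\H)$; wrapping $f$ around $X$ gives $F\in L^2(X)$ provided the ``overlap multiplicity'' of $\mathrm{supp}(f)\subset B_{\tilde x_0}(k_0 r_1)$ under $\Gamma$ is finite, and by a standard packing estimate the number of $\gamma$ with $\gamma B_{\tilde x_0}(k_0r_1)\cap B_{\tilde x_0}(k_0r_1)\ne\emptyset$ is $\ll_{r_0,k_0,r_1}1$ once the injectivity radius at $x_0$ is $\ge r_0$; then $\|F\|_{L^2(X)}^2\le (\text{multiplicity})\cdot\|f\|_{L^2(\H)}^2\le C(r_0,r_1,k_0)^2$. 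The Brownian case is identical, replacing Lemma \ref{lem:Technical Lemma1} by the $L^2$-bound on the heat kernel $p(t,\cdot)$ coming from its explicit decay, with the caveat that $\mathrm{supp}(B_{t_0}\delta_{\tilde x_0})$ is all of $\H$, so one instead truncates: split $p(t_0,r)=p(t_0,r)\mathbf{1}_{r\le R}+p(t_0,r)\mathbf{1}_{r>R}$ and use that the tail has exponentially small $L^2$-mass (Proposition \ref{prop:Integral and CLT for Brownian}) while the head is supported in a fixed ball.

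\textbf{Main obstacle.} The real difficulty is the passage from the $L^2$-bound on $\H$ to an $L^2$-bound on $X$ with a constant independent of $X$ — naively $\|\rho_* g\|_{L^2(X)}$ can be much larger than $\|g\|_{L^2(\H)}$ if the support of $g$ wraps around $X$ many times. The injectivity-radius hypothesis is exactly what rescues this: it bounds the number of $\Gamma$-translates of a fixed-radius ball that can mutually overlap by a constant $C(r_0,k_0,r_1)$ depending only on $r_0$ and the (fixed) radius $k_0 r_1$ (resp. the effective radius for the truncated heat kernel at time $t_0$), and this packing constant is the source of the claimed $C$. Making this overlap estimate precise — i.e., that injectivity radius $\ge r_0$ everywhere along $\mathrm{supp}(f)$, or at least at $x_0$ combined with the bounded diameter of the support, forces bounded multiplicity — is the technical heart of the argument; everything else is an application of Lemma \ref{lem:Technical Lemma1}, Corollary \ref{cor:L2 fourier transform}, the contractivity of $A_{r_1}$ and $B_t$ on $L^2$, and Proposition \ref{prop:Integral and CLT for Brownian}.
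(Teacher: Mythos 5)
Your treatment of the discrete walk is correct and is essentially the paper's own argument: reduce to $k_{0}=3$ by contractivity, lift to $\H$ and invoke Lemma \ref{lem:Technical Lemma1}, and then bound the multiplicity of the covering map on the (compact) support $B_{\tilde{x}_{0}}\left(k_{0}r_{1}\right)$ by a packing estimate. Note that, as you observe in your closing remarks, injectivity radius at $x_{0}$ alone suffices: if $\gamma_{1}\tilde{x},\gamma_{2}\tilde{x}$ are two lifts of the same point inside $B_{\tilde{x}_{0}}\left(k_{0}r_{1}\right)$, then $\gamma_{2}\gamma_{1}^{-1}\tilde{x}_{0}$ and $\tilde{x}_{0}$ are distinct points of the $\Gamma$-orbit of $\tilde{x}_{0}$ at distance at most $2k_{0}r_{1}$, and the $r_{0}$-balls around orbit points are disjoint, so the multiplicity is at most $\mu\left(B_{2k_{0}r_{1}+r_{0}}\right)/\mu\left(B_{r_{0}}\right)\ll_{r_{0},r_{1},k_{0}}1$; Cauchy--Schwarz then gives $\left\Vert \rho_{*}f\right\Vert _{L^{2}(X)}^{2}\le D\left\Vert f\right\Vert _{L^{2}(\H)}^{2}$. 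This is the same mechanism (phrased via translated balls rather than via associated lifts of $x_{0}$) as in the paper.

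The Brownian half, however, has a genuine gap as you sketched it. Truncating the heat kernel at a fixed radius $R$ handles the head, but your treatment of the tail --- ``the tail has exponentially small $L^{2}$-mass (Proposition \ref{prop:Integral and CLT for Brownian})'' --- does not close the argument, for exactly the reason you yourself flag as the main obstacle: smallness of a norm on $\H$ does not control the norm of the pushforward on $X$ unless the covering multiplicity on the support is bounded, and the tail is supported on all of $\H$, where the number of lifts of a point of $X$ within distance $r$ of $\tilde{x}_{0}$ grows like $e^{r}$, unboundedly in $r$. (Moreover, Proposition \ref{prop:Integral and CLT for Brownian} is an $L^{1}$-type statement about the radial density for large $t$, not an $L^{2}$ bound at a fixed small time $t_{0}$.) To repair this you must play the pointwise Gaussian decay of the heat kernel at fixed time $t_{0}$, $B_{t_{0}}\delta_{\tilde{y}_{0}}(z)\ll e^{-c\,d(\tilde{y}_{0},z)^{2}}$, against the exponential growth of the lift count, summing over annuli of bounded width: the number of lifts at distance roughly $d_{1}+r_{0}k$ is $\ll_{r_{0}}e^{d_{1}+r_{0}k}$, the Gaussian beats this, and one obtains a pointwise bound of the form $e^{2d_{1}-c'd_{1}^{2}}$ on $X$, which is then squared and integrated against the volume growth $e^{d_{1}}$. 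This annulus-by-annulus summation (after first using $B_{t_{0}}=B_{t_{0}-t}B_{t}$ to reduce to a small time where the Gaussian bound holds) is precisely what the paper's proof does for the Brownian case, and it is the step missing from your outline.
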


\begin{proof}
We start with the discrete random walk $A_{r_{1}}^{k_{0}}\delta_{y_{0}}$.
Since $\nn{A_{r_{1}}}\le1$ it is enough to assume that $k_{0}=3$. 

Let $y_{0}\in\H$ be a fixed point covering $x_{0}\in X$. Let $x_{1}\in X$
be a point different from $x_{0}$. We claim that it has a bounded
number $D\ll_{r_{0},k_{0},k_{1}}1$ of points $z_{1},...,z_{D}\in B_{k_{0}r_{0}}\left(y_{0}\right)$
covering $x_{1}$. Since $A_{r_{1}}^{k_{0}}\delta_{y_{0}}\in L^{2}\left(\H\right)$,
it is supported on $B_{k_{0}r_{0}}\left(y_{0}\right)$ and $A_{r_{1}}^{k_{0}}\delta_{x_{0}}$
is the push-forward of $A_{r_{1}}^{k_{0}}\delta_{y_{0}}$ to $X$,
this claim will give the lemma for the discrete random walk. We may
assume that $d_{0}=d\left(x_{0},x_{1}\right)<k_{0}r_{1}$. Let therefore
$z_{1},z_{2},..\in B_{k_{0}r_{1}}\left(y_{0}\right)$ be a sequence
of different points covering $x_{1}$. Then each such point $z_{i}\in\H$
can be associated with another point $y_{i}\in\H$, covering $x_{0}$,
with $d\left(y_{i},z_{i}\right)=d_{0}$. Moreover, we may choose $y_{i}$
such that $y_{i}\ne y_{j}$ for $z_{i}\ne z_{j}$. By the injectivity
radius assumption, $d\left(y_{i},y_{j}\right)\ge2r_{0}$ for $i\ne j$.
All the $y_{i}$'s are contained in the ball $B_{2k_{0}r_{1}}\left(y_{0}\right)$,
and their number is therefore bounded by $\frac{\mu\left(B_{2k_{0}r_{1}}\right)}{\mu\left(B_{r_{0}}\right)}\ll_{r_{0},k_{0},k_{1}}1$.

Now we turn to the Brownian motion. Since $\nn{B_{t}}\le1$ and $B_{t+t'}=B_{t}B_{t'}$
we may assume that $t_{0}$ is small enough so that $p_{2}\left(r,t_{0}\right)$
is decreasing for $r>r_{0}$ and $B_{t_{0}}\delta_{y_{0}}\left(z\right)\le e^{-cd\left(y_{0},z\right)^{2}}$
for some $c=c\left(r_{0},t_{0}\right)>0$ and $d\left(y_{0},z\right)>r_{0}$.

Let $y_{0}\in\H$ be again a fixed point covering $x_{0}\in X$. Let
$x_{1}\in X$ be another point and let $d_{1}=d\left(x_{0},x_{1}\right)$.
Each point $z_{i}$ covering $x_{1}$ satisfies $d\left(y_{0},z_{i}\right)\ge d_{1}$
and the number of points $z_{i}$ covering $x_{1}$ of distance $d\left(y_{0},z_{i}\right)\le r$
is at most $D_{r}\le\frac{\mu\left(B_{r+d_{1}}\right)}{\mu\left(B_{r_{0}}\right)}\ll_{r_{0}}e^{d_{1}+r}$.
Therefore we get the bound: 
\begin{align*}
B_{t_{0}}\delta_{x_{0}}\left(x\right) & =\sum_{z_{i}}B_{t_{0}}\delta_{y_{0}}\left(z_{i}\right)=\sum_{k=0}^{\infty}\sum_{z_{i}:d_{1}+r_{0}k\le d(y_{0},z_{i})\le d_{1}+r_{0}\left(k+1\right)}B_{t_{0}}\delta_{y_{0}}\left(z_{i}\right)\\
 & \le\sum_{k=0}^{\infty}D_{d_{1}+r_{0}\left(k+1\right)}\cdot e^{-c\left(d_{1}+r_{0}k\right)^{2}}\ll_{r_{0}}\sum_{k=0}^{\infty}e^{d_{1}+d_{1}+r_{0}\left(k+1\right)-c\left(d_{1}+r_{0}k\right)^{2}}\\
 & \ll_{r_{0},t_{0}}e^{2d_{1}-c'd_{1}^{2}}.
\end{align*}

For some constant $c'>0$ depending on $r_{0},t_{0}$. Finally, using
the fact that the volume of $x\in X$ with $d\left(x_{0},x\right)\le d_{1}$
is $\ll e^{d_{1}}$,
\[
\Vert B_{t_{0}}\delta_{x_{0}}\Vert_{2}^{2}=\intop_{X}\left|B_{t_{0}}\delta_{x_{0}}(x)\right|^{2}dx\ll_{t_{0},r_{0}}\intop_{d_{1}\ge0}e^{2\left(2d_{1}-c'd_{1}^{2}\right)}\cdot e^{d_{1}}dt\ll_{t_{0},r_{0}}1,
\]
and the lemma is proved.
\end{proof}

\section{\label{sec:Proof-of-Theorem2}Proof of Theorem \ref{thm2}}
\begin{proof}
of Theorem \ref{thm2}. We prove the claim for the discrete random
walk only. The proof for the Brownian motion is analogous, and exploits
Corollary \ref{prop:Integral and CLT for Brownian} instead of Corollary
\ref{cor:integral and CLT for discrete} and the Brownian motion part
of Lemma \ref{lem:Technical Lemma2} instead of its discrete part.

Suppose $rk\alpha<R_{X}-\lambda\sqrt{R_{X}}$. Let $Y=\left\{ y\in X:d\left(x_{0},y\right)>R_{X}-\frac{\lambda}{2}\sqrt{R_{X}}\right\} $.
As $R_{X}\to\infty$, 
\[
1\ge\mu\left(Y\right)/\mu\left(X\right)\ge\left(\mu\left(X\right)-\mu\left(B\left(R_{X}-\frac{\lambda}{2}\sqrt{R_{X}}\right)\right)\right)/\mu\left(X\right)\to1,
\]
so $\mu\left(Y\right)/\mu\left(X\right)\to1$. 

By Corollary \ref{cor:integral and CLT for discrete}, there exists
$c_{1}(r_{1}),C_{1}\left(r_{1}\right)$ such that for $R_{X}$ large
enough,
\begin{align*}
\intop_{Y}\left|A_{r}^{k}\delta_{x_{0}}(x)\right|d\mu & <C_{1}e^{-c_{1}\lambda^{2}}\\
\intop_{X-Y}\left|A_{r}^{k}\delta_{x_{0}}(x)\right|d\mu & >1-C_{1}e^{-c_{1}\lambda^{2}}.
\end{align*}
Therefore,
\begin{align*}
\n{A_{r}^{k}b_{x_{0},r_{0}}-\pi} & =\intop_{Y}\left|A_{r}^{k}\delta_{x_{0}}(x)-\pi(x)\right|d\mu+\intop_{X-Y}\left|A_{r}^{k}\delta_{x_{0}}(x)-\pi(x)\right|d\mu\\
 & \ge\intop_{Y}\left|\pi(x)\right|d\mu-\intop_{Y}\left|A_{r}^{k_{n}}\delta_{x_{0}}(x)\right|d\mu+\intop_{X-Y}\left|A_{r}^{k}\delta_{x_{0}}(x)\right|-\intop_{X-Y}\left|\pi(x)\right|d\mu\\
 & \ge\mu\left(X\right)^{-1}\mu\left(Y\right)-C_{1}e^{-c_{1}\lambda^{2}}+1-C_{1}e^{-c_{1}\lambda^{2}}-\mu\left(X\right)^{-1}\mu\left(X-Y\right)\\
 & =2\mu\left(X\right)^{-1}\mu\left(Y\right)-2C_{1}e^{-c_{1}\lambda^{2}},
\end{align*}
and the first bound follows by letting $R_{X}\to\infty$. Notice that
it does not require the Ramanujan assumption.

For the second bound, recall that we may write $A_{r_{1}}^{k}b_{x_{0},r_{0}}(x)=\intop_{r}\left(A_{r}b_{x_{0},r_{0}}\right)(x)dm_{k}(r)$.
Assume that $kr_{1}\alpha>R_{X}+\lambda\sqrt{R_{X}}$. By Corollary
\ref{cor:integral and CLT for discrete}, for some $c_{2}\left(r_{1}\right)>0$,
for $R_{X}$ large enough (depending on $r_{1},\lambda$),
\[
\intop_{r<R_{X}+\frac{\lambda}{2}\sqrt{R_{X}}}dm_{k}r\ll_{r_{1}}e^{-c_{2}\lambda^{2}}.
\]
As $x_{0}\in X$ has an injectivity radius at least $r_{0}$, by Lemma
\ref{lem:Technical Lemma2}, there exists a constant $C_{3}=C\left(r_{0},r_{1}\right)$
such that $\nn{A_{r_{1}}^{3}\delta_{x_{0}}}\le C_{3}$. 

For every $f\in L_{2}\left(X\right)$, Cauchy\textendash Schwartz
inequality implies that $\n f\le\sqrt{\mu\left(X\right)}\nn f$. Therefore,
\begin{align*}
\n{A_{r_{1}}^{k}f-\pi} & \le\intop_{r}\n{A_{r}\delta_{x_{0}}-\pi}dm_{k}r=\\
 & =\intop_{r<R_{X}+\frac{\lambda}{2}\sqrt{R_{X}}}\n{A_{r}b_{x_{0},r_{0}}-\pi}dm_{k}r+\intop_{r\ge R_{X}+\frac{\lambda}{2}\sqrt{R_{X}}}\n{A_{r}b_{x_{0},r_{0}}-\pi}dm_{k}r\\
 & \le\intop_{r<R_{X}+\frac{\lambda}{2}\sqrt{R_{X}}}2dm_{k}r+\intop_{r\ge R_{X}+\frac{\lambda}{2}\sqrt{R_{X}}}\mu\left(X\right)^{1/2}\nn{A_{r}\left(b_{x_{0},r_{0}}-\pi\right)}dm_{k}r\\
 & \ll_{r_{1}}e^{-c_{2}\lambda^{2}}+\intop_{r\ge R_{X}+\frac{\lambda}{2}\sqrt{R_{X}}}\mu\left(X\right)^{1/2}\left(r+1\right)e^{-r/2}dm_{k}r\\
 & \ll e^{-c_{2}\lambda^{2}}+\mu\left(X\right)^{1/2}\left(R_{X}+\frac{\lambda}{2}\sqrt{R_{X}}+1\right)e^{-\frac{1}{2}\left(R_{X}+\frac{\lambda}{2}\sqrt{R_{X}}\right)}\\
 & \ll e^{-c_{2}\lambda^{2}}+\mu\left(X\right)^{1/2}e^{-\frac{1}{2}R_{X}}\left(R_{X}+\frac{\lambda}{2}\sqrt{R_{X}}+1\right)e^{-\frac{1}{4}\sqrt{R_{X}}}\\
 & \to_{_{R_{X}\to\infty}}e^{-c_{2}\lambda^{2}},
\end{align*}
and the second bound follows.
\end{proof}
\begin{rem}
The theorem holds for $\lambda>0$ such that $R_{X}\gg_{r_{0},r_{1},\lambda}1$.
In other words, $R_{X}$ has to be larger than some constant $R(r_{0},r_{1},\lambda)$
that depends on $r_{0},r_{1}$ and $\lambda$. By fixing $r_{0},r_{1}$,
one can find the relation between this constant and $\lambda$, namely,
it should hold that $\lambda=o\left(\sqrt{R(r_{0},r_{1},\lambda)}\right)$
and $\ln\left(R(r_{0},r_{1},\lambda)\right)=o\left(\lambda\sqrt{R(r_{0},r_{1},\lambda)}\right)$.
\end{rem}

\section{\label{sec:Lp-expanders}$L^{p}$-Bounds}

The above results assume $X$ to be Ramanujan. However, similar results
can be proved in a general setting. In this section and the next one,
we discuss Theorem \ref{thm1} only, but similarly one can elaborate
on Theorem \ref{thm2} as well. 

The following lemma is well known (see \cite{lax1982asymptotic}):
\begin{lem}
\label{lem:Lax-Phillips}The spectrum of the $\Delta$ on $L_{0}^{2}\left(X\right)$
below $1/4$ is discrete, and corresponds to a finite number of eigenvalues
with multiplicities.
\end{lem}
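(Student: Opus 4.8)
The plan is to reduce to the fact that $\Delta$ on $L^2_0(X)$ has continuous spectrum only above $1/4$ and to a compactness argument for the part below $1/4$. First I would note that since the bottom of the essential spectrum of $\Delta$ on $L^2_0(X)$ equals $1/4$ — this is the classical statement that the continuous spectrum of the Laplacian on a finite-volume hyperbolic surface coincides with $[1/4,\infty)$, coming from the analysis of Eisenstein series in the cuspidal directions (in the cocompact case the whole spectrum is discrete, so the statement is trivial) — everything strictly below $1/4$ lies in the discrete spectrum. By definition the discrete spectrum consists of isolated eigenvalues of finite multiplicity. So the only remaining point is that there are only \emph{finitely many} of them in $[0,1/4)$.

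For finiteness I would argue by contradiction: suppose there are infinitely many eigenvalues $\lambda_1\le\lambda_2\le\cdots$ in $[0,1/4)$, necessarily accumulating only at $1/4$, with $L^2$-orthonormal eigenfunctions $f_j$. The key step is to show that this sequence $\{f_j\}$ is precompact in $L^2_0(X)$, which contradicts orthonormality (an orthonormal sequence has no convergent subsequence). Precompactness follows from a uniform $H^1$-bound plus a Rellich-type compactness input: from $\Delta f_j=\lambda_j f_j$ with $\lambda_j<1/4$ and $\|f_j\|_2=1$ we get $\|\nabla f_j\|_2^2=\langle \Delta f_j,f_j\rangle=\lambda_j\le 1/4$, so the $f_j$ are bounded in $H^1(X)$. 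On a compact hyperbolic surface $H^1\hookrightarrow L^2$ is compact by Rellich and we are done immediately; in the finite-area non-compact case one must control the cuspidal ends — here one uses that an $H^1$-function with $\Delta$-eigenvalue below $1/4$ decays in the cusps (its zeroth Fourier coefficient along the cusp is a combination of $y^{1/2\pm\sqrt{1/4-\lambda}}$ and $L^2$-integrability forces the non-decaying branch to vanish), giving a uniform tail bound and hence compactness of the truncated family. This is the standard argument underlying the discreteness of the cuspidal spectrum.

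Alternatively, and more cleanly for the purposes of this paper, I would simply cite the Lax--Phillips theory \cite{lax1982asymptotic}, which establishes exactly that the point spectrum of $\Delta$ on $L^2_0(\Gamma\backslash\mathbb H)$ in the interval $[0,1/4)$ is finite with finite multiplicities, via the spectral theory of the wave equation and the geometry of the cusps; in the cocompact case one may instead invoke the general fact that the Laplacian on a closed Riemannian manifold has purely discrete spectrum. The main obstacle, and the only genuinely nontrivial point, is the behavior in the cusps: bounding the contribution of the continuous spectrum away from $1/4$ and showing the relevant eigenfunctions decay there. Everything else is soft functional analysis (the spectral theorem, Rellich compactness, orthonormality).
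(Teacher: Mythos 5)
Your proposal is correct and matches the paper's treatment: the paper gives no proof at all, simply citing Lax--Phillips \cite{lax1982asymptotic} as "well known," which is exactly the clean route you identify at the end. Your additional sketch (discreteness below the continuous spectrum $[1/4,\infty)$, $H^1$-boundedness plus Rellich compactness, and decay of the zeroth Fourier coefficient in the cusps for eigenvalues below $1/4$) is the standard argument behind that citation and is sound.
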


Eigenvalues of $\Delta$ strictly below $1/4$ are called exceptional.
A nice way to measure how far is a representation $V$ of $G$ from
being tempered is to ask what is the minimal $p\geq2$ such that the
$K$-finite matrix coefficients of $G$ on $V$ lie in $L^{p}\left(G\right)$.
The following proposition relates this property to the spectra of
the Laplacian and of the operators $A_{r}$. See \cite{kamber2016lpgraph}
for the corresponding result on graphs.
\begin{prop}
\label{prop:Lp proposition}The following are equivalent for $p\ge2$: 
\begin{enumerate}
\item For every $r\ge0$, the norm of $A_{r}$ on $V$ is bounded by $\left(r+1\right)e^{-r/p}$.
\item Every matrix coefficient of a subrepresentation of $G$ on $L_{0}^{2}\left(\Gamma\backslash G\right)$
with $K=PSO_{2}\left(\R\right)$ fixed vectors is in $L^{p+\epsilon}\left(G\right)$
for every $\epsilon>0$.
\item The spectrum of $\Delta$ on $L_{0}^{2}\left(X\right)$ is bounded
from below by $\frac{1}{4}-\left(\frac{1}{2}-p^{-1}\right)^{2}$.
\end{enumerate}
The equivalence is also true for every $\Delta$-invariant closed
subspace $V\subset L_{0}^{2}\left(X\right)$, where in $(2)$ we look
at the $G-$subrepresentation generated by $V$.
\end{prop}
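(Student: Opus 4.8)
The plan is to prove the cycle of implications $(3)\Rightarrow(1)\Rightarrow(2)\Rightarrow(3)$, working throughout with the spherical Plancherel decomposition of $V$ (or of a $\Delta$-invariant closed subspace), which reduces everything to an estimate on a single spherical function. Recall from Section \ref{sec:Preliminaries-and-Notation} that on the $K$-fixed part the spectrum of $\Delta$ is governed by the unitary dual parameter: an eigenvalue $\lambda=\tfrac14+s^2$ of $\Delta$ corresponds to $A_r$ acting by $\phi(s,r)=\varphi_{\frac12+is}(e^{-r}i)=P_{-\frac12+is}(\cosh r)$, where for exceptional eigenvalues $\lambda<\tfrac14$ one has $s=i\sigma$ with $\sigma\in(0,\tfrac12)$, and $\lambda\ge\tfrac14-(\tfrac12-p^{-1})^2$ translates exactly into $|\sigma|\le\tfrac12-p^{-1}$, i.e. the complementary-series parameter lies in the band corresponding to $L^{p+\epsilon}$-integrability.

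For $(3)\Rightarrow(1)$: the norm of $A_r$ on $V$ equals $\sup|\phi(s,r)|$ over the $s$ occurring in the spectrum of $V$. On the principal series ($s\in\R$) we already have $|\phi(s,r)|\le(r+1)e^{-r/2}\le(r+1)e^{-r/p}$ from the Harish-Chandra bound proved in the excerpt. On the complementary part, parametrized by $\sigma=-is\in(0,\tfrac12-p^{-1}]$, one uses the integral representation $P_{-\frac12+\sigma}(\cosh r)$ and estimates it by $(r+1)e^{(\sigma-\frac12)r}\le(r+1)e^{-r/p}$; this is the standard bound on the complementary-series spherical function, proved by the same Taylor-expansion-of-$\cosh$ trick used for the Harish-Chandra estimate (now with a real exponential weight $e^{\sigma r(1-x)}$ inside the integral instead of a cosine), giving the decay rate dictated by the worst exceptional $\sigma$. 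For $(1)\Rightarrow(2)$: this is verbatim the argument of Proposition \ref{prop:ArBoundsImpliesRamanujan}, replacing the exponent $2+\epsilon$ by $p+\epsilon$. Using the Cartan decomposition $dg=\sinh r\,dk'\,dk\,dr$ and $\langle \tilde f, g\tilde f'\rangle=\langle f,A_r f'\rangle$ along the $A$-part, matrix coefficients lie in $L^{p+\epsilon}(G)$ iff $\int_{r\ge0} e^r|\langle f,A_r f'\rangle|^{p+\epsilon}\,dr<\infty$; plugging in $|\langle f,A_rf'\rangle|\le(r+1)e^{-r/p}\|f\|\,\|f'\|$ makes the integrand $(r+1)^{p+\epsilon}e^{r}e^{-(1+p\epsilon')r}$ for a suitable $\epsilon'>0$, which converges. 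For $(2)\Rightarrow(3)$: if some exceptional eigenvalue violated the bound, i.e. some $\sigma>\tfrac12-p^{-1}$ occurred, the corresponding complementary-series subrepresentation would have a spherical matrix coefficient $\asymp e^{(\sigma-\frac12)r}$ on the $A$-part, and $\int_{r\ge0}e^{r}e^{(p+\epsilon)(\sigma-\frac12)r}\,dr$ diverges for $\epsilon$ small — contradicting $(2)$; here one invokes Lemma \ref{lem:Lax-Phillips} to know the exceptional spectrum is discrete, so it is literally realized by genuine complementary-series constituents.

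The main obstacle is the sharp two-sided control of the complementary-series spherical function $P_{-\frac12+\sigma}(\cosh r)$: the upper bound $(r+1)e^{(\sigma-\frac12)r}$ needed for $(3)\Rightarrow(1)$ and the matching lower bound $\gg e^{(\sigma-\frac12)r}$ needed for $(2)\Rightarrow(3)$. The upper bound follows the template already in the paper, but one must be careful that the constant in $(r+1)$ is genuinely uniform and that the polynomial factor is not worse than linear; the asymptotic lower bound is classical (Harish-Chandra's expansion of the spherical function, $\varphi_{\frac12+i\sigma}(e^{-r}i)\sim c(\sigma)e^{(\sigma-\frac12)r}$ with $c(\sigma)\neq0$ for $\sigma\in(0,\tfrac12)$), and it is cleanest to quote it from \cite{Helgason2000} or \cite{Terras2013} rather than reprove it. The last sentence of the statement — that the equivalence holds for an arbitrary $\Delta$-invariant closed subspace $V$ — requires no extra work: all three conditions are spectral, and the $G$-subrepresentation generated by a $\Delta$-invariant (equivalently, $A_r$-invariant for all $r$, equivalently $K$-bi-invariant-convolution-invariant) subspace of $L_0^2(X)$ has the same spherical spectral support as $V$, so the same Plancherel reduction applies.
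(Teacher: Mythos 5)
Your proposal is correct and follows essentially the same route as the paper: the same spherical-function reduction with the complementary-series parameter, the same Taylor-expansion upper bound giving $(3)\Rightarrow(1)$, the same Cartan-decomposition integrability criterion (the argument of Proposition \ref{prop:ArBoundsImpliesRamanujan} with exponent $p+\epsilon$) for $(1)\Rightarrow(2)$, and the same use of Lemma \ref{lem:Lax-Phillips} plus a lower bound on the complementary-series matrix coefficient to close the loop. The only cosmetic difference is that you quote the classical Harish-Chandra asymptotic $\varphi_{\frac12+i\sigma}\sim c(\sigma)e^{(\sigma-\frac12)r}$ for the lower bound, whereas the paper derives a sufficient lower bound elementarily by the same $\cosh$-Taylor trick.
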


\begin{proof}
Let $V$ be a $\Delta$-invariant closed subspace of $L_{0}^{2}\left(X\right)$.

The complementary series is determined by a real parameter $t$, $0\le t\le1$,
with corresponding spherical function $\varphi_{t}$. Write $t=\frac{1}{2}+s_{t}^{\prime}$
and $p_{t}=\left(\frac{1}{2}-\left|s_{t}^{\prime}\right|\right)^{-1}$
(with the convention that $0^{-1}=\infty$). The corresponding eigenvalue
of the $\Delta$ on $\varphi_{t}$ is 
\[
\lambda_{t}=t\left(1-t\right)=\frac{1}{4}-s_{t}^{\prime2}=\frac{1}{4}-\left(\frac{1}{2}-p_{t}^{-1}\right)^{2}.
\]
The eigenvalue of $A_{r}$ on $\varphi_{t}$ is:
\[
\varphi_{t}(r)=\frac{1}{\sqrt{2}\pi}r\intop_{-1}^{1}\frac{\exp\left(s_{t}^{\prime}rx\right)}{\sqrt{\cosh r-\cosh rx}}dx.
\]
Recall that for $r>0$, $\cosh r-\cosh\left(rx\right)\ge\left(\cosh r-1\right)\left(1-x^{2}\right)$
holds. Hence
\begin{align*}
\left|\varphi_{t}(r)\right| & =\frac{1}{\pi\sqrt{2}}r\left|\intop_{-1}^{1}\frac{\exp\left(s_{t}^{\prime}rx\right)}{\sqrt{\cosh r-\cosh rx}}dx\right|\le\frac{1}{\pi\sqrt{2}}r\frac{1}{\sqrt{\cosh r-1}}\intop_{-1}^{1}\frac{\exp\left(s_{t}^{\prime}rx\right)}{\sqrt{1-x^{2}}}dx\\
 & \le\frac{1}{\sqrt{2}\pi}r\frac{\exp\left(\left|s_{t}^{\prime}\right|r\right)}{\sqrt{\cosh r-1}}\intop_{-1}^{1}\frac{1}{\sqrt{1-x^{2}}}dx=\frac{1}{\sqrt{2}}r\left(\cosh r-1\right)^{-1/2}\exp\left(\left|s_{t}^{\prime}\right|r\right)\\
 & \le\left(r+1\right)e^{-r\left(\frac{1}{2}-\left|s_{t}^{\prime}\right|\right)}=\left(r+1\right)e^{-r/p_{t}}.
\end{align*}
This proves an implication from $(3)$ to $(1)$.

We also want to give a lower bound on $\left|\varphi_{t}(r)\right|$.
Let $f(x)=\cosh r-\cosh\left(r(1-x)\right)$. For $x\ge0$, by the
Taylor series $f(x)=xr\sinh r-\frac{1}{2}r^{2}x^{2}\cosh\left(r\left(1-x'\right)\right)$,
$0\le x'\le x$, so $f(x)\le xr\sinh r$. So for a fixed $\epsilon>0$,
and $r\ge1$,
\begin{align}
\varphi_{t}(r) & =\frac{1}{\sqrt{2}\pi}r\intop_{-1}^{1}\frac{\exp\left(s_{t}^{\prime}rx\right)}{\sqrt{\cosh r-\cosh rx}}dx\gg r\intop_{0}^{2}\frac{\exp\left(s_{t}^{\prime}r\left(1-x\right)\right)}{\sqrt{\cosh r-\cosh r\left(1-x\right)}}dx\nonumber \\
 & \ge r\intop_{0}^{\epsilon}\frac{\exp\left(s_{t}^{\prime}r\left(1-x\right)\right)}{\sqrt{\cosh r-\cosh r\left(1-x\right)}}dx\nonumber \\
 & \ge\frac{\sqrt{r}e^{s_{t}^{\prime}r}}{\sqrt{\sinh r}}\intop_{0}^{\epsilon}\frac{\exp\left(-s_{t}^{\prime}rx\right)}{\sqrt{x}}dx\gg\frac{e^{s_{t}^{\prime}r\left(1-\epsilon\right)}}{\sqrt{\sinh r}}\sqrt{\epsilon}\gg\sqrt{\epsilon}e^{-r\left(\frac{1}{2}-\left|s_{t}^{\prime}\right|(1-\epsilon)\right)}.\label{eq:Lp pf2}
\end{align}
This implies that if for every $r\ge0$, $\varphi_{t}(r)\le\left(r+1\right)e^{-r\left(\frac{1}{2}-S\right)}$
then $\left|s_{t}^{\prime}\right|\le S$. This proves the implication
from $(1)$ to $(3)$. 

Arguing as in Proposition \ref{prop:ArBoundsImpliesRamanujan}, we
see that $(2)$ is equivalent to:
\begin{itemize}
\item For every $f,f'\in V$ and for every $\epsilon>0$,
\begin{equation}
\int_{r\ge0}e^{r}\left|\left\langle f,A_{r}f'\right\rangle \right|^{p+\epsilon}dr<\infty\label{eq:Lp pf3}
\end{equation}
\end{itemize}
We can immediately see that as in Proposition \ref{prop:ArBoundsImpliesRamanujan},
this proves the implication from $(1)$ to $(2)$. 

Assume now that $(1)$ and $(3)$ do not hold for $p=p_{0}\ge2$.
By Lemma \ref{lem:Lax-Phillips}, there is an eigenvector $f\in V$
which satisfies $\left\langle f,A_{r}f\right\rangle =\varphi_{t}\left(r\right)$,
for some $\varphi_{t}$, with $p_{t}>p_{0}$. By Equation \ref{eq:Lp pf2},
for some $\delta>0$ and for $r$ large enough $\left|\left\langle f,A_{r}f\right\rangle \right|\gg_{\delta}e^{-r\left(1/p_{0}+\delta\right)}$.
Then Equation \ref{eq:Lp pf3} does not hold, and $(2)$ does not
hold.
\end{proof}
By Lemma \ref{lem:Lax-Phillips}, for each $X$ there is a minimal
$p_{0}$ satisfying the equivalent conditions of Proposition \ref{prop:Lp proposition}.
Denote it by $p_{0}\left(X\right)$. For example, Selberg's lower
bound $3/16$ implies that for each $X$ corresponding to a congruence
subgroup of $SL_{2}\left(\Z\right)$, $p_{0}\left(X\right)\le4$.
Further improvements (see e.g. \cite{sarnak1995selberg}) improve
this bound as well. Without any further information about $X$, we
can say the following:
\begin{thm}
\label{thm:Lp-expanders}Let $r_{0}>0$ be fixed. Let $p=p_{0}\left(X\right)$
and assume $R_{X}\ge1$. Let $x_{0}\in X$ be a point with injectivity
radius at least $r_{0}$. Then for every $\gamma>0$
\[
\mu_{X}\left(x\in X:d_{X}\left(x,x_{0}\right)\ge\frac{p}{2}\left(R_{X}+\gamma\ln\left(R_{X}\right)\right)\right)/\mu\left(X\right)\ll_{p,r_{0}}\left(1+\gamma^{2}\right)R_{X}^{2-\gamma}.
\]
\end{thm}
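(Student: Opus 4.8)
The plan is to reproduce the upper-bound argument of Theorem~\ref{thm1} (Section~\ref{sec:Proof-of-Theorem1}) almost verbatim, replacing the Harish--Chandra/Ramanujan estimate $\nn{A_{r}}\le(r+1)e^{-r/2}$ on $L_{0}^{2}(X)$ by the $L^{p}$-estimate $\nn{A_{r}}\le(r+1)e^{-r/p}$ for $p=p_{0}(X)$, i.e.\ condition $(1)$ of Proposition~\ref{prop:Lp proposition}. One first checks that condition $(1)$ is genuinely attained at $p=p_{0}(X)$, not only for $p>p_{0}(X)$: by Lemma~\ref{lem:Lax-Phillips} the exceptional spectrum is finite, so there is a largest exceptional eigenvalue $\lambda$ (or none, in which case $p_{0}(X)=2$), which is the value pinning $p_{0}(X)$ through $\lambda=\frac{1}{4}-\bigl(\frac{1}{2}-p_{0}^{-1}\bigr)^{2}$; hence condition $(3)$, and therefore the equivalent condition $(1)$, holds with this $p$. (When $X$ is Ramanujan, $p_{0}(X)=2$ and one recovers the bound of Theorem~\ref{thm1}.)

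Concretely, I would set $r'=\frac{p}{2}\bigl(R_{X}+\gamma\ln R_{X}\bigr)-r_{0}$; this is $\ge0$ for $R_{X}\ge1$ once $r_{0}\le p/2$, and if $r_{0}>p/2$ the hypothesis forces $R_{X}$ to be large (since $\mu(X)$ exceeds the volume of a hyperbolic ball of radius $2r_{0}$), so $r'\ge0$ in any case. Let $b_{x_{0},r_{0}}$ be the $L^{1}$-normalized indicator of $B_{x_{0}}(r_{0})\subset X$, which is well defined by the injectivity-radius hypothesis and satisfies $\nn{b_{x_{0},r_{0}}}=\mu(B_{r_{0}})^{-1/2}\ll_{r_{0}}1$. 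Since the support of $A_{r'}b_{x_{0},r_{0}}$ lies inside $\{x:d_{X}(x,x_{0})\le r'+r_{0}\}$ and $r'+r_{0}=\frac{p}{2}(R_{X}+\gamma\ln R_{X})$, the set $Y_{>}=\{x:d_{X}(x,x_{0})\ge\frac{p}{2}(R_{X}+\gamma\ln R_{X})\}$ is contained, up to the $\mu$-null sphere $\{d_{X}(\cdot,x_{0})=r'+r_{0}\}$, in $Z=\{x:A_{r'}b_{x_{0},r_{0}}(x)=0\}$. On $Z$ one has $\bigl|(A_{r'}b_{x_{0},r_{0}}-\pi)(x)\bigr|=\mu(X)^{-1}$, so $\mu(Y_{>})\le\mu(Z)\le\mu(X)^{2}\nn{A_{r'}b_{x_{0},r_{0}}-\pi}^{2}$. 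As $b_{x_{0},r_{0}}-\pi\in L_{0}^{2}(X)$ and $A_{r'}$ fixes constants, condition $(1)$ of Proposition~\ref{prop:Lp proposition} gives
\[
\nn{A_{r'}b_{x_{0},r_{0}}-\pi}=\nn{A_{r'}(b_{x_{0},r_{0}}-\pi)}\le(r'+1)e^{-r'/p}\nn{b_{x_{0},r_{0}}-\pi}\ll_{r_{0}}(r'+1)e^{-r'/p}.
\]
Substituting $r'=\frac{p}{2}(R_{X}+\gamma\ln R_{X})-r_{0}$ yields $e^{-r'/p}=e^{r_{0}/p}e^{-R_{X}/2}R_{X}^{-\gamma/2}\ll_{p,r_{0}}e^{-R_{X}/2}R_{X}^{-\gamma/2}$ and $r'+1\ll_{p}R_{X}(1+\gamma)$ (using $\ln R_{X}\le R_{X}$ for $R_{X}\ge1$), hence $\nn{A_{r'}b_{x_{0},r_{0}}-\pi}\ll_{p,r_{0}}(1+\gamma)e^{-R_{X}/2}R_{X}^{1-\gamma/2}$. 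Combined with $\mu(X)\asymp e^{R_{X}}$ (valid for $R_{X}\ge1$), this gives
\[
\mu(Y_{>})\ll_{p,r_{0}}\mu(X)^{2}(1+\gamma)^{2}e^{-R_{X}}R_{X}^{2-\gamma}\ll\mu(X)(1+\gamma^{2})R_{X}^{2-\gamma},
\]
and dividing by $\mu(X)$ completes the proof.

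I do not expect a serious obstacle: the computation is the same bookkeeping as in Section~\ref{sec:Proof-of-Theorem1}, and the one new input, Proposition~\ref{prop:Lp proposition}, is already established. The only point deserving a careful sentence is the claim that the estimate in condition $(1)$ is sharp at $p=p_{0}(X)$, which was addressed above via the finiteness of the exceptional spectrum (Lemma~\ref{lem:Lax-Phillips}); and, exactly as for Theorem~\ref{thm1}, the resulting bound is informative only once $(1+\gamma^{2})R_{X}^{2-\gamma}<1$, being vacuous otherwise.
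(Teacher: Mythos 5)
Your proposal is correct and follows essentially the same route as the paper, which likewise just reruns the proof of Theorem \ref{thm1} with $r'=\frac{p}{2}\left(R_{X}+\gamma\ln R_{X}\right)-r_{0}$ and the bound $\left(r'+1\right)e^{-r'/p}$ from condition $(1)$ of Proposition \ref{prop:Lp proposition}; your extra care about the support of $A_{r'}b_{x_{0},r_{0}}$ and the attainment of condition $(1)$ at $p=p_{0}(X)$ only tightens points the paper leaves implicit. One small wording slip: the value pinning $p_{0}(X)$ is the \emph{smallest} nontrivial Laplacian eigenvalue (the most exceptional one), not the largest exceptional eigenvalue, but this does not affect the argument since $p_{0}(X)$ is by definition the minimal $p$ for which the equivalent conditions hold.
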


\begin{proof}
The proof is essentially the same as the proof of Theorem \ref{thm1},
and we only write the differences. Instead of choosing $r'=R_{X}+\gamma\ln\left(R_{X}\right)-r_{0}$,
choose $r'=\frac{p}{2}\left(R_{X}+\gamma\ln\left(R_{X}\right)\right)-r_{0}$.
Then:
\begin{align*}
\nn{A_{r^{\prime}}b_{x_{0},r_{0}}-\pi} & =\nn{A_{r^{\prime}}\left(b_{x_{0},r_{0}}-\pi\right)}\le\left(r'+1\right)e^{-r^{\prime}/p}\nn{b_{x_{0},r_{0}}-\pi}\\
 & \ll_{r_{0}}\left(r'+1\right)e^{-\frac{1}{2}R_{X}-\frac{1}{2}\gamma\ln\left(R_{X}\right)+r_{0}/p}\\
 & \ll_{r_{0},p}\frac{\frac{p}{2}\left(R_{X}+\gamma\ln\left(R_{X}\right)\right)-r_{0}}{R_{X}}\mu\left(X\right)^{-1/2}e^{-\frac{1}{2}(\gamma-2)\ln\left(R_{X}\right)}\\
 & \ll_{r_{0},p}\left(1+\gamma\right)\mu\left(X\right)^{-1/2}e^{-\frac{1}{2}(\gamma-2)\ln\left(R_{X}\right)}.
\end{align*}
The rest of the proof is the same.
\end{proof}

\section{\label{sec:Coverings}Covers}

Let $X_{0}=\Gamma_{0}\backslash\H$. Then a finite index subgroup
$\Gamma_{X}\subset\Gamma_{0}$ defines a cover $X=\Gamma_{X}\backslash\H$
of $X_{0}$, with cover map $\rho:X\to X_{0}$. The pull-back $\rho^{*}:L^{2}\left(X_{0}\right)\to L^{2}\left(X\right)$
defines a closed subspace $\rho^{*}L^{2}\left(X_{0}\right)\subset L^{2}\left(X\right)$.
Denote the orthogonal complement of $\rho^{*}L^{2}\left(X_{0}\right)$
in $L^{2}\left(X\right)$ by $L^{2}\left(X/X_{0}\right).$

For $p>2$, denote by $m\left(X,p\right)$ the dimension of the space
spanned by eigenvectors of $L^{2}\left(X/X_{0}\right)$ whose matrix
coefficients are not in $L^{p'}$ for every $p'<p$ but are in $L^{p'}$
for $p'>p$. Denote also $M(X,p)=\sum_{p'\ge p}m\left(X,p'\right)$.

A cover $\rho:X\to X_{0}$ is called normal if $\Gamma_{X}\subset\Gamma_{X_{0}}$
is a normal subgroup. Equivalently, a cover $\rho:X\to X_{0}$ is
normal if there exists a group $H$ acting on $X$ such that $\rho\left(x\right)=\rho\left(y\right)$
if and only if $x$ and $y$ are on the same $H$-orbit. We call $H$
the cover group.

Our main result about covers is as the following theorem. Note that
if $X$ is an $N$-cover of $X_{0}$ then $\mu\left(X\right)=N\cdot\mu\left(X_{0}\right)$.
Therefore, $\mu\left(X\right)\asymp_{X_{0}}N$ and $R_{X}=\ln\left(N\right)+O_{X_{0}}\left(1\right)$. 
\begin{thm}
\label{thm:Normal Coverings Theorem}Let $r_{0}>0$ be fixed, and
let $X_{0}$ be a fixed quotient. Let $\rho_{q}:X_{q}\to X_{0}$ be
family of normal $N_{q}$-covers, with $N_{q}\to\infty$ as $q\to\infty$.

Assume that $g:\mathbb{R}_{+}\to\mathbb{R}_{+}$ is non-decreasing
function satisfying:
\begin{enumerate}
\item For some fixed $\delta>2$ and for $R$ large enough, $g(R)\ge R+\delta\ln R$.
\item Either
\begin{equation}
g^{3}\left(\ln\left(N_{q}\right)\right)\sum_{p:m\left(X_{q},p\right)\ne0}e^{-2g\left(\ln\left(N_{q}\right)\right)/p_{i}}m\left(X_{q},p_{i}\right)=o(1),\label{eq:Density requirement0}
\end{equation}
or
\begin{align}
g^{3}\left(\ln\left(N_{q}\right)\right)\intop_{2}^{\infty}M\left(X_{q},p\right)e^{-2g\left(\ln\left(N_{q}\right)\right)/p}p^{-2}dp & =o\left(1\right),\,\text{and}\label{eq:Density requirement}\\
g^{2}\left(\ln\left(N_{q}\right)\right)\lim_{p\to2,p>2}M\left(X_{q},p\right)e^{-g\left(\ln\left(N_{q}\right)\right)} & =o(1)\label{eq:Density requirement2}
\end{align}
\end{enumerate}
For every $q$, let $x_{0}^{(q)}\in X_{q}$ be a point such that its
projection $\rho_{q}(x_{0}^{(q)})$ to $X_{0}$ has injectivity radius
at least $r_{0}$. Then
\[
\mu\left(x\in X_{q}:d_{X_{q}}\left(x,x_{0}^{(q)}\right)\ge g\left(\ln\left(N_{q}\right)\right)\right)/\mu\left(X_{q}\right)=o\left(1\right),
\]
where the implied constant depends on $X_{0},\left\{ X_{q}\right\} ,r_{0}$
and $g$.
\end{thm}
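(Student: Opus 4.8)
The plan is to run the argument from the proof of Theorem \ref{thm1} on each $X_q$, but to replace the single spectral bound for $A_{r'}$ on $L^2_0(X_q)$ by a three-way decomposition that isolates the (fixed) contribution of the base surface $X_0$, the tempered part, and the new exceptional eigenvalues, the last being controlled by the density hypothesis.

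Set $R=\ln N_q$ and $r'=g(R)-r_0$ (so $r'>0$ and $r'\to\infty$ for $q$ large), and let $b=b_{x_0^{(q)},r_0}$. Since $\rho_q(x_0^{(q)})$ has injectivity radius $\ge r_0$ in $X_0$ and a cover only increases the injectivity radius, $x_0^{(q)}$ has injectivity radius $\ge r_0$ in $X_q$; hence $b$ is well defined, lies in $L^2(X_q)$, and is supported in $\bar B_{r_0}(x_0^{(q)})$, so $A_{r'}b$ is supported in $\bar B_{g(R)}(x_0^{(q)})$ and vanishes on $\{d_{X_q}(x,x_0^{(q)})>g(R)\}$, which agrees with $Y_>:=\{d_{X_q}(\cdot,x_0^{(q)})\ge g(R)\}$ up to a null set. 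Then, exactly as in the proof of Theorem \ref{thm1} — using $b-\pi\perp\pi$ and $|A_{r'}b-\pi|=1/\mu(X_q)$ on $\{A_{r'}b=0\}$ — one gets
\[
\mu(Y_>)/\mu(X_q)\ \le\ \mu(X_q)\,\nn{A_{r'}(b-\pi)}^2\ \asymp_{X_0}\ N_q\,\nn{A_{r'}(b-\pi)}^2,
\]
and everything reduces to showing $N_q\nn{A_{r'}(b-\pi)}^2=o(1)$.

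Decompose $L^2_0(X_q)=\rho_q^*L^2_0(X_0)\oplus L^2(X_q/X_0)$ and split $L^2(X_q/X_0)$ into its tempered part $W_{\mathrm{temp}}$ (its continuous spectrum together with the new tempered eigenfunctions) and the finitely many (Lemma \ref{lem:Lax-Phillips}) new exceptional $\Delta$-eigenspaces $W_p$, $p>2$, with $\dim W_p=m(X_q,p)$; all of these are $A_{r'}$-invariant. Writing $b-\pi=u_0+u_{\mathrm{temp}}+\sum_p u_p$ and using orthogonality and $A_{r'}\pi=\pi$,
\[
N_q\nn{A_{r'}(b-\pi)}^2=N_q\nn{A_{r'}u_0}^2+N_q\nn{A_{r'}u_{\mathrm{temp}}}^2+N_q\sum_p\nn{A_{r'}u_p}^2.
\]
For $u_0$: pushing $b$ down to $X_0$ shows $u_0=\tfrac1{N_q}\rho_q^*(b_{\rho_q(x_0^{(q)}),r_0}-\pi_{X_0})$ (because $B_{r_0}(\rho_q(x_0^{(q)}))$ lifts isometrically), and since $A_{r'}$ commutes with $\rho_q^*$ while by Proposition \ref{prop:Lp proposition} applied to the \emph{fixed} $X_0$ the norm of $A_{r'}$ on $L^2_0(X_0)$ is at most $(r'+1)e^{-r'/p_0(X_0)}$ with $p_0(X_0)<\infty$, this term is $\ll_{r_0,X_0}(r'+1)^2e^{-2r'/p_0(X_0)}\to0$, with no density input. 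For $u_{\mathrm{temp}}$: $\nn{A_{r'}u_{\mathrm{temp}}}\le(r'+1)e^{-r'/2}\nn b\ll_{r_0}(r'+1)e^{-r'/2}$, so the term is $\ll_{r_0}(g(R))^2e^{R-g(R)}=o(1)$, since $g(R)-R\ge\delta\ln R$ with $\delta>2$ and $(R+t)^2e^{-t}\le(R+\delta\ln R)^2R^{-\delta}$ for $t\ge\delta\ln R$. For each exceptional $W_p$: $\nn{A_{r'}u_p}\le(r'+1)e^{-r'/p}\nn{u_p}$, and the key claim is $N_q\nn{u_p}^2\ll_{r_0}m(X_q,p)$. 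To see it, let $\{\psi_i\}$ be an orthonormal basis of $W_p$ and $D=\sum_i|\psi_i|^2$ the diagonal of the projector onto $W_p$; Cauchy--Schwarz gives $\nn{u_p}^2=\sum_i|\langle b,\psi_i\rangle|^2\le\mu(B_{r_0})^{-1}\int_{B_{r_0}(x_0^{(q)})}D$, and since $W_p\subset L^2(X_q/X_0)$ is invariant under the deck group $H_q$ of the normal cover, $D$ is $H_q$-invariant and descends to $\kappa_p$ on $X_0$ with $\int_{X_0}\kappa_p=\tfrac1{N_q}\int_{X_q}D=\tfrac1{N_q}\dim W_p=m(X_q,p)/N_q$; as $B_{r_0}(x_0^{(q)})$ maps isometrically onto $B_{r_0}(\rho_q(x_0^{(q)}))$, $\int_{B_{r_0}(x_0^{(q)})}D=\int_{B_{r_0}(\rho_q(x_0^{(q)}))}\kappa_p\le m(X_q,p)/N_q$, proving the claim. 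Hence
\[
N_q\sum_p\nn{A_{r'}u_p}^2\ \ll_{r_0}\ (r'+1)^2\!\!\sum_{p:\,m(X_q,p)\ne0}\!\!m(X_q,p)e^{-2r'/p}\ \ll_{r_0,g}\ g^3(R)\sum_p m(X_q,p)e^{-2g(R)/p}=o(1)
\]
by \eqref{eq:Density requirement0}; under the alternative hypothesis a summation by parts converting $m(X_q,\cdot)$ into $M(X_q,\cdot)$ reduces \eqref{eq:Density requirement}--\eqref{eq:Density requirement2} to the same estimate. Summing the three contributions finishes the proof.

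The step I expect to be the crux is $N_q\nn{u_p}^2\ll_{r_0}m(X_q,p)$: the naive pointwise bound $|\langle b,\psi_i\rangle|\ll_{r_0}1$ only gives $N_q\nn{u_p}^2\ll_{r_0}N_q\,m(X_q,p)$, off by precisely the factor $N_q$, which would render the density hypotheses vacuous. Recovering that factor is exactly where \emph{normality} of the cover enters: it makes the projector diagonal $D$ descend to $X_0$, so that its integral over a small ball around $x_0^{(q)}$ is governed by its \emph{average} $m(X_q,p)/N_q$ rather than by a possibly large peak value, with the injectivity-radius hypothesis ensuring the ball lifts isometrically. Everything else is a faithful transcription of the proof of Theorem \ref{thm1}, together with the elementary observation that $p_0(X_0)<\infty$ makes the base contribution vanish unconditionally.
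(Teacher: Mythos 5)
Your proposal is correct and follows essentially the same route as the paper's proof: the same reduction via the argument of Theorem \ref{thm1} to showing $\nn{A_{r'}\left(b-\pi\right)}^{2}=o\left(N_{q}^{-1}\right)$, the same orthogonal decomposition into the part pulled back from $X_{0}$, the new tempered part, and the new exceptional eigenspaces, and the same use of normality together with the injectivity-radius hypothesis to gain the crucial factor $N_{q}$ in the exceptional projections (your projector-diagonal argument is an equivalent repackaging of Lemma \ref{lem:Covering Lemma 2}, which averages $\nn{P_{W}b_{hx_{0},r_{0}}}^{2}$ over the deck group directly, just as your explicit formula for the projection onto $\rho^{*}L_{0}^{2}\left(X_{0}\right)$ reproves Lemma \ref{lem:Covering Lemma 1}). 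The summation-by-parts step you only sketch for the alternative hypotheses (Inequalities \ref{eq:Density requirement} and \ref{eq:Density requirement2}) is exactly the Abel summation the paper carries out explicitly.
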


Before proving the theorem, let us study its corollaries.
\begin{defn}
\label{def:Density-Definition}We say that a family of covers $\left\{ X_{q}\right\} $
of $X_{0}$ satisfies a \emph{density condition with parameter $A$}
if for every $\epsilon>0$, for each $p>2$, 
\[
M\left(X,p\right)\ll_{\epsilon,\{X_{q}\},X_{0}}CN^{1-A\left(p-2\right)/p+\epsilon},
\]
and furthermore
\begin{itemize}
\item The number of exceptional eigenvalues $\lim_{p\to2,p>2}M\left(X,p\right)=\sum_{p>2}m\left(X,p\right)$
of $X_{q}$ is $\ll_{\{X_{q}\},X_{0}}N$.
\item There exists $p_{\text{max}}$ such that $M\left(X,p_{\text{max}}\right)=0$.
\end{itemize}
\end{defn}

The assumption that the number of exceptional eigenvalues is $O(N)$
is well known to hold in the arithmetic case (see \cite{sarnak1990diophantine}).
There are two main instances of such density results:
\begin{enumerate}
\item The case $A=1$: in this case we may simply write $M\left(X,p\right)\ll_{\epsilon,X_{0}}N^{2/p+\epsilon}$.
This is known to hold for a wide range of cases, including the congruence
subgroups of $SL_{2}\left(\mathbb{Z}\right)$ and all cocompact arithmetic
lattices in $SL_{2}\left(\mathbb{R}\right)$ (See \cite{sarnak1990diophantine,sarnak1991bounds}
for the uniform case and \cite{huntley1993density} for $SL_{2}\left(\mathbb{Z}\right)$).
The corresponding result for LPS graphs are implicitly contained in
\cite[Section 4.4]{davidoff2003elementary}. In this case, for prime
congruence, one may find $p_{\text{max}}$ by using lower bounds on
the dimensions of representations of $SL_{2}\left(F_{q}\right)$ (See
\cite{sarnak1991bounds}). 
\item The case $A>1$: this case requires deeper results in analytic number
theory, and applies to congruence subgroups of $SL_{2}\left(\mathbb{Z}\right)$.
In this case, $p_{\text{max}}$ is essentially bounded by $\frac{2}{1-A^{-1}}$,
and there is no need to restrict to prime congruence. See \cite{iwaniec1985density},
and \cite{humphries2016density} and the references therein for recent
results .
\end{enumerate}
\begin{cor}
\label{cor:Density Theorem Corollary}Let $\rho:X_{q}\to X_{0}$ be
family of normal $N_{q}$-covers, with $N_{q}\to\infty$. Assume the
family satisfies a density condition with parameter $A\ge1$. 

Let $x_{0}^{(q)}\in X_{q}$ be a point such that its projection $\rho_{q}(x_{0}^{(q)})$
to $X_{0}$ has injectivity radius at least $r_{0}$. Then for every
$\epsilon_{0}>0$
\[
\mu\left(x\in X_{q}:d_{X_{q}}\left(x,x_{0}^{(q)}\right)\ge R_{X_{q}}\left(1+\epsilon_{0}\right)\right)/\mu\left(X_{q}\right)=o\left(1\right).
\]
\end{cor}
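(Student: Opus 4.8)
The plan is to deduce Corollary~\ref{cor:Density Theorem Corollary} from Theorem~\ref{thm:Normal Coverings Theorem} by choosing an appropriate function $g$ and verifying its hypotheses using the density condition. Given $\epsilon_0>0$, set $g(R)=(1+\epsilon_0/2)R$ for $R$ large enough (and extended to a non-decreasing function on all of $\R_{+}$); since $R_{X_q}=\ln(N_q)+O_{X_0}(1)$, the conclusion $d_{X_q}(x,x_0^{(q)})\ge g(\ln N_q)$ for all but $o(1)$-fraction of points will give $d_{X_q}(x,x_0^{(q)})\ge R_{X_q}(1+\epsilon_0)$ for all but $o(1)$-fraction, once $q$ is large, because $(1+\epsilon_0/2)\ln N_q \ge (1+\epsilon_0)R_{X_q}$ eventually. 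Condition (1) of the theorem (that $g(R)\ge R+\delta\ln R$ for some $\delta>2$ and $R$ large) is immediate since $g(R)-R=(\epsilon_0/2)R$ dominates any logarithm.

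The substance is checking hypothesis (2), and here I would use the second alternative, namely \eqref{eq:Density requirement} and \eqref{eq:Density requirement2}, since the density condition is phrased in terms of $M(X,p)$. Write $L=\ln(N_q)$, so $g(L)=(1+\epsilon_0/2)L$ and $g^3(L)\asymp L^3$. For \eqref{eq:Density requirement} I would plug in $M(X_q,p)\ll_{\epsilon} N_q^{1-A(p-2)/p+\epsilon}=e^{L(1-A(p-2)/p+\epsilon)}$ and bound the integrand: the exponential factor becomes $e^{L(1-A(p-2)/p+\epsilon)}e^{-2g(L)/p}=e^{L\left(1-A(p-2)/p+\epsilon-(2+\epsilon_0)/p\right)}$. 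The exponent in brackets equals $1-\frac{A(p-2)+2+\epsilon_0}{p}+\epsilon = 1 - A + \frac{2A-2-\epsilon_0}{p}+\epsilon$ — wait, more carefully $\frac{A(p-2)+2+\epsilon_0}{p}=A-\frac{2A-2-\epsilon_0}{p}$, so the exponent is $1-A+\frac{2A-2-\epsilon_0}{p}+\epsilon$. Since $A\ge1$, for $p$ close to $2$ this is close to $1-A+\frac{2A-2-\epsilon_0}{2}+\epsilon = -\epsilon_0/2+\epsilon$, which is negative for $\epsilon<\epsilon_0/2$; and the exponent is decreasing in $p$ when $2A-2-\epsilon_0>0$ and bounded above by its value near $p=2$ otherwise we just note the relevant $p$-range is $[2,p_{\max}]$ since $M(X_q,p)=0$ for $p\ge p_{\max}$. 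So the integrand is $\ll e^{-cL}$ uniformly on $[2,p_{\max}]$ for a fixed $c=c(\epsilon_0,A)>0$, the integral over the bounded range $[2,p_{\max}]$ is $\ll p_{\max} e^{-cL}$, and multiplying by $g^3(L)\asymp L^3$ still gives $o(1)$. For \eqref{eq:Density requirement2}, the number of exceptional eigenvalues is $\ll N_q = e^L$ by the density condition, and $g^2(L)e^{-g(L)}\asymp L^2 e^{-(1+\epsilon_0/2)L}$, so the product is $\ll L^2 e^{-(\epsilon_0/2)L}=o(1)$.

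The main obstacle is bookkeeping rather than conceptual: one must handle the case $A=1$ (where $2A-2-\epsilon_0<0$, so the exponent $1-A+\frac{2A-2-\epsilon_0}{p}+\epsilon=\frac{-\epsilon_0}{p}+\epsilon$ is actually \emph{increasing} in $p$ and is maximized at $p=p_{\max}$, still giving a negative value $-\epsilon_0/p_{\max}+\epsilon<0$ for small $\epsilon$) and the case $A>1$ (exponent decreasing in $p$ past some point, worst at $p=2$) uniformly, always exploiting the finiteness of $p_{\max}$ from the density condition to keep the $p$-integral over a bounded interval and to get a \emph{uniform} exponential decay rate $c>0$. I would also double-check the translation between $g(\ln N_q)$ and $R_{X_q}(1+\epsilon_0)$ absorbs the additive $O_{X_0}(1)$ discrepancy, which it does for $q$ large since the gap $(\epsilon_0/2)\ln N_q\to\infty$. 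With $c=c(\epsilon_0,A)>0$ fixed, all three quantities in \eqref{eq:Density requirement}–\eqref{eq:Density requirement2} are $o(1)$, Theorem~\ref{thm:Normal Coverings Theorem} applies, and the corollary follows.
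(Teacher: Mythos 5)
Your proposal is correct and follows essentially the same route as the paper: apply Theorem \ref{thm:Normal Coverings Theorem} with a linear choice of $g$ and verify conditions \eqref{eq:Density requirement} and \eqref{eq:Density requirement2} from the density bound $M(X_q,p)\ll N_q^{1-A(p-2)/p+\epsilon}$, the $O(N_q)$ bound on the number of exceptional eigenvalues, and the finiteness of $p_{\max}$. The only differences are cosmetic — the paper reduces immediately to $A=1$ and takes $g(R)=(1+\epsilon_0)R$, whereas you treat general $A\ge1$ directly and take slope $1+\epsilon_0/2$, which has the minor advantage of absorbing the $O_{X_0}(1)$ gap between $\ln N_q$ and $R_{X_q}$ explicitly.
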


\begin{proof}
One should verify Inequalities \ref{eq:Density requirement},\ref{eq:Density requirement2}.
We may assume $A=1$.

Let $g(R)=\left(1+\epsilon_{0}\right)R$, then for $\epsilon>0$ small
enough with respect to $\epsilon_{0}$ it holds that 
\begin{align*}
 & g^{3}\left(\ln\left(N_{q}\right)\right)\intop_{2}^{\infty}M\left(X,p\right)e^{-2g\left(\ln\left(N_{q}\right)\right)/p}p^{-2}dp\\
 & \ll_{\epsilon_{0}.\left\{ X_{q}\right\} ,X_{0}}\left(1+e_{0}\right)^{3}\ln^{3}\left(N_{q}\right)\intop_{2}^{p_{\text{max}}}N_{q}^{2/p+\epsilon}e^{-2(1+\epsilon_{0})\ln\left(N_{q}\right)/p}p^{-2}dp\\
 & \ll_{\epsilon_{0}.\left\{ X_{q}\right\} ,X_{0}}\ln^{3}\left(N_{q}\right)\intop_{2}^{p_{\text{max}}}N_{q}^{2/p+\epsilon}N_{q}^{-2(1+\epsilon_{0})/p}p^{-2}dp\\
 & \ll_{X_{0}}\ln^{3}\left(N_{q}\right)\intop_{2}^{p_{\text{max}}}N_{q}^{\epsilon-2\epsilon_{0}/p_{\text{max}}}p^{-2}dp\\
 & \ll\ln^{3}\left(N_{q}\right)N_{q}^{\epsilon-2/p_{\text{max}}\epsilon_{0}}\to_{N_{q}\to\infty}0.
\end{align*}
In addition,
\begin{align*}
 & g^{2}\left(\ln\left(N_{q}\right)\right)\lim_{p\to2,p>2}M\left(X_{q},p\right)e^{-g\left(\ln\left(N_{q}\right)\right)}\\
 & \ll_{\left\{ X_{q}\right\} ,X_{0}}\left(1+\epsilon_{0}\right)^{2}\ln^{2}\left(N_{q}\right)N_{q}^{1-\left(1+\epsilon_{0}\right)}\\
 & \ll_{\epsilon_{0}}\ln^{2}\left(N_{q}\right)N_{q}^{\epsilon_{0}}\to_{N_{q}\to\infty}0.
\end{align*}
\end{proof}
\begin{rem}
\label{rem:covering remark}The density theorems with parameter $A>1$
are not far from proving that there exists $C>0$ such that $\mu\left(x\in X_{q}:d_{X_{q}}\left(x,x_{0}^{(q)}\right)\ge R_{X_{q}}+C\ln R_{X_{q}}\right)/\mu\left(X_{q}\right)=o\left(1\right)$.
The required bound is that for some $\epsilon_{2}>0$,$C_{2}>0$ and
every $2<p<p+\epsilon_{0}$, M$\left(X_{q},p\right)\ll\ln^{C_{2}}\left(N\right)\cdot N^{2/p}$.
\end{rem}

Let us turn to the proof of Theorem \ref{thm:Normal Coverings Theorem}.
It will depend on the following two Lemmas.
\begin{lem}
\label{lem:Covering Lemma 1} Let $\rho:X\to X_{0}$ be an $N$-cover,
$U=\rho^{*}L_{0}^{2}\left(X'\right)\subset L_{0}^{2}\left(X\right)$
be the space of functions pulled back from $X_{0}$ to $X$ and let
$P_{U}$ be the orthogonal projection onto $U$. Let $x_{0}\in X$
be a point such that its projection to $X_{0}$ has injectivity radius
at least $r_{0}$. Then 
\[
\nn{P_{U}\left(b_{x_{0},r_{0}}\right)}=N^{-1/2}\nn{b_{x_{0},r_{0}}}.
\]
\end{lem}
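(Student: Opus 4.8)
The plan is to exploit the fact that the covering map is normal, so that the cover group $H$ of order $N$ acts on $X$ and the subspace $U=\rho^{*}L^{2}_{0}(X_{0})$ is precisely the $H$-invariant part of $L^{2}_{0}(X)$. Concretely, for $f\in L^{2}(X)$ the orthogonal projection $P_{U}$ is the averaging operator $P_{U}f=\frac{1}{N}\sum_{h\in H}h\cdot f$, where $h\cdot f(x)=f(h^{-1}x)$; this is standard, since the $h\cdot f$ span $U$ when $f$ is already $H$-invariant and the average kills the orthogonal complement. So the first step is to write $P_{U}(b_{x_{0},r_{0}})=\frac{1}{N}\sum_{h\in H}b_{h x_{0},r_{0}}$, using that $h\cdot b_{x_{0},r_{0}}=b_{hx_{0},r_{0}}$ because $h$ is an isometry of $X$ (it comes from the deck transformation group) and hence maps the normalized ball indicator at $x_{0}$ to the one at $hx_{0}$.

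The second step is the key geometric observation: the $N$ points $\{hx_{0}:h\in H\}$ are pairwise at distance $>2r_{0}$, so the balls $B_{hx_{0}}(r_{0})$ are pairwise disjoint. Indeed, $\rho(x_{0})$ has injectivity radius at least $r_{0}$ in $X_{0}$, and the points $hx_{0}$ are exactly the fiber $\rho^{-1}(\rho(x_{0}))$; two distinct points of a single fiber of a normal cover are at distance at least twice the injectivity radius of their common image downstairs (a short argument lifting to $\H$: a path between $hx_{0}$ and $h'x_{0}$ of length $<2r_{0}$ would project to a non-contractible loop at $\rho(x_{0})$ of length $<2r_{0}$). Since the supports are disjoint, the $L^{2}$ norms add in quadrature.

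The third step is the computation: $\nn{P_{U}(b_{x_{0},r_{0}})}^{2}=\frac{1}{N^{2}}\sum_{h\in H}\nn{b_{hx_{0},r_{0}}}^{2}=\frac{1}{N^{2}}\cdot N\cdot\nn{b_{x_{0},r_{0}}}^{2}=\frac{1}{N}\nn{b_{x_{0},r_{0}}}^{2}$, where we used that each $b_{hx_{0},r_{0}}$ has the same $L^{2}$ norm as $b_{x_{0},r_{0}}$ (isometry invariance, or simply that $\mu(B_{hx_{0}}(r_{0}))=\mu(B_{r_{0}})$ for all $h$ because these balls embed isometrically as hyperbolic balls). Taking square roots gives the claim.

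The main obstacle — though it is mild — is being careful about the identification $U = L^{2}(X)^{H}$ and the claim that $P_{U}$ is group-averaging: one must note that $\rho^{*}$ identifies $L^{2}(X_{0})$ with the $H$-fixed functions on $X$ (functions constant on $H$-orbits), which is where normality of the cover is really used, and that the pull-back is an isometry up to the factor $N$ (in fact $\rho^*$ rescales norms: $\nn{\rho^* g}_{L^2(X)}^2 = N\nn{g}_{L^2(X_0)}^2$), so one should phrase everything in terms of the $H$-action on $X$ itself rather than passing through $X_{0}$, to avoid normalization slips. The disjointness-of-fibers step should be stated carefully but is genuinely elementary given the injectivity-radius hypothesis.
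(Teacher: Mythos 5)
Your argument is sound when the cover is normal, but the lemma is stated (and proved in the paper) for an arbitrary $N$-cover, and your opening step genuinely needs normality: the identity $P_{U}f=\frac{1}{N}\sum_{h\in H}h\cdot f$ presupposes both that the deck group $H$ has order $N$ and that $U=\rho^{*}L_{0}^{2}\left(X_{0}\right)$ coincides with the $H$-invariant part of $L_{0}^{2}\left(X\right)$. For a non-normal cover the deck group can be much smaller than $N$ (even trivial), and then the $H$-invariant functions strictly contain the pullbacks, so the averaging operator is not $P_{U}$ and the computation collapses at the first line. Since the covers appearing in Theorem \ref{thm:Normal Coverings Theorem} are normal, your proof would suffice for the paper's application, but it does not prove the lemma in the stated generality; note also that your disjoint-fiber-ball mechanism is essentially how the paper proves Lemma \ref{lem:Covering Lemma 2}, which is the statement that really does require normality (and there only an inequality is obtained, consistent with the fact that equality via your averaging formula is special to the invariant subspace).

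The paper's own proof of Lemma \ref{lem:Covering Lemma 1} uses no group action. It computes the norm of the projection by duality, $\left\Vert P_{U}\left(b_{x_{0},r_{0}}\right)\right\Vert _{2}=\max\left\{ \left\langle \rho^{*}u',b_{x_{0},r_{0}}\right\rangle :\left\Vert \rho^{*}u'\right\Vert _{2}=1\right\} $, together with two facts valid for any $N$-cover: $\left\Vert \rho^{*}u'\right\Vert _{2}^{2}=N\left\Vert u'\right\Vert _{2}^{2}$, and $\left\langle \rho^{*}u',b_{x_{0},r_{0}}\right\rangle =\left\langle u',b_{\rho\left(x_{0}\right),r_{0}}\right\rangle $, the latter because the injectivity-radius hypothesis makes the $r_{0}$-ball around $x_{0}$ map isometrically onto the $r_{0}$-ball around $\rho\left(x_{0}\right)$. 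Cauchy--Schwarz then yields $N^{-1/2}\left\Vert b_{\rho\left(x_{0}\right),r_{0}}\right\Vert _{2}=N^{-1/2}\left\Vert b_{x_{0},r_{0}}\right\Vert _{2}$. So either add the normality hypothesis explicitly to your version, or replace your first step by this adjoint/duality argument to recover the full statement.
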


\begin{proof}
We have 
\[
\nn{P_{U}\left(b_{x_{0},r_{0}}\right)}=\max_{u\in U,\nn u=1}\left\langle u,b_{x_{0},r_{0}}\right\rangle =\max_{u'\in L^{2}\left(x'\right),\nn{\rho^{*}u'}=1}\left\langle \rho^{*}u',b_{x_{0},r_{0}}\right\rangle .
\]
 But $\nn{\rho^{*}u'}^{2}=N\nn{u'}^{2}$ and $\left\langle \rho^{*}u',b_{x_{0},r_{0}}\right\rangle =\left\langle u',b_{\rho\left(x_{0}\right),r_{0}}\right\rangle $.
So
\[
\nn{P_{U}\left(b_{x_{0},r_{0}}\right)}=\max_{u'\in L^{2}\left(x'\right),\nn{u'}=N^{-1/2}}\left\langle u',b_{\rho\left(x_{0}\right),r_{0}}\right\rangle =N^{-1/2}\nn{b_{\rho\left(x_{0}\right),r_{0}}}=N^{-1/2}\nn{b_{x_{0},r_{0}}}.
\]
\end{proof}
\begin{lem}
\label{lem:Covering Lemma 2}Let $\rho:X\to X_{0}$ be a normal $N$-cover,
with cover group $H$. Let $W\subset L^{2}\left(X\right)$ be a finite
dimensional $H$-invariant subspace and $P_{W}$ the orthogonal projection
onto this subspace. Let $x_{0}\in X$ be a point such that its projection
to $X_{0}$ has injectivity radius at least $r_{0}$. Then 
\[
\nn{P_{W}\left(b_{x_{0},r_{0}}\right)}\le\sqrt{\frac{\dim W}{N}}\nn{b_{x_{0},r_{0}}}.
\]
\end{lem}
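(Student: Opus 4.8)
The plan is to exploit the action of the cover group $H$ on $X$, comparing $P_W b_{x_0,r_0}$ with its $N$ disjointly supported $H$-translates, in the same spirit as the proof of Lemma \ref{lem:Covering Lemma 1}.

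First I would set up the geometry. Since $\rho:X\to X_0$ is a normal $N$-cover with cover group $H$, the group $H$ has order $N$, acts on $X$ by isometries (hence unitarily on $L^{2}(X)$), and satisfies $\rho(x)=\rho(y)$ iff $x,y$ lie in the same $H$-orbit. Because $\rho(x_0)$ has injectivity radius at least $r_0$, the ball $B_{\rho(x_0)}(r_0)$ in $X_0$ is isometric to the hyperbolic ball $B_{r_0}$, so $\rho^{-1}(B_{\rho(x_0)}(r_0))$ is an honest $N$-sheeted cover of $B_{\rho(x_0)}(r_0)$; the restriction of $\rho$ to $B_{x_0}(r_0)$ is an isometry onto $B_{\rho(x_0)}(r_0)$, and since $H$ acts simply transitively on the fibre over $\rho(x_0)$, the $N$ sheets are exactly $\{h\cdot B_{x_0}(r_0)=B_{hx_0}(r_0):h\in H\}$, which are therefore pairwise disjoint. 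As each $h$ is measure preserving and $hx_0$ again has injectivity radius at least $r_0$, we get $h\cdot b_{x_0,r_0}=b_{hx_0,r_0}$; hence, writing $\beta=\nn{b_{x_0,r_0}}$, the family $\{\beta^{-1}\,h\cdot b_{x_0,r_0}\}_{h\in H}$ is an orthonormal system in $L^{2}(X)$.

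Next, since $W$ is $H$-invariant and each $h\in H$ is unitary, $P_W$ commutes with the $H$-action, so $\nn{P_W(h\cdot b_{x_0,r_0})}=\nn{h\,P_W b_{x_0,r_0}}=\nn{P_W b_{x_0,r_0}}$ for every $h\in H$, and summing squares over $h$ gives $N\nn{P_W b_{x_0,r_0}}^{2}=\sum_{h\in H}\nn{P_W(h\cdot b_{x_0,r_0})}^{2}$. Fixing an orthonormal basis $e_1,\dots,e_m$ of $W$ with $m=\dim W$, so that $\nn{P_W v}^{2}=\sum_{j=1}^{m}\lvert\langle e_j,v\rangle\rvert^{2}$, substituting $v=h\cdot b_{x_0,r_0}$ and interchanging the order of summation yields $N\nn{P_W b_{x_0,r_0}}^{2}=\sum_{j=1}^{m}\sum_{h\in H}\lvert\langle e_j,h\cdot b_{x_0,r_0}\rangle\rvert^{2}$; for each fixed $j$, Bessel's inequality for the orthonormal system $\{\beta^{-1}\,h\cdot b_{x_0,r_0}\}_{h\in H}$ applied to the unit vector $e_j$ bounds $\sum_{h\in H}\lvert\langle e_j,h\cdot b_{x_0,r_0}\rangle\rvert^{2}\le\beta^{2}=\nn{b_{x_0,r_0}}^{2}$, and summing over $j$ gives $N\nn{P_W b_{x_0,r_0}}^{2}\le m\,\nn{b_{x_0,r_0}}^{2}$, which is the claim. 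The only delicate point is the first step, namely that $\rho$ genuinely has $N$ pairwise disjoint sheets over $B_{\rho(x_0)}(r_0)$; this is exactly where the injectivity-radius hypothesis on $\rho(x_0)$ is used (it forces $\rho$ to be an isometry on the $r_0$-ball about $x_0$, so $x_0$ is an honest manifold point and the cover is genuinely $N$-to-one there). Everything after that is formal, parallel to Lemma \ref{lem:Covering Lemma 1}.
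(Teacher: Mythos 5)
Your proof is correct and follows essentially the same route as the paper: translate $b_{x_{0},r_{0}}$ by the deck group, use $H$-invariance of $W$ to equate the norms of the projections of the $N$ translates, and exploit the disjointness of the balls $B_{hx_{0}}\left(r_{0}\right)$ (guaranteed by the injectivity radius of $\rho\left(x_{0}\right)$) to bound the resulting double sum by $\dim W\cdot\nn{b_{x_{0},r_{0}}}^{2}$. Your packaging of the last step as Bessel's inequality for the orthonormal system of normalized translates is just a reformulation of the paper's Cauchy--Schwarz-on-disjoint-supports argument, so the two proofs coincide in substance.
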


\begin{proof}
Let $u_{1},...,u_{\dim W}$ be an orthonormal basis of $W$. Then
\[
\left\Vert P_{W}\left(b_{x_{0},r_{0}}\right)\right\Vert _{2}^{2}=\sum_{i=1}^{\dim W}\left|\left\langle u_{i},b_{x_{0},r_{0}}\right\rangle \right|^{2}.
\]

On the other hand, the points $hx_{0}$, where $h\in H$, are all
distinct, the balls $B_{r}\left(hx_{0}\right)$ of radius $r_{0}$
around them are disjoint, and since $W$ is $H$-invariant for each
$h\in H$ 
\[
\left\Vert P_{W}\left(b_{hx_{0},r_{0}}\right)\right\Vert _{2}^{2}=\left\Vert P_{W}\left(b_{x_{0},r_{0}}\right)\right\Vert _{2}^{2}.
\]
so
\begin{align*}
N\left\Vert P_{W}\left(b_{x_{0},r_{0}}\right)\right\Vert _{2}^{2} & =\sum_{h\in H}\sum_{i=1}^{\dim W}\left|\left\langle u_{i},b_{hx_{0},r_{0}}\right\rangle \right|^{2}\\
 & \le\sum_{i=1}^{\dim W}\sum_{h\in H}\left\Vert u_{i}|_{B_{r}\left(hx_{0}\right)}\right\Vert _{2}^{2}\left\Vert b_{x_{0},r_{0}}\right\Vert _{2}^{2}\\
 & =\left\Vert b_{x_{0},r_{0}}\right\Vert _{2}^{2}\sum_{i=1}^{\dim W}\sum_{h\in H}\left\Vert u_{i}|_{B_{r}\left(hx_{0}\right)}\right\Vert _{2}^{2}\\
 & \le\left\Vert b_{x_{0},r_{0}}\right\Vert _{2}^{2}\sum_{i=1}^{\dim W}\left\Vert u_{i}\right\Vert _{2}^{2}=\dim W\left\Vert b_{x_{0},r_{0}}\right\Vert _{2}^{2}.
\end{align*}
\end{proof}
\begin{proof}
of Theorem \ref{thm:Normal Coverings Theorem}. To avoid cumbersome
notations we do not use the index $q$ in the proof below.

By the proof of Theorem \ref{thm1} one should prove the following
inequality for $r=g\left(R_{X}\right)$, 
\[
\nn{A_{r}\left(b_{x_{0},r_{0}}-\pi\right)}^{2}=o\left(N^{-1}\right).
\]

Let $\left\{ p_{i}\right\} _{i=1}^{T}$ be the set of $p$-values
(without multiplicities) of exceptional eigenvalues of $L^{2}\left(X/X_{0}\right)$,
i.e., the $p$ such that the corresponding matrix coefficient is not
in $L^{p'}$ for every $p'<p$ but are in $L^{p'}$ for every $p'>p$.
Let $V_{i}$ be the vector space of eigenvectors with $p$-value $p_{i}$.
Let $p_{0}=2$ and $V_{0}$ the orthogonal complement of the $V_{i}$
in $L^{2}\left(X/X_{0}\right)$. Then for $i=0,...,T$, the norm of
$A_{r}$ on $V_{i}$ is bounded by $\left(r+1\right)e^{-r/p_{i}}$.

We have the decomposition
\[
L^{2}\left(X\right)=\text{span}\left\{ \pi\right\} \oplus\rho^{*}L_{0}^{2}\left(X_{0}\right)\oplus V_{0}\oplus V_{1}\oplus...\oplus V_{T}.
\]

Decompose $b_{x_{0},r_{0}}=\pi+u+v_{0}+...+v_{T}$. For $i=1,...,T$,
denote $m\left(X,p_{i}\right)=\dim sV_{i}$. We have

\begin{align*}
\nn u^{2} & =N^{-1}\nn{b_{x_{0},r_{0}}}\ll_{r_{0}}N^{-1}\\
\nn{v_{0}}^{2} & \le\nn{b_{x_{0},r_{0}}}^{2}\ll_{r_{0}}1\\
\nn{v_{i}}^{2} & \le N^{-1}m\left(X,p_{i}\right)\nn{b_{x_{0},r_{0}}}\ll_{r_{0}}N^{-1}m\left(X.p_{i}\right).
\end{align*}
The first equality follow from Lemma \ref{lem:Covering Lemma 1},
the second inequality is straightforward, and the third inequality
follows from Lemma \ref{lem:Covering Lemma 2}. 

Then for $r=g\left(R_{X_{q}}\right)$,
\begin{equation}
\nn{A_{r}\left(b_{x_{0},r_{0}}-\pi\right)}^{2}=\nn{A_{r}u}^{2}+\nn{A_{r}v_{0}}^{2}+\sum_{i=1}^{T}\nn{A_{r}v_{0}}^{2}.\label{eq:normal covering proof}
\end{equation}

Therefore one should prove that the RHS of Equation \ref{eq:normal covering proof}
is $O\left(N^{-1}\right)$.

Since $\nn u^{2}\ll_{r_{0}}N^{-1}$ and $X_{0}$ has some $p_{0}\left(X_{0}\right)$
and the first summand of Equation \ref{eq:normal covering proof}
is $o\left(N^{-1}\right)$. 

Since $\nn{v_{0}}^{2}\ll_{r_{0}}1$ and for some $\delta>2$, and
$R$ large enough $g(R)\ge R+\delta\ln R$, the second summand Equation
\ref{eq:normal covering proof} is $o\left(N^{-1}\right)$.

For the third summand, we have 
\[
\sum_{i=1}^{T}\nn{A_{r}v_{0}}^{2}\le N^{-1}\left(r+1\right)^{2}\sum_{i=1}^{T}e^{-2r/p_{i}}m\left(X,p_{i}\right)m.
\]
This proves that if Inequality \ref{eq:Density requirement0} holds
then the third summand of Equation \ref{eq:normal covering proof}
is $o(N^{-1})$.

Notice that for $1\le i\le T$, $m\left(X,p_{i}\right)=M\left(X,p_{i}\right)-M\left(X,p_{i+1}\right)$,
with $M\left(X,p_{T+1}\right)=0$. Then 
\begin{align*}
\sum_{i=1}^{T}\nn{A_{r}v_{0}}^{2} & \le N^{-1}\left(r+1\right)^{2}\sum_{i=1}^{T}e^{-2r/p_{i}}m\left(X,p_{i}\right)\\
 & =N^{-1}\left(r+1\right)^{2}\sum_{i=1}^{T}e^{-2r/p_{i}}\left(M\left(X,p_{i}\right)-M\left(X,p_{i+1}\right)\right)\\
 & =N^{-1}\left(r+1\right)^{2}\left(M\left(X,p_{1}\right)e^{-2r/p_{1}}+\sum_{i=2}^{T}M\left(X,p_{i}\right)\left(e^{-2r/p_{i}}-e^{-2r/p_{i-1}}\right)\right)\\
 & \le N^{-1}\left(r+1\right)^{2}\left(M\left(X,p_{1}\right)e^{-2r/p_{1}}+\sum_{i=1}^{T}M\left(X,p_{i}\right)2r\left(p_{i}-p_{i-1}\right)e^{-2r/p_{i}}p_{i-1}^{-2}\right),
\end{align*}
Where we used $\left(e^{-2r/p_{i}}-e^{-2r/p_{i-1}}\right)=2r\left(p_{i}-p_{i-1}\right)e^{-2r/p'}p^{\prime-2}$,
for some $p_{i-1}\le p^{\prime}\le p_{i}$. 

By adding arbitrary $p_{i}$-s with $m\left(X,p_{i}\right)=0$ we
may conclude 
\begin{align*}
\sum_{i=1}^{T}\nn{A_{r}v_{0}}^{2} & \le N^{-1}\left(r+1\right)^{2}\left(\lim_{p_{i}\to2,p_{i}>2}M\left(X,p_{i}\right)e^{-r}+2r\intop_{2}^{\infty}M\left(X,p\right)e^{-2r/p}p^{-2}dp\right).
\end{align*}
This proves that if Inequalities \ref{eq:Density requirement} and
\ref{eq:Density requirement2} hold then the third summand in Equation
\ref{eq:normal covering proof} is $o\left(N^{-1}\right)$.
\end{proof}

\section{\label{sec:Concentration of distance}Appendix I: Isoperimetric Inequalities
and Concentration of Distance from a Fixed Vertex}

The bounds we have allows us to prove the following isoperimetric
inequality. Similar bounds are well known (see \cite[Theorem 4.1]{gromov1983topological}).
\begin{lem}
Let $X=\Gamma\backslash\H$ be a quotient, and $p=p_{0}\left(X\right)$
as defined in Proposition \ref{prop:Lp proposition}. For $r\ge0$,
denote $\kappa_{r,p}=\left(r+1\right)^{2}e^{-2r/p}$. For a closed
set $Y\subset X$, let 
\begin{align*}
Y_{r} & =\left\{ x\in X\mid d\left(x,Y\right)\le r\right\} ,
\end{align*}
and denote $c=\mu\left(Y\right)/\mu\left(X\right)$ and $c'=\mu\left(Y_{r}\right)/\mu\left(X\right)$.
Then
\[
c'\ge\frac{c}{\left(\kappa_{r,p}(1-c)+c\right)}\text{, and hence also }c\le\frac{\kappa_{r,p}c'}{1-c'+\kappa_{r,p}c'}.
\]
\end{lem}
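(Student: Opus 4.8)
The plan is to run the same averaging-operator argument that powered Theorem~\ref{thm1}, but now using the $L^p$-bound of Proposition~\ref{prop:Lp proposition} instead of the Ramanujan bound, and reading it as an isoperimetric statement rather than a measure estimate. First I would let $b_Y$ denote the function that is $1/\mu(Y)$ on $Y$ and $0$ off $Y$, so that $b_Y$ is a probability density, and decompose $b_Y = \pi + (b_Y-\pi)$ with $b_Y-\pi \perp \pi$ in $L^2(X)$; then $\nn{b_Y-\pi}^2 = \nn{b_Y}^2 - \nn{\pi}^2 = \mu(Y)^{-1} - \mu(X)^{-1}$. Applying $A_r$ and using that its norm on $L_0^2(X)$ is $\le (r+1)e^{-r/p} = \kappa_{r,p}^{1/2}$, I get $\nn{A_r b_Y - \pi}^2 \le \kappa_{r,p}\left(\mu(Y)^{-1}-\mu(X)^{-1}\right)$.

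The second, geometric step is to observe that $A_r b_Y$ is supported inside $Y_r$: if $d(x,Y) > r$ then the entire sphere of radius $r$ about $x$ misses $Y$, so $(A_r b_Y)(x)=0$. (Strictly, $A_r$ averages over the $r$-sphere, and a point of $Y$ lies within distance $r$ of $x$ iff $x$ lies on some such sphere through a point of $Y$; the cleanest phrasing is that the support of $A_r\delta_{x_0}$ is the $r$-sphere about $x_0$, hence $\mathrm{supp}(A_r b_Y)\subseteq Y_r$ since $Y$ is closed.) Therefore on the complement $X\setminus Y_r$, which has measure $(1-c')\mu(X)$, the function $A_r b_Y - \pi$ equals $-\pi$, whose absolute value is $\mu(X)^{-1}$. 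This yields the lower bound
\[
\nn{A_r b_Y - \pi}^2 \ge \int_{X\setminus Y_r}\mu(X)^{-2}\,d\mu = (1-c')\mu(X)^{-1}.
\]

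Combining the two inequalities gives $(1-c')\mu(X)^{-1} \le \kappa_{r,p}\left(\mu(Y)^{-1}-\mu(X)^{-1}\right)$. Writing $\mu(Y)=c\,\mu(X)$ and clearing denominators, this is $1-c' \le \kappa_{r,p}(c^{-1}-1) = \kappa_{r,p}(1-c)/c$, i.e. $c(1-c') \le \kappa_{r,p}(1-c)$, which rearranges to $c \le \kappa_{r,p}/\left(\kappa_{r,p} + 1 - c'\right)$; a second rearrangement (dividing numerator and denominator appropriately, or just solving the same inequality for the bound in terms of $c$) gives both displayed forms $c' \ge c/(\kappa_{r,p}(1-c)+c)$ and $c \le \kappa_{r,p}c'/(1-c'+\kappa_{r,p}c')$. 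I would present the algebra once and note the two forms are equivalent.

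The one step that genuinely requires care — the "main obstacle" — is the support claim $\mathrm{supp}(A_r b_Y)\subseteq Y_r$, and in particular making sure the definition of $A_r$ as averaging over the sphere of radius $r$ (not the ball) is used correctly together with closedness of $Y$; everything else is the Cauchy–Schwarz/orthogonality bookkeeping already rehearsed in the proof of Theorem~\ref{thm1}. I would also remark that the argument does not need $x_0$ to have a lower bound on injectivity radius — unlike Theorem~\ref{thm1}, here $b_Y$ is defined directly on $X$ — and that taking $Y$ a single point and $p=2$ recovers an estimate close to the second bound of Theorem~\ref{thm1} up to the $\mu(B_{r_0})$ normalization, which is the point of the remark following the lemma.
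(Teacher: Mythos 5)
Your setup (the density $b_Y$, the orthogonal decomposition $b_Y=\pi+(b_Y-\pi)$, the spectral bound $\nn{A_r(b_Y-\pi)}\le\kappa_{r,p}^{1/2}\nn{b_Y-\pi}$, and the support observation $\mathrm{supp}(A_rb_Y)\subseteq Y_r$) is exactly the paper's, but the final quantitative step has a genuine gap. Your lower bound only uses the contribution of $X\setminus Y_r$, where $A_rb_Y-\pi=-\pi$, giving $\nn{A_rb_Y-\pi}^2\ge(1-c')/\mu(X)$. Combined with the upper bound $\kappa_{r,p}(1-c)/(c\,\mu(X))$ this yields only
\[
c\,(1-c')\le\kappa_{r,p}(1-c),
\]
whereas the lemma asserts the strictly stronger inequality $c\,(1-c')\le\kappa_{r,p}(1-c)\,c'$ (this is what $c'\ge c/(\kappa_{r,p}(1-c)+c)$ rearranges to). Your claim that ``a second rearrangement'' produces the two displayed forms is false: from $c(1-c')\le\kappa_{r,p}(1-c)$ one only gets $c'\ge 1-\kappa_{r,p}(1-c)/c$ and $c\le\kappa_{r,p}/(1-c'+\kappa_{r,p})$, both weaker than the stated bounds. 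Concretely, with $\kappa_{r,p}=0.1$ and $c=0.2$ your inequality permits $c'=0.6$, while the lemma demands $c'\ge 5/7$.

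The missing ingredient is that the unit of mass carried by $A_rb_Y$ is confined to $Y_r$, which by Cauchy--Schwarz forces $\nn{A_rb_Y}^2\ge 1/\mu(Y_r)$; this is how the paper argues, pairing it with $\nn{A_rb_Y}^2\le(\kappa_{r,p}(1-c)+c)/\mu(Y)$ to get the claim directly. If you prefer to stay with $A_rb_Y-\pi$, you can repair your argument by also applying Cauchy--Schwarz on $Y_r$: since $\int_{Y_r}(A_rb_Y-\pi)\,d\mu=1-c'$, one has $\int_{Y_r}\left|A_rb_Y-\pi\right|^2d\mu\ge(1-c')^2/\mu(Y_r)$, and adding the outside contribution gives $\nn{A_rb_Y-\pi}^2\ge(1-c')/(c'\mu(X))$; combining with your upper bound then yields $\frac{1-c'}{c'}\le\kappa_{r,p}\frac{1-c}{c}$, which is exactly the lemma. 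Your closing remarks (no injectivity-radius hypothesis needed; sphere versus ball and closedness of $Y$ for the support claim) are correct but are not where the difficulty lies.
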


\begin{rem}
For $ck_{r,p}^{-1}$ small $c'\gg\frac{e^{2r/p}}{\left(r+1\right)^{2}}c$.
So for $p=2$, up to an $\left(r+1\right)^{-2}$ factor, the growth
of small sets is the best possible, i.e. the size of the radius $r$-ball. 
\end{rem}

\begin{rem}
The result of \cite[Theorem 4.1]{gromov1983topological}, which is
more general and works for all surfaces, not necessarily hyperbolic,
essentially replaces the exponent $2/p=1-\sqrt{1-4\lambda}$ by $\sqrt{\lambda}$,
so the results above are asymptotically better for the relevant domain
$0\le\lambda\le1/4$.
\end{rem}

\begin{proof}
We may assume $\mu\left(Y\right)>0$. Let $b_{Y}\in L^{1}\left(Y\right)$
be defined by 
\[
b_{Y}=\begin{cases}
\mu^{-1}\left(x\right) & x\in Y\\
0 & x\notin Y
\end{cases}.
\]
Then $\left\Vert b_{Y}\right\Vert _{1}=1$, $\left\Vert b_{Y}\right\Vert _{2}^{2}=\mu^{-1}\left(Y\right)$,
$\n{A_{r_{0}}b_{Y}}=1$ and $\text{supp}\left(A_{r}b_{Y}\right)\subset Y_{r}$,
so $\nn{A_{r}b_{Y}}^{-2}\ge\frac{1}{\mu\left(Y_{r}\right)}$, i.e.
\[
\mu\left(Y_{r}\right)\ge\nn{A_{r}b_{Y}}^{2}.
\]
Decompose $b_{Y}=\pi+b$, with 
\[
\nn b^{2}=\nn{b_{Y}}^{2}-\nn{\pi}^{2}=\frac{1}{\mu\left(Y\right)}-\frac{1}{\mu\left(X\right)}=\frac{1-c}{\mu\left(Y\right)}.
\]
We have 

\begin{align*}
\nn{A_{r_{0}}b_{Y}}^{2} & =\nn{A_{r}b}^{2}+\nn{A_{r}\pi}^{2}\\
 & \le\left(r+1\right)^{2}e^{-2r/p}\nn b^{2}+\nn{\pi}^{2}\\
 & \le\left(r+1\right)^{2}e^{-2r/p}(1-c)\mu^{-1}\left(Y\right)+\mu^{-1}\left(X\right)\\
 & =\left(\kappa_{r,p}(1-c)+c\right)\mu^{-1}\left(Y\right).
\end{align*}
Combining the two inequalities we get 
\[
\mu\left(Y_{r}\right)\ge\nn{A_{r_{0}}b_{Y}}^{-2}\ge\frac{c}{\left(\kappa_{r,p}(1-c)+c\right)}\mu\left(X\right).
\]
The other inequality in the theorem follows from the first one.
\end{proof}
We may now state the following concentration of distance theorem:
\begin{thm}
\label{thm:concentration of Distance}There exists $a=a\left(p_{0}\left(X\right)\right)>0$
such that for each $x_{0}\in X$ there exists $R_{X,x_{0}}$ such
that for every $\gamma>0$:
\[
\mu\left(x\in X\mid\left|d_{X}\left(x,x_{0}\right)-R_{X,x_{0}}\right|\ge\gamma\right)/\mu\left(X\right)\ll_{p_{0}\left(X\right)}a^{-\gamma}.
\]

By Theorem \ref{thm1} if $X$ is Ramanujan and $x_{0}$ has injectivity
radius $r_{0}$ then $R_{X,x_{0}}$ satisfies $R_{X}\le R_{X,x_{0}}\le R_{X}+\left(2+\epsilon\right)\ln R_{X}$).
\end{thm}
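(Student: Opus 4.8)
The plan is to deduce the theorem from the isoperimetric inequality above by a single (non-iterated) application, using that $\kappa_{r,p}=(r+1)^{2}e^{-2r/p}$ already decays exponentially in $r$. Put $p=p_{0}(X)$, which is finite by Lemma \ref{lem:Lax-Phillips}. First I would record the following reformulation of the bound $c\le\kappa_{r,p}c'/(1-c'+\kappa_{r,p}c')$: \emph{if $Y\subset X$ is closed and $\mu(Y_{r})/\mu(X)\le 1/2$, then $\mu(Y)/\mu(X)\le\kappa_{r,p}$} (indeed $1-c'+\kappa_{r,p}c'\ge 1-c'\ge 1/2$ and $c'\le 1/2$). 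Next I would fix the center: let $R_{X,x_{0}}$ be a median of the continuous function $x\mapsto d_{X}(x,x_{0})$, say $R_{X,x_{0}}=\inf\{t:\mu(d_{X}(\cdot,x_{0})\le t)\ge\mu(X)/2\}$, so that monotonicity and right continuity of the distribution function give both $\mu(d_{X}(\cdot,x_{0})<R_{X,x_{0}})\le\mu(X)/2$ and $\mu(d_{X}(\cdot,x_{0})>R_{X,x_{0}})\le\mu(X)/2$.

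Now fix $\gamma>0$. For the lower tail I would take $Y=\{x:d_{X}(x,x_{0})\le R_{X,x_{0}}-\gamma\}$ (closed) and apply the reformulation with $r=\gamma/2$: by the triangle inequality $Y_{\gamma/2}\subseteq\{x:d_{X}(x,x_{0})\le R_{X,x_{0}}-\gamma/2\}\subseteq\{x:d_{X}(x,x_{0})<R_{X,x_{0}}\}$, whose relative measure is $\le 1/2$, so $\mu(Y)/\mu(X)\le\kappa_{\gamma/2,p}$. Symmetrically, for $Y=\{x:d_{X}(x,x_{0})\ge R_{X,x_{0}}+\gamma\}$ one has $Y_{\gamma/2}\subseteq\{x:d_{X}(x,x_{0})>R_{X,x_{0}}\}$, hence again $\mu(Y)/\mu(X)\le\kappa_{\gamma/2,p}$. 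Adding the two estimates,
\[
\mu\left(x\in X:\left|d_{X}(x,x_{0})-R_{X,x_{0}}\right|\ge\gamma\right)/\mu(X)\le 2\kappa_{\gamma/2,p}=2(\gamma/2+1)^{2}e^{-\gamma/p},
\]
and since $C(p):=\sup_{s\ge 0}(s/2+1)^{2}e^{-s/(2p)}<\infty$, the right-hand side is at most $2C(p)\,a^{-\gamma}$ with $a=a(p)=e^{1/(2p)}>1$, which is the asserted bound with implied constant depending only on $p_{0}(X)$.

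Finally, for the concluding remark (under its hypothesis that $X$ is Ramanujan and $x_{0}$ has injectivity radius at least $r_{0}$): Theorem \ref{thm1} gives, for $R_{X}$ large, $\mu(d_{X}(\cdot,x_{0})\ge R_{X}+(2+\epsilon)\ln R_{X})/\mu(X)<1/2$, while $\mu(\{x:d_{X}(x,x_{0})\le r\})\le 2\pi(\cosh r-1)$ (that set being a quotient of a hyperbolic $r$-ball) together with $\mu(X)=2\pi(\cosh R_{X}-1)$ gives $\mu(d_{X}(\cdot,x_{0})\le R_{X}-\ln 2)/\mu(X)<1/2$; hence the median lies in $(R_{X}-\ln 2,\ R_{X}+(2+\epsilon)\ln R_{X}]$, and since any point within $O(1)$ of a valid concentration center is again one (after enlarging the constant), one may replace the median by $\max(\mathrm{median},R_{X})\in[R_{X},R_{X}+(2+\epsilon)\ln R_{X}]$. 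I do not anticipate a genuine obstacle here: all the substance sits in the isoperimetric inequality, and the only care needed is in choosing the center as a median — so that each tail set $Y$ has $\mu(Y_{\gamma/2})\le\mu(X)/2$ — and in the strict versus non-strict inequalities handling possible atoms of $d_{X}(\cdot,x_{0})$; the exponential decay of $\kappa_{r,p}$ is exactly what makes a single application of the inequality enough, so no iteration is required.
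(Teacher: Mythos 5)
Your proposal is correct and follows essentially the same route as the paper: take $R_{X,x_{0}}$ to be a median of $d_{X}(\cdot,x_{0})$ and apply the isoperimetric lemma (with its $\kappa_{r,p}=(r+1)^{2}e^{-2r/p}$ decay) once to each tail, then absorb the polynomial factor into the exponential to get $a=a(p_{0}(X))$. The only differences are cosmetic — you thicken each tail set by $\gamma/2$ and use one direction of the lemma instead of thickening the ball $Y(R_{X,x_{0}})$ by $\gamma$ and using both directions, and you justify the final remark by allowing an $O(1)$ shift of the median, which is a slightly more careful treatment of the claim $R_{X}\le R_{X,x_{0}}$ than the paper gives.
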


\begin{proof}
For $r\ge0$ denote 
\[
Y(r)=\left\{ x\in X\mid d\left(x,x_{0}\right)\le r\right\} .
\]
Choose $R_{X,x_{0}}$ to be such that 
\[
\mu\left(Y\left(R_{X,x_{0}}\right)\right)=\frac{1}{2}\mu\left(X\right).
\]
Let $Y=Y(R_{X,x_{0}}-\gamma)$. Then $Y_{\gamma}=Y\left(R_{X,x_{0}}\right)$
and

\begin{align*}
\mu\left(Y\right)\mu\left(X\right)^{-1} & \le\frac{k_{\gamma,p}\frac{1}{2}}{1-\frac{1}{2}+k_{\gamma,p}\frac{1}{2}}=\frac{k_{\gamma,p}}{1+k_{\gamma,p}}\le k_{\gamma,p}.
\end{align*}
Let $Z=Y(R_{X,x_{0}}+\gamma)$. Then $Y\left(R_{X,x_{0}}\right)_{\gamma}=Z$
and
\begin{align*}
\mu\left(Z\right)\mu\left(X\right)^{-1} & \ge\frac{\frac{1}{2}}{\left(\kappa_{r,p}(1-\frac{1}{2})+\frac{1}{2}\right)}=\frac{1}{1+k_{\gamma,p}}.
\end{align*}
Hence
\[
1-\mu\left(Z\right)\mu\left(X\right)^{-1}\le\frac{k_{\gamma,p}}{1+k_{\gamma,p}}\le k_{\gamma,p}
\]
And finally,
\begin{align*}
\mu\left(x\in X:\left|d_{X}\left(x,x_{0}\right)-R_{X,x_{0}}\right|\le\gamma\right)/\mu\left(X\right) & =1-\mu\left(Z\right)\mu\left(X\right)^{-1}-\mu\left(Y\right)\mu\left(X\right)^{-1}\\
 & \le2k_{\gamma,p}.
\end{align*}
We finish by noting that there exists $a=a\left(p\right)$ such that
\[
k_{\gamma,p}\ll_{p}a^{-\gamma}.
\]
\end{proof}

\section{\label{sec:Cutoof and Comparison-with-the-flat}Appendix II: Comparison
with the Flat case}

In \cite[Section 3C]{diaconis1988group}, Diaconis analyses the random
walk on the Cayley graph of $\Z/N\Z$ with respect to the generators
$\pm1$, and shows that it does not have a cutoff. Namely, he shows
that the time $t_{0}^{T}$ until the random walk satisfies $\n{p_{N}^{T}-\pi}\le e^{-T}$
is $\Theta\left(N^{2}T\right)$.

We will similarly analyze the Brownian random walk on the torus $a\Z\backslash\R$
where $a>0$, and show it does not have a cutoff as $a\to\infty$.
Namely, we will show that time until the time $t_{0}^{T}$ until the
random walk satisfies $\n{p_{a}^{T}-\pi}\le e^{-T}$ is $\Theta\left(a^{2}T\right)$.
Similar analysis shows that the Brownian random walk on quotients
of $\R^{n}$ by $a\mathbb{Z}^{n}$ does not express a cutoff as $a\to\infty$. 

It is also worth mentioning that the ``distance $r_{1}$'' discrete
random walk on $a\Z\backslash\R$ does not even converge in $L^{1}$
to the uniform probability, since it remains discrete. For higher
dimensions the ``distance $r_{1}$'' random walk does converge to
the uniform probability (for reasons similar to Section \ref{sec:DeltaToL2}),
but does not express a cutoff by the central limit theorem and comparison
with the Brownian motion.

Let $X_{a}=a\mathbb{Z}\backslash\R$ and let $x_{0}\in X$. The distribution
of the Brownian random walk starting at $x_{0}$ at time $t$ for
$x\in X$ is $p_{t}(x,x_{0})=\left(\delta_{x_{0}}*f_{t}\right)(x)=\sum_{n\in\Z}f_{t}(x-x_{0})$,
with $f_{t}(x)=\frac{1}{\sqrt{2\pi t}}\exp\left(-x^{2}/2t\right)$. 

By normalizing and choosing $\lambda=a^{2}$, we may consider a fixed
space $X=\mathbb{Z}\backslash\mathbb{R}$, a fixed point $x_{0}=\mathbb{Z}0$
and let $f_{t}^{\lambda}(x)=\frac{1}{\sqrt{2\pi\lambda^{-1}t}}\exp\left(-x^{2}\lambda/2t\right)$.
Then $p_{t}^{\lambda}(x)=\sum_{n\in\mathbb{Z}}f_{t}^{\epsilon}(x+n)$.
\begin{prop}
We have for every $\lambda>0$, $t\ge0$.
\[
\exp\left(-\lambda^{-1}t\right)\le\n{p_{t}^{\lambda}-\pi}\le\sqrt{\frac{2}{1-\exp\left(-2\lambda^{-1}t\right)}}\cdot\exp\left(-\lambda^{-1}t\right).
\]
\end{prop}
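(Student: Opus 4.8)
The statement concerns the heat kernel $p_t^\lambda$ on the circle $\mathbb{Z}\backslash\mathbb{R}$, and the natural tool is the Fourier expansion on the circle rather than the Gaussian sum form. The plan is to pass to the spectral side via Poisson summation: $p_t^\lambda(x) = \sum_{n\in\mathbb{Z}} f_t^\lambda(x+n)$ equals $\sum_{k\in\mathbb{Z}} \widehat{f_t^\lambda}(k) e^{2\pi i k x}$, and since $f_t^\lambda$ is a Gaussian of variance $\lambda^{-1}t$, its Fourier transform is $\widehat{f_t^\lambda}(\xi) = \exp(-2\pi^2 \xi^2 \lambda^{-1} t)$. Thus
\[
p_t^\lambda(x) - \pi = \sum_{k\ne 0} e^{-2\pi^2 k^2 \lambda^{-1} t} e^{2\pi i k x},
\]
where $\pi \equiv 1$ is the $k=0$ term. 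The quantity to estimate is the $L^1$-norm $\|p_t^\lambda - \pi\|_1 = \int_0^1 |p_t^\lambda(x) - \pi|\,dx$.

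First I would establish the upper bound. By Cauchy--Schwarz on the unit-length circle, $\|p_t^\lambda - \pi\|_1 \le \|p_t^\lambda - \pi\|_2$, and by Parseval the $L^2$-norm squared is $\sum_{k\ne 0} e^{-4\pi^2 k^2 \lambda^{-1} t}$. Setting $u = e^{-4\pi^2 \lambda^{-1} t}$, this sum is $2\sum_{k\ge 1} u^{k^2} \le 2\sum_{k\ge 1} u^k = \frac{2u}{1-u} \le \frac{2u}{1-u}$; more precisely one uses $u^{k^2} \le u^k$ for $k\ge 1$ and $u<1$, giving $\sum_{k\ne0} u^{k^2} \le \frac{2u}{1-u^2}\cdot$(something) — the cleanest route is $2\sum_{k\ge1}u^{k^2}\le 2u\sum_{j\ge0}u^{2j} = \frac{2u}{1-u^2}$ using $k^2 \ge 2k-1$. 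Taking square roots gives $\|p_t^\lambda-\pi\|_1 \le \sqrt{\tfrac{2u}{1-u^2}}$. Now a subtlety: the claimed bound has $\exp(-\lambda^{-1}t)$, not $\exp(-4\pi^2\lambda^{-1}t)$, so the exponential rate in the paper's statement is weaker than what Fourier analysis gives directly. Hence I would instead work with a slower decay: bound $u^{k^2}$ crudely or simply note $e^{-4\pi^2 k^2\lambda^{-1}t} \le e^{-\lambda^{-1}t}$ is false for large $t$... so actually one must check the normalization. Re-examining: with $f_t^\lambda(x) = \frac{1}{\sqrt{2\pi\lambda^{-1}t}}e^{-x^2\lambda/2t}$, variance is $\sigma^2 = \lambda^{-1}t$, Fourier transform $e^{-2\pi^2\xi^2\sigma^2}$, so the decay constant is $2\pi^2 \gg 1$. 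Thus the paper's bound with bare $\exp(-\lambda^{-1}t)$ is in fact \emph{implied} by the sharper $\exp(-2\pi^2\lambda^{-1}t)$ for the $k=\pm1$ terms — so I would set $v = e^{-2\pi^2\lambda^{-1}t} \le e^{-\lambda^{-1}t}$ wherever an upper bound is needed, and $v \ge \cdots$ is not what we want; care with directions of inequality is the main bookkeeping hazard.

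For the lower bound, the idea is that the dominant Fourier mode $k=\pm1$ controls the $L^1$-norm from below. Write $p_t^\lambda(x) - \pi = 2v\cos(2\pi x) + E(x)$ where $v = e^{-2\pi^2\lambda^{-1}t}$ and $E(x) = \sum_{|k|\ge 2} e^{-2\pi^2 k^2\lambda^{-1}t}e^{2\pi ikx}$ is the tail. By the triangle inequality in $L^1$, $\|p_t^\lambda-\pi\|_1 \ge \|2v\cos(2\pi x)\|_1 - \|E\|_1 = \frac{4v}{\pi}\cdot\frac{?}{} - \|E\|_1$; here $\int_0^1 2v|\cos 2\pi x|\,dx = 2v\cdot\frac{2}{\pi} = \frac{4v}{\pi}$. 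The tail satisfies $\|E\|_1 \le \|E\|_2 = (\sum_{|k|\ge2}v^{k^2/1}\cdots)^{1/2}$ which is $O(v^2)$, exponentially smaller than $v$. So for $t$ not too small we'd get $\|p_t^\lambda - \pi\|_1 \ge c\cdot v$. But the claimed lower bound is $e^{-\lambda^{-1}t} \ge$? — and $v = e^{-2\pi^2\lambda^{-1}t}$ is \emph{smaller} than $e^{-\lambda^{-1}t}$, so this approach gives a lower bound that's too weak. This signals that either the normalization of $f_t$ in the paper differs (perhaps $f_t(x) = \frac{1}{\sqrt{2\pi t}}e^{-x^2/2t}$ with the Laplacian normalized so that the generator is $\tfrac12\frac{d^2}{dx^2}$, matching $B_t = e^{-\Delta t}$ only up to a constant), or the circle has circumference making the bottom eigenvalue exactly $\lambda^{-1}$. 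The \textbf{main obstacle} is thus pinning down the exact spectral normalization: one needs the smallest nonzero eigenvalue of the relevant Laplacian on $X = \mathbb{Z}\backslash\mathbb{R}$ scaled by $\lambda$ to be precisely $\lambda^{-1}$, so that $B_t$ (as $e^{-\Delta t}$) kills the bottom mode at rate exactly $e^{-\lambda^{-1}t}$; with that fixed, the upper bound is Cauchy--Schwarz plus a geometric series (yielding the $\sqrt{2/(1-e^{-2\lambda^{-1}t})}$ factor from $\sum_{k\ne 0}e^{-2k^2\lambda^{-1}t}\le \frac{2e^{-2\lambda^{-1}t}}{1-e^{-2\lambda^{-1}t}}$ via $k^2\ge k$), and the lower bound is the $L^1\ge$ (projection onto bottom mode) estimate, $\|p_t^\lambda-\pi\|_1 \ge |\widehat{(p_t^\lambda-\pi)}(1)| = e^{-\lambda^{-1}t}$, using that the $L^1$-norm dominates any single Fourier coefficient. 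That last observation — $\|g\|_1 \ge |\widehat g(k)|$ for all $k$ — is what makes the lower bound clean and removes any need to control tails. So the real plan is: (1) expand in Fourier series, (2) lower bound by the absolute value of the first Fourier coefficient, (3) upper bound by $L^2$-norm via Cauchy--Schwarz and sum the geometric-type series, being careful that the eigenvalue normalization gives decay constant exactly matching $\lambda^{-1}$ on the bottom mode and at least $2\lambda^{-1}$ on the next.
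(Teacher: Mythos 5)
Your final plan is exactly the paper's proof: expand $p_t^\lambda-\pi$ in Fourier series, lower-bound the $L^1$-norm by the absolute value of the first Fourier coefficient, and upper-bound it via Cauchy--Schwarz and Parseval, summing $\sum_{m\neq 0}e^{-2\lambda^{-1}tm^2}\le 2\sum_{m\ge 1}e^{-2\lambda^{-1}tm}$ to get the factor $\sqrt{2/(1-e^{-2\lambda^{-1}t})}$. The normalization worry you flagged is real but harmless: the paper simply asserts $\hat f_t^\lambda(m)=\exp(-\lambda^{-1}tm^2)$ as a ``standard computation'' (with the literal Gaussian given, the exponent would carry an extra $2\pi^2$), and since the constant in the decay rate does not affect the no-cutoff conclusion, your argument goes through once that normalization is fixed as you describe.
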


The proposition says that the time until $\n{p_{t}^{\lambda}-\pi}\le e^{-T}$
takes place is $\Theta\left(T\cdot\lambda^{-1}\right)$. Therefore
this random walk does not exhibit a cutoff.
\begin{proof}
Let us calculate the Fourier series of $p_{t}^{\lambda}$:
\begin{align*}
\hat{p}_{t}^{\lambda}(m) & =\intop_{0}^{1}p_{t}^{\lambda}(x)\exp(2\pi imx)dx=\\
 & =\intop_{0}^{1}\sum_{n\in\mathbb{Z}}f_{t}^{\lambda}(x+n)\exp(2\pi imx)dx\\
 & =\intop_{-\infty}^{\infty}f_{t}^{\lambda}\exp\left(2\pi imx\right)dx\\
 & =\hat{f}_{t}^{\lambda}(m),
\end{align*}
where $\hat{f}_{t}^{\lambda}$ is the Fourier transform of $f_{t}^{\lambda}$.
By a standard computation $\hat{f}_{t}^{\lambda}(\omega)=\exp\left(-\lambda^{-1}t\omega^{2}\right)$,
so $\hat{p}_{t}^{\lambda}(m)=\exp\left(-\lambda^{-1}tm^{2}\right)$.

On the one hand, 
\begin{align*}
\n{p_{t}^{\lambda}-\pi} & \ge\intop_{0}^{1}\left(p_{t}^{\lambda}(x)-1\right)exp\left(2\pi x\right)dx=\intop_{0}^{1}p_{t}^{\lambda}(x)exp\left(2\pi x\right)dx\\
 & =\hat{p}_{t}^{\lambda}(1)=\exp\left(-\lambda^{-1}t\right).
\end{align*}
On the other hand, 
\begin{align*}
\nn{p_{t}^{\lambda}-\pi}^{2} & =\sum_{m\in\Z}\left(\hat{p}_{t}^{\lambda}(m)-\hat{\pi}(m)\right)^{2}=\sum_{m\in\mathbb{N}\backslash\{0\}}\left(\hat{p}_{t}^{\epsilon}\right)^{2}(m)\\
 & =\sum_{m\in\Z\backslash\{0\}}\exp\left(-2\lambda^{-1}tm^{2}\right)\\
 & =2\sum_{m=1}^{\infty}\exp\left(-2\lambda^{-1}tm\right)\\
 & \le\frac{2}{1-\exp\left(-2\lambda^{-1}t\right)}\exp\left(-2\lambda^{-1}t\right).
\end{align*}
Cauchy-Schwartz inequality completes the proof by
\[
\n{p_{t}^{\epsilon}-\pi}\le\nn{p_{t}^{\epsilon}-\pi}\le\sqrt{\frac{2}{1-\exp\left(-2\lambda^{-1}t\right)}}\exp\left(-\lambda^{-1}t\right).
\]
\end{proof}
\bibliographystyle{amsplain}
\bibliography{../../database}

\end{document}